\numberwithin{equation}{section}
\newtheorem{theorem}{Theorem}[section]
\newtheorem{proposition}{Proposition}[section]
\newtheorem{lemma}{Lemma}[section]
\theoremstyle{definition}
\newtheorem{remark}{Remark}
\def\XXint#1#2#3{{\setbox0=\hbox{$#1{#2#3}{\int}$}
     \vcenter{\hbox{$#2#3$}}\kern-.5\wd0}}
\def\ve{\varepsilon}
\def\R{{\mathbb R}}
\newcommand{\equ}[1]{(\ref{#1})}
\begin{document}
\title[Concentrations on segments]{Boundary concentrations on segments}

\author[W. Ao]{ Weiwei Ao}
 \address{\noindent W.Ao -Department of Mathematics, University of British
Columbia, Vancouver, B.C., Canada, V6T 1Z2  }
\email{wwao@math.ubc.ca}

\author[H. Chan]{ Hardy Chan}
 \address{\noindent H. Chan -Department of Mathematics, University of British
Columbia, Vancouver, B.C., Canada, V6T 1Z2  }
\email{hardy@math.ubc.ca}

\author[J. Wei]{Juncheng Wei}
\address{\noindent J. Wei -Department of Mathematics, University of British
Columbia, Vancouver, B.C., Canada, V6T 1Z2  }
\email{jcwei@math.ubc.ca}

\date{}\maketitle

\begin{abstract}
We consider the following singularly perturbed Neumann problem
\begin{eqnarray*}
\ve^2 \Delta u -u +u^p = 0 \ , \quad u>0 \quad {\mbox {in}} \quad \Omega, \quad
{\partial u \over \partial \nu}=0 \quad {\mbox {on}} \quad \partial \Omega,
\end{eqnarray*}
where $p>2$ and $\Omega$ is a smooth and bounded domain in $\R^2$. We construct a new class of solutions which consist of large number of spikes concentrating on a {\bf segment} of the boundary which contains a local minimum point of the mean curvature function and has the same mean curvature at the end points. We find a continuum limit of ODE systems governing the interactions of spikes and  show that the mean curvature function acts as {\em friction force}.

\end{abstract}

\setcounter{equation}{0}
\section{Introduction and statement of main results}\label{sec1}
\subsection{Introduction and Main Results}
In this paper, we establish {\bf new} concentration phenomena for the following singularly perturbed elliptic problem
\begin{equation}
\label{p}
\left\{\begin{array}{l}
-\ve^2 \Delta u +u -u^p = 0 \quad {\mbox {in}} \quad \Omega,\\
 \quad u>0 \quad {\mbox {in}} \quad \Omega,\\
{\partial u \over \partial \nu}=0 \quad {\mbox {on}} \quad \partial \Omega,
\end{array}
\right.
\end{equation}
where $\Omega$ is a smooth bounded domain in $\R^2$ with its unit outer normal $\nu$, and the exponent $p$ is greater than $2$, and $\ve>0$ is a small parameter. We prove the existence of solutions concentrating on segment of $\partial \Omega$.
\medskip

This equation is known as the stationary equation of the nonlinear Schr\"odinger equation:
\begin{equation}\label{schrodinger}
i\tilde{h}\frac{\partial \psi}{\partial t}=-\frac{\tilde{h}^2}{2m}\Delta \psi+V\psi-\tilde{\gamma}|\psi|^{p-2}\psi
\end{equation}
where $\tilde{h}$ is the Plank constant, $V$ is the potential and $\tilde{\gamma},m$ are positive constants. Then standing waves of (\ref{schrodinger}) can be found setting $\psi=e^{iEt/\tilde{h}}v(x)$ where $E$ is a constant and the real function $v$ satisfies the elliptic equation:
\begin{equation}\label{schrodinger1}
-\tilde{h}^2\Delta v+\tilde{V}v=|v|^{p-2}v
\end{equation}
for some modified potential $\tilde{V}$. If we consider $\tilde{h}\to 0$, the above equation becomes a singularly perturbed one. It can also be viewed as a stationary equation of Keller-Segel system in chemotaxis and the Gierer-Meinhardt biological pattern formation system \cite{ks}, \cite{gm}.

\medskip

Even though simple-looking, problem (\ref{p}) has a rich and interesting structure of solutions. For the last fifteen years, it has received considerable attention. In particular,  various concentration phenomena exhibited by the solutions of (\ref{p}) seem both mathematically intriguing and scientifically interesting.  We refer to three survey articles \cite{ni}, \cite{nisurvey} and \cite{weisurvey} for backgrounds and references.

\medskip

In the pioneering  papers \cite{nt1, nt2}, Ni and Takagi  proved the existence of least energy solutions to (\ref{p}), that is,  a solution $u_\epsilon$ with minimal energy. Furthermore, they showed in  \cite{nt1, nt2} that, {\it for each $\epsilon >0$ sufficiently small, $u_\epsilon$ has a spike at the most curved part of the boundary, i.e., the region where the mean curvature attains maximum value}.

\medskip

Since the publication of \cite{nt2}, problem (\ref{p}) has received  a great deal of attention and significant progress has been made. For spike solutions, solutions with multiple boundary spikes  as well as  multiple interior spikes have been established. (See \cite{amw,amw1}, \cite{bf}, \cite{bds}, \cite{dfw1}-\cite{dy}, \cite{gw1}-\cite{gpw}, \cite{li}-\cite{ln}, \cite{nw}, \cite{wboundary}, \cite{ww1}-\cite{ww2} and the references therein.) In particular, it was established in Gui and Wei \cite{gw2} that {\it for any two given integers $k\geq 0, l\geq 0$ and $ k+l >0$, problem (\ref{p}) has a solution with exactly $k$ interior spikes and $l$ boundary spikes} for every $\epsilon$ sufficiently small. Furthermore, Lin, Ni and Wei \cite{lnw} showed that there are at least $ \frac{C_N}{ (\epsilon |\log \epsilon|)^N}$ number of interior spikes.  The first author and the third author and Zeng \cite{awz} obtained the optimal bound of number of interior spikes  $\frac{C_N}{\ve^N}$ for general smooth domain in $\R^N$.

\medskip

A general principle is that for interior spike solutions, the distance function from the boundary $\partial \Omega$ plays an important role, while for the boundary spike solutions, the mean curvature function of the boundary plays an important role.

\medskip

It seems natural to ask if problem (\ref{p}) has solutions which ``concentrate'' on higher dimensional sets, e.g. curves, or surfaces.  In this regard, we  mention that it has been {\it conjectured} for a long time that {\it problem (\ref{p}) actually possesses solutions which have $m-$dimensional concentration sets for every $0 \leq m \leq N-1$}. (See e.g. \cite{ni}.)  Progress in this direction, although still limited, has also been made in \cite{amn, fm, fm1, m, mm1, mm2, mm3, mnw, wy,wy1}. For solutions concentration on interior higher dimensional sets, we mention the results in \cite{wy,wy1} where the third author and Yang constructed solutions concentrating on line segment in the interior of the domain $\Omega$. For boundary concentration solutions, we mentioned the results of Malchiodi and Montenegro \cite{mm1}-\cite{mm3} on the existence of solutions concentrating on the {\em whole boundary} or arbitrary components of $\partial \Omega$ when $\Omega\subset  \R^N$, and solutions concentrating on closed geodesics of $\partial \Omega$ when $\Omega\subset \R^3$ and also the results of Mahmoudi and Malchiodi \cite{fm} on the existence of solutions concentrating on the $k$ submanifold of $\partial \Omega\in \R^N$ provided that the sequence $\epsilon$ satisfies some gap condition. The latter condition is called {\em resonance}.

\medskip

In all the above mentioned papers, for higher dimensional boundary concentration solutions, the concentration sets are either the whole boundary or submanifold of the boundary. A natural question is:

\medskip

{\em Does problem (\ref{p}) have solutions  concentrating on a  segment of the boundary?}

\medskip

 In this paper, we give an affirmative answer to the above question. We construct solutions concentrating on an open segment $\gamma $ of the boundary $\partial \Omega\subset \R^2$, provided that  $\gamma$ satisfies the following condition:

\medskip

\noindent  $(H_1).$  Let $\gamma=\gamma([0,b])$ be a segment of the boundary $\partial \Omega$,  parametrized by arc length, and  $H(P)$  be the curvature function  of $\partial \Omega$ at $P$. Denote by
\begin{equation*}
H'(\gamma(s))= \frac{d}{ds}H(\gamma(s)), \ H''(\gamma(s))= \frac{d^2}{ds^2}H(\gamma(s)).
\end{equation*}
Assume that $H''(\gamma(s))\geq c_0>0$ for all $s\in [0,b]$, and $\int_0^b H^{'} (\gamma (s)) ds=0$.

\medskip

\begin{remark}
From assumption $(H_1)$, one can see that $\gamma$ must contain a non-degenerate local minimum point of the curvature $H$. The other condition  $\int_0^b H'(\gamma(s))ds=0$ is equivalent to $H(\gamma(0))=H(\gamma(b))$, i.e. the curvature at the two end points of $\gamma$ must be the same.
\end{remark}

Our main result in this paper can be stated  as follows:
\begin{theorem}\label{teo1}
Assume that $\gamma$ satisfies $(H_1)$, then there exists $\ve_0>0$ such that for $\ve<\ve_0$, there exists boundary spike solution to (\ref{p}) concentrating on $\gamma$.
\end{theorem}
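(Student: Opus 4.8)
The plan is to construct the solution by the Lyapunov--Schmidt reduction method, using a large number $N \sim L/\ve$ of boundary spikes placed along $\gamma$. First I would set up the approximate solution: let $w$ be the unique radial ground state of $-\Delta w + w - w^p = 0$ in $\R^2$, and for points $P_1, \dots, P_N$ on $\gamma$ (parametrized by arc-length positions $0 < t_1 < \dots < t_N < b$ with consecutive spacing comparable to $\ve\log(1/\ve)$ so that neighboring bumps interact at order $e^{-|t_{i+1}-t_i|/\ve}$), define $W = \sum_{i=1}^N w_{\ve, P_i}$, where each $w_{\ve, P_i}$ is the ground state centered at $P_i$, rescaled by $\ve$, and corrected near the boundary so that the Neumann condition is satisfied to high order (using the standard boundary-straightening / reflection near each $P_i$). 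After rescaling $x \mapsto \ve x$, I would write $u = W + \phi$ and project the equation onto the orthogonal complement of the approximate kernel spanned by the tangential derivatives $\partial_{t_i} w_{\ve, P_i}$.

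The core of the argument is then in two parts. First, the \emph{nonlinear solvability step}: for each admissible configuration $\mathbf{t} = (t_1,\dots,t_N)$, solve the projected equation $\mathcal{S}(W+\phi) = \sum_i c_i \partial_{t_i} w_{\ve,P_i}$ for $\phi \perp \{\partial_{t_i} w_{\ve,P_i}\}$ with $\|\phi\|_* \le C \ve^{1+\sigma} N$ in a suitable weighted norm that controls the interaction tails; this requires the invertibility of the linearized operator $L = -\ve^2\Delta + 1 - p W^{p-1}$ on the orthogonal complement, uniformly in $\ve$ and in the configuration, which follows from the non-degeneracy of $w$ together with the spectral gap coming from the fact that there are $N$ spikes but they are well-separated. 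Second, the \emph{reduced energy / finite-dimensional step}: one computes the reduced energy $\mathcal{E}_\ve(\mathbf{t}) := J_\ve(W + \phi(\mathbf{t}))$ and shows that, up to the leading constant,
\begin{equation*}
\mathcal{E}_\ve(\mathbf{t}) = N c_1 \ve^2 - c_2 \ve^3 \sum_{i=1}^N H(\gamma(t_i)) + c_3 \ve^2 \sum_{i=1}^{N-1} e^{-(t_{i+1}-t_i)/\ve} + \text{l.o.t.},
\end{equation*}
where $c_1, c_2, c_3 > 0$. A critical point of $\mathcal{E}_\ve$ in $\mathbf{t}$ then yields, via the usual argument, genuine constants $c_i = 0$ and hence a true solution. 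The heuristic is that the critical-point equations $\partial_{t_i}\mathcal{E}_\ve = 0$ are a discrete ODE system: the curvature term contributes $-c_2\ve^3 H'(\gamma(t_i))$ (a ``friction force'' proportional to the gradient of $H$) while the interaction terms contribute the discrete Laplacian of the exponential spacings; passing to the continuum limit $N\to\infty$, $\ve\to 0$ with $N\ve$ fixed, this becomes a second-order ODE for the limiting spike density whose solvability is governed exactly by the Neumann-type conditions encoded in $(H_1)$: the sign condition $H'' \ge c_0 > 0$ gives coercivity/convexity of the relevant functional near the local minimum of $H$, and $\int_0^b H'(\gamma(s))\,ds = 0$, i.e.\ $H(\gamma(0)) = H(\gamma(b))$, is precisely the compatibility (no-flux at the endpoints) condition that allows the spike chain to terminate at $\gamma(0)$ and $\gamma(b)$ without a force imbalance.

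The main obstacle, I expect, is \textbf{the finite-dimensional reduction in the presence of $N \sim 1/\ve$ spikes}: one must show the linearized operator is invertible \emph{with a bound independent of $N$}, and that the error estimates on $\phi$ and on the reduced energy are uniform in the number of spikes. This forces the use of carefully chosen weighted norms (with weights like $\sum_i e^{-\eta|x - P_i/\ve|}$) so that the interaction sums converge and the contraction mapping closes; keeping track of how the errors accumulate over $N$ terms, and ensuring that the discrete system's solution (obtained e.g.\ by a degree-theoretic or direct minimization argument on the energy, using $(H_1)$ to locate a critical point in the interior of the admissible configuration space and to prevent spikes from colliding or escaping the ends of $\gamma$) stays in the regime where all estimates are valid, is the delicate part. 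A secondary difficulty is constructing the approximate solution near the two endpoints $\gamma(0)$, $\gamma(b)$ of the open segment, where there is no neighboring spike on one side, so the ``boundary'' of the spike chain must be handled by an ad hoc local analysis — this is exactly where the condition $H(\gamma(0)) = H(\gamma(b))$ becomes essential.
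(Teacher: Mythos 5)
Your overall strategy (Lyapunov--Schmidt with $O(1/(\ve|\ln\ve|))$ spikes, weighted norms uniform in the number of spikes, a reduced finite-dimensional system that in the continuum limit becomes an ODE governed by $(H_1)$) matches the paper's skeleton, but there are two places where your plan, as written, would not close. First, the accuracy of the reduction is insufficient. The curvature enters the reduced equations only at order $\ve^2 H'(\gamma(s_i))$, and what must be balanced are \emph{differences} of neighboring interactions, which are even smaller; hence every error in the projected equations must be $O(\ve^3)$, uniformly in $i$. Your claimed bound $\|\phi\|_*\le C\ve^{1+\sigma}N$ (and an energy expansion with unquantified ``l.o.t.'') is far too weak: with it the remainder swamps the term $\ve^2 H'$ entirely. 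The paper needs $\|\psi\|_*\le C\ve$ independent of $k$ \emph{and}, beyond that, a further decomposition $\psi=\sum_i\chi_\ve\phi_i+\ve^2\psi_1$ (Section \ref{sec5}) exploiting even/odd symmetry in the tangential variable and the constraints built into the configuration space $\Lambda_k$ to show that the $\psi$-contribution to each projection is genuinely $O(\ve^3)$. Without an analogue of this step your reduced system is not accurate enough to see the friction term at all. (Minor related slips: the interaction term in the Neumann boundary-spike energy enters with a negative sign, and the number of spikes is $\sim b/(\ve|\ln\ve|)$, not $\sim L/\ve$.)

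Second, and more seriously, your method for the finite-dimensional step would fail. You propose to find a critical point of the reduced energy by direct minimization or a degree argument, reading $H''\ge c_0>0$ as ``convexity/coercivity.'' But the linearization of the reduced system is degenerate at main order (the paper states this explicitly at the start of Section \ref{sec7}): the interaction part is a discrete second-difference operator with exponentially small, spacing-dependent coefficients, and the curvature term only acts through accumulated sums $\sum_{j\le i}\ve^2H'(\gamma(s_j))$. No soft variational or topological argument in a $k$-dimensional configuration space with $k\to\infty$ will locate the critical point with the required $O(\ve^3)$ precision. The paper instead sums the equations to obtain $\Psi_1\bigl(\frac{s_{i+1}-s_i}{\ve}\bigr)=-\sum_{j\le i}\ve^2H'(\gamma(s_j))+O(i\ve^3)$ together with the global constraint $\sum_{i=1}^k\ve^2H'(\gamma(s_i))=O(k\ve^3)$, interprets this as a midpoint discretization of the overdetermined ODE system \equ{ode1} (first order in $(x,\rho)$ with three boundary conditions), and here is exactly where $(H_1)$ enters: $H(\gamma(0))=H(\gamma(b))$ is the solvability condition for the extra endpoint condition (a global force balance, not merely an endpoint-local ``no-flux'' fix), while $H''\ge c_0$ is the nondegeneracy allowing one to solve for the endpoint datum $\rho_b$ and later for $y_1$ by a contraction. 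One then still has to correct the discretization errors: the midpoint choice gives $O(h^2)$ quadrature errors, and a careful fixed-point argument (splitting indices $i\le(1-\delta)k$ and $i>(1-\delta)k$) yields $\|y\|_\infty\le C\ve\ln(-\ln\ve)\ll\ve|\ln\ve|$. Your proposal gestures at the continuum ODE as a heuristic but leaves precisely this mechanism --- the overdetermined ODE, the role of each hypothesis in $(H_1)$, and the quantitative control of the discretization --- unaddressed, and that is where the actual proof lives.
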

\begin{remark}
In  \cite{gww}, Gui, Wei and Winter  proved the existence of multiple spike solutions concentrating at the local minimum point of the curvature function $H(P)$. In this paper, we have proved the existence of spike solutions concentrating on the segment which contains a local minimum of $H(P)$. Theorem {\ref{teo1}} implies that we can extend their result to a segment containing a local minimum point of $H$.
\end{remark}

\medskip

\subsection{Description of the construction}\label{sec2}
The solution we construct consists of large number ($O(\frac{1}{\ve\ln\ve})$) of spikes distributed along the segment $\gamma$ whose inter distance is sufficiently small ($O(\ve \ln\ve)$). At the first glance one may discard such kind of solutions as there seems to be no balancing force at the end points of the segment.  In the following we show that the derivative of the mean curvature function acts as {\em frictional force}. This new phenomena was first discovered in the construction of  CMC surfaces by Butscher and Mazzeo \cite{bm}. We will comment more on this in later section.

 In this subsection, we will briefly describe the solutions to be constructed later and will give the main idea in the procedure of the construction.

To be more specific, let $w$ be the unique solution of the following equation:
\begin{equation}
\left\{\begin{array}{l}
\Delta w-w+w^p=0 \mbox{ in }\R^2,\\
w>0, \ w(0)=\max_{y\in\R^2}w(y),\\
w\to 0 \mbox{ as }|y|\to \infty.
\end{array}
\right.
\end{equation}

It is well-known (see \cite{k}) that $w$ is radial, i.e., $w=w(r)$ and $w'(r)<0$ and has the following asymptotic behaviour:
\begin{equation}
w(y)=c_{N,p}|y|^{-\frac{N-1}{2}}e^{-|y|}(1+o(1))
\end{equation}
and
\begin{equation}
 w'(y)=-(1+o(1))w(y) \mbox{ as } |y|\to \infty.
\end{equation}

\medskip

For $P\in\partial \Omega$, we set
\begin{equation*}
\Omega_\ve=\{z: \ve z\in\Omega\}, \ \Omega_{\ve, P}=\{z: \ve z+P\in\Omega\},
\end{equation*}
and
\begin{equation*}
{\mathcal P} w_{P}(z)={\mathcal P}_{\Omega_{\ve,P}}w(z-\frac{P}{\ve}),  w_P(z)=w(z-\frac{P}{\ve}), \ z\in \Omega_{\ve}
\end{equation*}
where ${\mathcal P}_{\Omega_{\ve, P}}w(z-\frac{P}{\ve})$  is defined to be the unique  solution of
\begin{equation}
\Delta u-u+(w(\cdot -\frac{P}{\ve}))^p=0 \mbox{ in }\ \Omega_{\ve, P},\ \ \
\frac{\partial u}{\partial \nu}=0 \mbox{ on }\partial \Omega_{\ve, P}.
\end{equation}

\medskip

We will put large number of boundary spikes along $\gamma$. Let the location of spikes  be $(\gamma(s_1),\cdots, \gamma(s_k))$.  We define
\begin{equation*}
U=\sum_{i=1}^k {\mathcal P}_{\Omega_{\ve, \frac{\gamma(s_i)}{\ve}}}w(z-\frac{\gamma(s_i)}{\ve})
\end{equation*}
to be a approximate solution. A natural and central question is how to choose $s_i$ such that $U$ is indeed a good approximation. By formal calculation, one has the following energy expansion for the energy functional:
\begin{eqnarray*}
J(U)=\frac{k}{2}I(w)-\ve\gamma_0\sum_{i=1}^kH(\gamma(s_i))-\frac{\gamma_1}{2}w(\frac{\gamma(s_i)-\gamma(s_j)}{\ve})+o(\ve)
\end{eqnarray*}
where $\gamma_0,\gamma_1$ are positive constants.  One needs to find a critical point $(s_1,\cdots,s_k) $ of $J$ in order to get a solution of (\ref{p}), i.e. $\frac{\partial }{\partial s_i}J=0$ for all $i$. The main point in this paper is to exploit the contribution of $H'(\gamma(s))$ in $\frac{\partial J}{\partial s_i}$. The novelty of this paper is the new method of constructing balance approximate spike solutions, i.e. the configuration space $\{(s_1,\cdots,s_k)\}$, such that  $\frac{\partial J}{\partial s_i}$ is almost $0$.

\medskip

  It turns out that the number of spikes and their positions are determined by some nonlinear equations which involves the interaction of spikes and also the effect of the boundary curvature. To explain this, we need to introduce the interaction function $\Psi(s)$ which is defined for all $s\in \R$ by
\begin{equation*}
\Psi(s)=-\int_{\R^2_+}w(y-(s,0))pw^{p-1}\frac{\partial w}{\partial y_1}dy.
\end{equation*}

It turns out that $\gamma(s_i)$ are determined by the following non-linear system:
\begin{equation}\label{position}
\left\{\begin{array}{l}
\Psi(\frac{|s_2-s_1|}{\ve})+\ve^2H'(\gamma(s_1))=O(\ve^3),\\
\Psi(\frac{|s_3-s_2|}{\ve})-\Psi(\frac{|s_2-s_1|}{\ve})+\ve^2H'(\gamma(s_2))=O(\ve^3),\\
\cdots\\
\Psi(\frac{|s_k-s_{k-1}|}{\ve})-\Psi(\frac{|s_{k-1}-s_{k-2}|}{\ve})+\ve^2H'(\gamma(s_{k-1}))=O(\ve^3),\\
-\Psi(\frac{|s_k-s_{k-1}|}{\ve})+\ve^2H'(\gamma(s_k))=O(\ve^3),
\end{array}
\right.
\end{equation}
and the number of spikes depending on $\ve$ is given by $k=k_\ve=[\frac{b}{|\ve\ln\ve|}]+1$.

\medskip

In general, the above nonlinear system is difficult to solve.  Our new idea is to consider  this  non-linear system as a discretization of its continuum limiting  ODE system (as the step size tends to $0$):
\begin{equation}\label{ode1}
\left\{\begin{array}{l}
\frac{d x}{dt}=-\frac{1}{\ln\ve}\Psi^{-1}(\frac{\ve}{\ln\ve}\rho(t)),\\
\frac{d\rho}{dt}=H'(\gamma(x(t))),  \ 0<t<b_\ve,\\
\rho(0)=0, \ \rho(b_\ve)=\rho_b,\\
x'(b_\ve)=-\frac{1}{\ln\ve}\Psi^{-1}(\ve^2H'(\gamma(x(b_\ve))))
\end{array}
\right.
\end{equation}
where $\Psi^{-1} $ is the inverse function of $\Psi$, and $b_\ve=(k_\ve-1) h=b+O(h)$ and $\rho_b<0$ is a small constant depending on $\ve$. The above overdetermined ODE is solvable under the assumption of the segment $\gamma $ in $(H_1)$.

\medskip

To describe the configuration space of $\gamma(s_i)$, we solve the ODE system (\ref{ode1}) first and denote the solution as $ x(t)$. Then we define
\begin{equation}
s_i^0=x(\frac{t_i+t_{i+1}}{2}) \mbox{ for }i=1,\cdots,k-1
\end{equation}
and
\begin{equation}
s_k^0=s_{k-1}^0+\ve\Psi^{-1}(\ve^2H'(\frac{\ve}{\ln\ve}\rho_b))
\end{equation}
where
\begin{equation}
t_i=(i-1)|\ve\ln\ve|, \ i\geq 1
\end{equation}

Letting $y_i\in\R$, we define
\begin{equation}\label{points}
s_i=s_i^0+y_i, \mbox{ for } i=1,\cdots,k,
\end{equation}
and $y_i$ satisfies
\begin{equation}\label{par}
\left\{\begin{array}{l}
|y_1|\leq C|\ve\ln(-\ln\ve)|, \\
|(s_{i+1}-s_i)-(s_{i}-s_{i-1})|\leq \frac{C\ve^3}{\min \{\Psi(\frac{s_i^0-s^0_{i-1}}{\ve}),\Psi(\frac{s^0_i-s^0_{i+1}}{\ve})\}}
\end{array}
\right.
\end{equation}
for $i=2,\cdots,k-1$ for some constant $C>0$ large.

\medskip

With these notations, we can define the configuration space of $(s_1,\cdots,s_k)$ by
\begin{equation}\label{points1}
\Lambda_k=\{(s_1,\cdots,s_k)\in\R^k | s_i \mbox{ is \ defined \ by (\ref{points}) and \ satisfies \ (\ref{par})}\}
\end{equation}

\medskip

Moreover, from the analysis of the ODE (\ref{ode1}) in Section \ref{sec7},  one can get that
\begin{equation}\label{equation101}
|s_i-s_{i-1}|\geq (1+o(1))|\ve\ln\ve|, \ w(\frac{s_i-s_{i-1}}{\ve})\leq \frac{c\ve}{|\ln\ve|}
\end{equation}
for $i=2,\cdots,k$ and
\begin{equation}
|s_i-s_{i-1}|=2(1+o(1))|\ve\ln\ve||
\end{equation}
for $i=1,k$.

We will prove Theorem \ref{teo1} by showing the following result:

\medskip

\begin{theorem}
\label{teo}
 Let $\gamma$ be a segment of $\partial \Omega$ satisfying the assumption $(H_1)$. Then there exists $\ve_0$ such that for $\ve<\ve_0$, there exists positive number $k=k_{\ve,\gamma}=[\frac{b}{|\ve\ln\ve|}]$ and $k$ points $(\gamma(s_1),\cdots,\gamma(s_k))$ on $\gamma$, where $(s_1,\cdots,s_k)\in \Lambda_k$ such that there exists a solution $u_\ve$ to Problem \equ{p} and $u_\ve$ has the following form:

\begin{equation}
\label{propsol}
u_\ve (x ) =  \sum_{i=1}^k {\mathcal P}_{\Omega_{\ve, \frac{\gamma(s_i)}{\ve}} }w(\frac{x-\gamma(s_i)}{\ve})+ o(1)
\end{equation}
where $o(1) \to 0$ as $\ve \to  0$ uniformly.
\end{theorem}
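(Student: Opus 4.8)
The plan is a finite-dimensional Lyapunov--Schmidt reduction whose novelty is that the number of reduction parameters $k=k_\ve$ diverges as $\ve\to0$, so every estimate must be made uniform in $k$, and the resulting large nonlinear system \eqref{position} is solved by recognizing it as a discretization of the continuum ODE \eqref{ode1}. First I would rescale $x=\ve z$, turning \eqref{p} into $\Delta u-u+u^p=0$ in $\Omega_\ve$ with homogeneous Neumann data, and for each configuration $(s_1,\dots,s_k)\in\Lambda_k$ take as approximate solution $U=U_s=\sum_{i=1}^k\mathcal{P}_{\Omega_{\ve,\gamma(s_i)/\ve}}w(\cdot-\gamma(s_i)/\ve)$. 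Using the exponential decay of $w$ and $w'$, the exponential smallness of the projection correctors $\mathcal{P}w-w$, and the spike separation $|s_i-s_{i-1}|\ge(1+o(1))|\ve\ln\ve|$ together with $w((s_i-s_{i-1})/\ve)\le c\ve/|\ln\ve|$ from \eqref{equation101}, one estimates the error $S(U):=\Delta U-U+U^p$ in a suitable weighted $L^\infty$ norm and finds $\|S(U)\|_*=O(\ve/|\ln\ve|)$, the leading contributions coming precisely from the pairwise spike interactions and the boundary-curvature correction.

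Next I would develop the linear theory. Writing $u=U+\phi$, let $Z_i=\partial U/\partial s_i$ be the approximate kernel elements (tangential derivatives along $\partial\Omega$), set $\mathcal{H}=\{\phi:\langle\phi,Z_i\rangle=0,\ i=1,\dots,k\}$, and consider $L\phi=\Delta\phi-\phi+pU^{p-1}\phi$ followed by the $L^2$-projection onto $\mathcal{H}$. The crucial point is a uniform invertibility estimate: there are $C>0$ and a fixed power $\sigma\ge0$, independent of $\ve$ and of $k$, with $\|\phi\|_{**}\le C|\ln\ve|^{\sigma}\|L\phi\|_{*}$ for all $\phi\in\mathcal{H}$. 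I would prove this by contradiction and localization: if it fails there is a sequence $\phi_n$ with $\|\phi_n\|_{**}=1$ and $L\phi_n\to0$; testing against suitable cut-offs near each spike and using the nondegeneracy of the single-spike (half-space, Neumann) linearized operator, whose kernel is exactly $\mathrm{span}\{\partial w/\partial y_1\}$ modulo the constrained direction, forces the local pieces to vanish, while the smallness of the interactions controls the cross terms in the sum, giving a contradiction.

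With the linear theory in hand, for each $s\in\Lambda_k$ a standard contraction-mapping argument solves the projected nonlinear problem
\[
\Delta(U+\phi)-(U+\phi)+(U+\phi)^p=\sum_{i=1}^k c_i Z_i,\qquad \phi\in\mathcal{H},
\]
yielding $(\phi_s,c_s)$ with $\|\phi_s\|_{**}=O(\ve|\ln\ve|^{-1+\sigma})$ and Lipschitz (indeed $C^1$) dependence on $s$, with controlled derivatives. It then remains to choose $s\in\Lambda_k$ so that every multiplier $c_i$ vanishes. Expanding $\langle S(U+\phi_s),Z_i\rangle$, using the energy expansion of Section \ref{sec2}, the precise asymptotics of $\Psi$, and absorbing the $\phi_s$-contribution and all higher-order terms into errors of size $O(\ve^3)$, one shows that $c_s=0$ is equivalent to the nonlinear system \eqref{position}.

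Finally I would solve \eqref{position}. Telescoping its equations and introducing a suitably normalized discrete momentum $\rho_i$ (so that $\tfrac{\ve}{\ln\ve}\rho_i$ records the cumulative curvature contribution $-\sum_{j<i}\ve^2H'(\gamma(s_j))=\Psi((s_i-s_{i-1})/\ve)+O(\ve^3)$, hence $\rho_i\approx\rho(t_i)$), one recognizes \eqref{position} as the difference scheme, with step $h=|\ve\ln\ve|$, for the overdetermined ODE system \eqref{ode1}; the latter is solvable precisely because of $(H_1)$, since $H''\ge c_0>0$ supplies the restoring ``frictional'' term and $\int_0^b H'\,ds=0$ (i.e. $H(\gamma(0))=H(\gamma(b))$) is the compatibility condition that lets the orbit close with $\rho(0)=0$, $\rho(b_\ve)=\rho_b$. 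Taking the solution $x(t)$ of \eqref{ode1} from Section \ref{sec7}, defining $s_i^0$ from it and writing $s_i=s_i^0+y_i$ as in \eqref{points}, the reduced map $y\mapsto$ (Newton-type update for \eqref{position}) is shown to send the compact convex set $\Lambda_k$ into itself, using the monotonicity and quantitative bounds on $\Psi^{-1}$; Brouwer's fixed point theorem then produces $s\in\Lambda_k$ with $c_s=0$, and undoing the rescaling gives a solution of the form \eqref{propsol}. \emph{Main obstacle:} the two genuinely hard steps are the $k$-uniform invertibility of the second paragraph (the approximate kernel has dimension $k_\ve\to\infty$, so one must prevent the accumulated spike interactions from destroying coercivity) and the passage between the discrete system \eqref{position} and the continuum ODE \eqref{ode1}, where the balancing terms $\ve^2H'$ are of strictly lower order than the leading interaction $\Psi\sim\ve/|\ln\ve|$, forcing the $O(\ve^3)$ precision and a delicate use of $\Psi^{-1}$ to make the fixed-point map a self-map of $\Lambda_k$.
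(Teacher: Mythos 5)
Your overall strategy is the same as the paper's (Lyapunov--Schmidt with a diverging number $k_\ve$ of boundary spikes, uniform linear theory proved by contradiction, and the reduced system solved as a discretization of the ODE \equ{ode1} under $(H_1)$), but there is a genuine gap at the step where you pass from the solved projected problem to the reduced system \equ{position}. You claim that ``the $\phi_s$-contribution and all higher-order terms'' can be absorbed into errors of size $O(\ve^3)$, but this does not follow from the contraction-mapping bound $\|\phi_s\|_*=O(\ve)$ (nor from your stronger, and in fact incorrect, claim $O(\ve|\ln\ve|^{-1+\sigma})$: the boundary correction $\ve v^{(1)}$ already forces the error $\|S(U)\|_*$ to be of size $\ve$, not $\ve/|\ln\ve|$, and hence $\|\phi_s\|_*\sim\ve$). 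Generic estimates only give $\langle N(\phi_s),Z_i\rangle=O(\|\phi_s\|_*^2)=O(\ve^2)$ and $\langle L_\ve\phi_s,Z_i\rangle=O(\ve\|\phi_s\|_*)=O(\ve^2)$, which is exactly the size of the curvature term $\ve^2H'(\gamma(s_i))$ that drives the whole construction; worse, after telescoping the $k\sim 1/(\ve|\ln\ve|)$ equations as in \equ{reducedproblem}, per-equation errors of size $O(\ve^2)$ accumulate to $O(\ve/|\ln\ve|)$, the same order as the leading interaction $\Psi$ itself. So with only the estimates stated in your second and third paragraphs the reduced equations cannot be identified with \equ{position} at the required accuracy, and the subsequent ODE/discretization argument has nothing to act on.

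This is precisely why the paper devotes Section \ref{sec5} to a further decomposition $\psi=\sum_i\chi_\ve(z-P_i)\phi_i+\ve^2\psi_1$ with $\|\psi_1\|_*\le C$: each local profile $\phi_i$ is split into a component even in $y^i_1$ plus an odd remainder whose size is $O(\ve^2)$, and this smallness comes from Lemma \ref{even} together with the near-equal-spacing constraint \equ{par} built into the configuration space $\Lambda_k$ --- a constraint your outline never actually uses, although it is the reason $\Lambda_k$ is defined the way it is. Combining this parity structure with $Z_i=\partial_{y_1}w+\ve\eta_i+O(\ve^2)$, $\eta_i$ odd (Proposition \ref{pro2}), the leading projections cancel and the $\psi$-contributions drop to $O(\ve^3)$, which is what makes Lemma \ref{lemma601} true. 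Your proof would need to reproduce this even/odd bookkeeping (or provide a substitute cancellation mechanism); as written, the reduction step fails. The remaining discrepancies are minor: the paper solves the discrete system not by Brouwer on $\Lambda_k$ but by expressing $y_2,\dots,y_k$ in terms of $y_1$ from the first $k-1$ equations and closing with the $k$-th, and it needs the midpoint choice $s_i^0=x\bigl(\tfrac{t_i+t_{i+1}}{2}\bigr)$ so that the Riemann-sum error is $O(h^2)$ --- a quantitative point your fixed-point sketch would also have to incorporate to keep the iterates inside $\Lambda_k$.
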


\begin{remark}
The motivation of our construction comes from the study of the constant mean curvature surface. In \cite{bm}, Butscher and Mazzeo constructed CMC surface condensing to a geodesic segments by connecting large number ($O(\frac{1}{r})$) of spheres of radius $r$ distributing along the geodesic segment. In their paper, they require the symmetry condition on the geodesic segment.  In our main theorem \ref{teo1}, if we further require that $\Omega$ is symmetric, it is easy to see that $(H_1)$ can always be satisfied near the non-degenerate minimum point of the curvature $H(\gamma(s))$. Since Theorem \ref{teo1} can deal with more general segment, we believe that our idea can be used to construct CMC surface condensing to geodesic segments without the symmetry condition.  We will discuss this in a forthcoming paper. (A. Butscher announced this result in the preprint \cite{bu} but the full details have never appeared.)

\end{remark}

\subsection{Sketch of the proof of Theorem \ref{teo}}

We introduce some notations first. Since after scaling $x=\ve z$, the original problem is
\begin{equation}
\left\{\begin{array}{l}
\Delta u-u+u^p=0  \mbox{ in }\Omega_\ve\\
u>0 \mbox{ in }\Omega\\
\frac{\partial u}{\partial \nu}=0 \mbox{ on }\partial \Omega_\ve
\end{array}
\right.
\end{equation}

Fixing ${\bf s}=(s_1,\cdots,s_k)\in \Lambda_{k}$, we denote by
\begin{equation}
{\bf P}=(P_1,\cdots,P_k)=(\frac{\gamma(s_1)}{\ve},\cdots , \frac{\gamma(s_k)}{\ve})
\end{equation}
and define the sum of $k$ spikes as
\begin{equation}
U=\sum_{i=1}^k {\mathcal P}_{\Omega_{\ve,P_i}}w(z-P_i).
\end{equation}
Define the operator
\begin{equation}
S(u)=\Delta u-u+u^p.
\end{equation}
We also define the following functions as the approximate kernels
\begin{equation}
Z_i=\frac{\partial {\mathcal P}_{\Omega_{\ve,P_i}}w(z-P_i)}{\partial \tau_{P_i}} , \ i=1,\cdots,k.
\end{equation}

Using $U$ as the approximate solution, and performing the Lyapunov-Schmidt reduction, we can show that there exists $\ve_0$ such that for $
\ve<\ve_0$, we can find a $\psi$ of the following projected problem:
\begin{equation}
S(U+\psi)=\sum_{i=1}^k c_iZ_i, \ \int_{\Omega_\ve}\psi Z_i=0, \ i=1,\cdots,k,
\end{equation}
where $c_i$ are constants depending on the form of $\psi,Z_i$.

Next, we need to solve the reduced problem
\begin{equation}
c_i=0, \ i=1,\cdots,k
\end{equation}
by adjusting the points in $\Lambda_k$.

\medskip

There are two main difficulties in solving the reduced problem. First we need to control the error projection caused by $\psi$. In order to control this projection, we need to work in a weighted norm, which estimates $\psi$ locally (see Section \ref{sec5}),  and also we need a further decomposition of $\psi$ which is given in Section \ref{sec5} from where one can see why we define the configuration space of $s_i$ in (\ref{points1}). Second, we need to solve a non-linear system of the form (\ref{position}), for which we use the discretezition of the ODE equation (\ref{ode1}).

\medskip

Finally, the paper is organized as follows. Some preliminary facts and useful estimates are explained in Section \ref{sec3}. Section \ref{sec4} contains the standard Liapunov-Schmidt reduction process: we study the linearized projected problem in ref{sec4.1} first and then we solve a non-linear projected problem in \ref{sec4.2}. In Section \ref{sec5} we obtain a further asymptotic behavior of $\psi$  in $\ve$.  In Section \ref{sec6}, we derive  the reduced nonlinear system of algebraic equations for the location. Section \ref{sec7} is devoted to solve the nonlinear system.

\medskip

\noindent {\bf Acknowledgments.} The research of J. Wei is partially supported by NSERC of Canada.

\setcounter{equation}{0}
\section{Technical Analysis }\label{sec3}

In this section, we introduce a projection and derive some useful estimates.
Throughout this paper, we shall use the letter $C$ to denote a generic positive constant which may vary from term to term. By the following rescaling
\begin{equation}
x=\ve z, \ z\in \Omega_\ve:=\{\ve z\in \Omega\},
\end{equation}
and equation (\ref{p}) becomes
\begin{equation}\label{eq1}
\left\{\begin{array}{l}
\Delta u-u+u^p=0, \mbox{ in }\Omega_\ve\\
\frac{\partial u}{\partial \nu}=0 \mbox{ on }\partial \Omega_\ve.
\end{array}
\right.
\end{equation}

We denote by $\R^2_+=\{(y_1,y_2)|y_2>0\}$. Let $w$ be the unique solution of
\begin{equation}\label{w}
\left\{\begin{array}{l}
\Delta w-w+w^p=0 \mbox{ in }\R^2, \\
w>0, \ w(0)=\max_{y\in R^2}w(y), \\
w(y)\to 0 \mbox{ as }|y|\to \infty.
\end{array}
\right.
\end{equation}

Let $p\in \partial \Omega$. We can define a diffeomorphism straightening the boundary. We may assume that the inward normal to $\partial \Omega$ at $p$ is pointing in the direction of the positive $x_2$ axis. Denote $B'({R})=\{|x_1|\leq {R}\}$, and $\Omega_{1}=\Omega\cap B(p,
{R})=\{(x_1,x_2)\in B(P,{R})| x_2-p_2>\rho(x_1-p_1)\}$ where $B(p,
R)=\{x\in R^2| |x-p|<R\}$. Then since $\partial \Omega$ is smooth, we can find a constant $R$ such that $\partial \Omega$ can be represented by the graph of a smooth function $\rho_p:B'(R)\to \R$ where $\rho_p(0)=0$, and $\rho_p'(0)=0$. From now on, we omit the use of $p$ in $\rho_p$ and write $\rho$ instead if this can be done without confusion. So near $p$, $\partial \Omega$ can be represented by $(x_1, \rho(x_1))$. The curvature of $\partial \Omega$ at $p$ is
$H(p)=\rho''(0)$.  After scaling, we know that near $P=\frac{p}{\ve}$, $\partial \Omega_\ve$ can be represented by $(z_1, \ve^{-1}\rho(\ve z_1))$, where $(z_1,z_2)=\ve^{-1}(x_1,x_2)$.
By Taylor's expansion, we have the following:
\begin{equation}
\ve^{-1}\rho(\ve z_1)=\frac{1}{2}\rho''(0)\ve z_1^2+\frac{1}{6}\rho^{(3)}(0)\ve^2 z_1^3+O(\ve^3 z_1^4).
\end{equation}

Recall that for a smooth bounded domain $U$, the projection $P_U$ of $H^2(U)$ onto $\{u\in H^2(U) | \frac{\partial u}{\partial \nu}=0 \mbox{ at  } \partial U \}$ is defined as follows: For $v\in H^2(U)$, let $P_U v$ be the unique solution of the boundary value problem:
\begin{equation}
\left\{\begin{array}{l}
\Delta u-u+v^p=0, \mbox{ in }U,\\
\frac{\partial u}{\partial \nu}=0 \mbox{ on }\partial U.
\end{array}
\right.
\end{equation}

Let $h_P(z)=w(z-P)-P_{\Omega_{\ve,P}}w(z-P)$, then $h_P$ satisfies
\begin{equation}
\left\{\begin{array}{l}
\Delta h_P(z)-h_P(z)=0, \mbox{ in }\Omega_\ve,\\
\frac{\partial h_P}{\partial \nu}=\frac{\partial }{\partial \nu}w(z-P) \mbox{ on }\partial \Omega_\ve.
\end{array}
\right.
\end{equation}
For $z\in \Omega_{1,\ve}$, for $P=({\bf P}_1, {\bf P}_2)$, set now
\begin{equation}\label{y}
\left\{\begin{array}{l}
y_1=z_1-{\bf P}_1,\\
y_2=z_2-{\bf P}_2-\ve^{-1}\rho(\ve (z_1-{\bf P}_1)).
\end{array}
\right.
\end{equation}

Under this transformation, the Laplace operator and the boundary derivative operator become
\begin{eqnarray*}
&&\Delta _z=\Delta_y+\rho(\ve z_1)^2\partial_{y_2y_2}-2\rho'(\ve z_1)\partial_{y_1y_2}-\ve\rho''(\ve z_1)\partial_{y_2},\\
&&(1+\rho'(\ve z_1)^2)^{\frac{1}{2}}\frac{\partial }{\partial \nu}=\rho'(\ve z_1)\partial_{y_1}-(1+{\rho'}^2(\ve z_1))\partial_{y_2}.
\end{eqnarray*}

Let $v^{(1)}$ be the unique solution of
\begin{equation}
\left\{\begin{array}{l}
\Delta v-v=0, \mbox{ in }\R^2_+\\
\frac{\partial v}{\partial y_2}=\frac{w'}{|y|}\frac{\rho''(0)}{2}y_1^2 \mbox{ on }\partial \R^2_+,
\end{array}
\right.
\end{equation}
where $w'$ is the radial derivative of $w$, i.e. $w'=w_r(r)$, and $r=|z-P|$.

 Let $v^{(2)}$ be the unique solution of
\begin{equation*}
\left\{\begin{array}{l}
\Delta v-v-2\rho''(0)y_1\frac{\partial^2 v_1}{\partial_{y_1}\partial_{y_2}}=0  \mbox{ in }\R^2_+,\\
\frac{\partial v}{\partial y_2}=-\rho''(0)y_1\frac{\partial v_1}{\partial y_1} \mbox{ on }\partial \R^2_+.
\end{array}
\right.
\end{equation*}
Let $v^{(3)}$ be the unique solution of
\begin{equation}
\left\{\begin{array}{l}
\Delta v-v=0,\mbox{ in }\R^2_+,\\
\frac{\partial v}{\partial y_2}=\frac{w'}{|y|}\frac{1}{3}\rho^{(3)}(0)y_1^3, \mbox{ on }\partial \R^2_+.
\end{array}
\right.
\end{equation}

Note that $v^{(1)}, v^{(2)}$ are even functions in $y_1$ and $v^{(3)}$ is odd function in $y_1$. Moreover, it is easy to see that $|v_i(y)|\leq Ce^{-\mu|y|}$ for any $0<\mu<1$. Let $\chi(x)$ be a smooth cut-off function, such that $\chi(x)=1$, $x\in B(0,R_0\ve|\ln\ve|)$, and $\chi(x)=0$ for $x\in B(0,2R_0\ve|\ln\ve|)^c$ for $R_0$ large enough, and $\chi_{\ve}(z)=\chi(\ve z)$ for $z\in \Omega_\ve$. In this case, one has $w(R_0|\ln\ve|)=O(\ve^{R_0})$. Set
\begin{equation}
h_P(z)=-(\ve v_1(y)+\ve^2(v_2(y)+v_3(y)))\chi_\ve(z-P)+\ve^3\xi_P(z), \ z\in \Omega_\ve.
\end{equation}
Then we have the following estimate:
\begin{proposition}\label{pro1}
\begin{equation}
\|\xi(z)\|_{H^1(\Omega_\ve)}\leq C.
\end{equation}
\end{proposition}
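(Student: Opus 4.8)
The plan is to view the identity defining $\xi_P$ as an equation for $\xi_P$ and to estimate it by an energy (i.e. $H^1$) argument. Plugging
\[
h_P(z)=-(\ve v_1(y)+\ve^2(v_2(y)+v_3(y)))\chi_\ve(z-P)+\ve^3\xi_P(z)
\]
into the defining system
\[
\Delta h_P-h_P=0 \ \text{in}\ \Omega_\ve,\qquad \frac{\partial h_P}{\partial\nu}=\frac{\partial}{\partial\nu}w(z-P)\ \text{on}\ \partial\Omega_\ve,
\]
and using the change of variables \equ{y} together with the expansions of $\Delta_z$ and of the boundary operator in terms of $\partial_{y_i}$, one obtains for $\ve^3\xi_P$ a problem of the form $\Delta\xi_P-\xi_P=F_\ve$ in $\Omega_\ve$, $\partial\xi_P/\partial\nu=G_\ve$ on $\partial\Omega_\ve$, where $F_\ve$ and $G_\ve$ collect all the remainder terms. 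First I would carry out this bookkeeping carefully: the equations satisfied by $v_1,v_2,v_3$ are precisely chosen so that the terms of order $\ve$ and $\ve^2$ in the interior equation and in the Neumann data cancel, so that $F_\ve$ and $G_\ve$ are genuinely of the next order. The main point is then that, after dividing by $\ve^3$, $F_\ve$ and $G_\ve$ are bounded (in the appropriate dual/trace norms) uniformly in $\ve$: the uncancelled contributions come from the Taylor remainder $O(\ve^3 z_1^4)$ in $\ve^{-1}\rho(\ve z_1)$, from the cross terms produced by the cutoff $\chi_\ve$ (derivatives of $\chi_\ve$ are supported in the annulus $R_0\ve|\ln\ve|\le|\ve z-P|\le 2R_0\ve|\ln\ve|$, where $w$ and the $v_i$ are already as small as $O(\ve^{R_0})$), and from the higher-order pieces of $\rho$, $\rho'$, $\rho''$ appearing in $\Delta_z$; all of these, together with the exponential decay $|v_i(y)|\le Ce^{-\mu|y|}$, integrate to something $O(1)$ after the $\ve^3$ is extracted.

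The analytic core is then a standard energy estimate for the operator $-\Delta+1$ with Neumann data. Testing the equation for $\xi_P$ against $\xi_P$ itself and integrating by parts gives
\[
\int_{\Omega_\ve}\big(|\nabla\xi_P|^2+\xi_P^2\big)
= -\int_{\Omega_\ve}F_\ve\,\xi_P+\int_{\partial\Omega_\ve}G_\ve\,\xi_P,
\]
and since $-\Delta+1$ is coercive on $H^1$ with no kernel (no eigenvalue issue here, unlike the linearization around $w$), the right-hand side is controlled by $\|F_\ve\|_{(H^1)^*}\|\xi_P\|_{H^1}+\|G_\ve\|_{H^{-1/2}(\partial\Omega_\ve)}\|\xi_P\|_{H^1}$ via the trace theorem, yielding $\|\xi_P\|_{H^1(\Omega_\ve)}\le C(\|F_\ve\|+\|G_\ve\|)\le C$. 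One small care point: the domain $\Omega_\ve$ is expanding, so I would either note that the trace constant and the coercivity constant for $\Omega_\ve$ are uniformly bounded (the boundary is a fixed smooth curve rescaled, so locally flat at scale $\ve$, and $\partial\Omega_\ve$ has bounded geometry), or localize near $P$ using $\chi_\ve$ since everything is exponentially small away from $P$ anyway. Existence and uniqueness of $\xi_P$ follows from Lax–Milgram once $F_\ve,G_\ve$ are identified as elements of the relevant spaces.

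The step I expect to be the main obstacle is not the energy estimate but the verification that the remainder really is $O(\ve^3)$ in the right norm — i.e. showing that nothing of order $\ve$ or $\ve^2$ survives after the corrections $v_1,v_2,v_3$ are subtracted. This requires matching, term by term, the $\ve$- and $\ve^2$-coefficients in the expansion of $(\Delta_z-1)h_P$ and of the boundary operator against the defining equations of $v_1$ (order $\ve$ boundary term from $\frac12\rho''(0)\ve z_1^2$), $v_2$ (order $\ve^2$ interior term $-2\rho''(0)y_1\partial_{y_1y_2}v_1$ and the corresponding boundary term), and $v_3$ (order $\ve^2$ boundary term from $\frac16\rho^{(3)}(0)\ve^2 z_1^3$), while also accounting for the Neumann data $\partial w/\partial\nu$ on $\partial\Omega_\ve$ which, expanded via \equ{y}, contributes exactly the terms $\tfrac{w'}{|y|}\tfrac{\rho''(0)}{2}y_1^2$ and $\tfrac{w'}{|y|}\tfrac13\rho^{(3)}(0)y_1^3$ at the two orders. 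I would organize this as a lemma computing $S$-type residuals order by order, and keep track of the polynomial-in-$z_1$ growth being beaten by $e^{-\mu|y|}$ on the support of $\chi_\ve$ so that all integrals converge with $\ve$-independent bounds.
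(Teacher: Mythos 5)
Your outline is essentially the argument the paper relies on: the paper gives no proof of Proposition \ref{pro1} itself but cites Wei--Winter \cite{ww1}, where the estimate is obtained exactly by this route, namely Taylor expansion of the boundary and of the transformed operator, cancellation of the $\ve$ and $\ve^2$ orders through the choice of $v^{(1)},v^{(2)},v^{(3)}$, and an elliptic (energy) estimate for the remainder. So your proposal is correct and takes the same approach; the only care points (uniformity of constants on $\Omega_\ve$, cutoff cross terms of size $O(\ve^{\mu R_0})$ with $R_0$ large) are ones you already flag.
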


Proposition \ref{pro1} was proved in \cite{ww1} by Taylor expansion and a rigorous estimate for the reminder using estimates for elliptic partial differential equations. Moreover, one can checked that $|\xi(z)|\leq Ce^{-\mu|z-P|}$ for some $0<\mu<1$.

Similarly we know from \cite{ww1} that
\begin{proposition}\label{pro2}
\begin{equation}
[\frac{\partial w}{\partial \tau_P}-\frac{\partial {\mathcal P}_{\Omega_\ve}w}{\partial \tau_P}](z-P)=\ve \eta(y)\chi_\ve(z-P)+\ve^2\eta_1(z), \ z\in \Omega_\ve,
\end{equation}
where $\eta$ is the unique solution of the following equation:
\begin{equation}
\left\{\begin{array}{l}
\Delta \eta-\eta=0 \mbox{ in }\R^2_+,\\
\frac{\partial \eta}{\partial y_2}=-\frac{1}{2}(\frac{w''}{|y|^2}-\frac{w'}{|y|^3})\rho''(0)y_1^3-\frac{w'}{|y|}\rho''(0)y_1 \mbox{ on }\partial R^2_+.
\end{array}
\right.
\end{equation}
Moreover,
\begin{equation}
\|\eta_1\|_{H^1(\Omega_\ve)}\leq C.
\end{equation}
\end{proposition}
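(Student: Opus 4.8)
The plan is to follow the proof of Proposition \ref{pro1}: express the left-hand side as $\ve$ times an explicit limiting profile on $\R^2_+$ plus a remainder of order $\ve^2$, identify the profile through the boundary value problem it is forced to satisfy, and control the remainder by elliptic estimates on $\Omega_\ve$. Set $h_P=w(\cdot-P)-\mathcal{P}_{\Omega_\ve}w(\cdot-P)$ and
\begin{equation*}
\psi_P(z):=\Big[\frac{\partial w}{\partial\tau_P}-\frac{\partial\mathcal{P}_{\Omega_\ve}w}{\partial\tau_P}\Big](z-P)=\frac{\partial}{\partial\tau_P}h_P(z).
\end{equation*}
Since $h_P$ solves $\Delta h_P-h_P=0$ in $\Omega_\ve$, $\frac{\partial h_P}{\partial\nu}=\frac{\partial}{\partial\nu}w(z-P)$ on $\partial\Omega_\ve$, and neither $\Omega_\ve$ nor its outer normal $\nu(z)$ depends on the spike location $P$, differentiating this problem along the arc-length parameter of $P$ on $\partial\Omega_\ve$ commutes through the equation and gives $\Delta\psi_P-\psi_P=0$ in $\Omega_\ve$ and $\frac{\partial\psi_P}{\partial\nu}=\frac{\partial}{\partial\tau_P}\big(\frac{\partial}{\partial\nu}w(z-P)\big)$ on $\partial\Omega_\ve$. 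Thus $\psi_P$ is harmonic for $\Delta-1$ in the interior and everything is encoded in its Neumann datum.

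To identify $\eta$, I would expand this datum in the flattening coordinates $y=y(z;P)$ of \eqref{y}, using $\ve^{-1}\rho(\ve z_1)=\frac12\rho''(0)\ve z_1^2+\frac16\rho^{(3)}(0)\ve^2z_1^3+O(\ve^3z_1^4)$ and the radial expansion of $w$ near $\partial\R^2_+$ (where $\partial_{y_2}w(y)=\frac{w'}{|y|}y_2$ vanishes). The derivative $\partial_{\tau_P}$ acts on the argument $z-P$, on the map $y(\cdot;P)$, and on the local description of $\partial\Omega_\ve$ near $P$ (whose graph function carries $\rho''(0)=H(\ve P)$ and the higher coefficients); the last contributions produce an extra power of $\ve$, since $\frac{d}{d\sigma}H(\ve P)=\ve H'(\gamma(s))$ and the curvature of $\partial\Omega_\ve$ is $O(\ve)$, so to leading order $\partial_{\tau_P}$ acts on the $y$-variable like $\partial_{y_1}$ up to sign. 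A direct Taylor computation then shows that the $O(\ve)$-part of the Neumann datum of $\psi_P$ is $\ve$ times the function $-\frac12(\frac{w''}{|y|^2}-\frac{w'}{|y|^3})\rho''(0)y_1^3-\frac{w'}{|y|}\rho''(0)y_1$ in the statement — equivalently $-\partial_{y_1}$ of the Neumann datum of $v^{(1)}$ — so $\eta$ is the unique exponentially decaying solution of its stated problem, and in fact $\eta=-\partial_{y_1}v^{(1)}$; in particular $|\eta(y)|\le Ce^{-\mu|y|}$ for every $\mu<1$.

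For the remainder, define $\eta_1(z):=\ve^{-2}\big(\psi_P(z)-\ve\,\eta(y)\chi_\ve(z-P)\big)$. Inserting the ansatz into the problem for $\psi_P$, $\eta_1$ solves $\Delta\eta_1-\eta_1=F_\ve$ in $\Omega_\ve$, $\frac{\partial\eta_1}{\partial\nu}=G_\ve$ on $\partial\Omega_\ve$, where $F_\ve$ and $G_\ve$ collect, after division by $\ve^2$, the errors created by the ansatz: $F_\ve$ contains $-\ve^{-1}$ times the commutator $(\Delta_z-\Delta_y)(\eta\chi_\ve)$, whose coefficients are $O(\ve)$ up to polynomial factors in $y_1$, plus the commutator of $\Delta-1$ with the cut-off $\chi_\ve$, which is supported where $w,\eta=O(\ve^{R_0})$; and $G_\ve$ is $\ve^{-2}$ times the $O(\ve^2)$-tail of the Neumann datum from the previous step. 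The exponential decay of $w$, $v^{(1)}$ and $\eta$ absorbs the polynomial factors, so $\|F_\ve\|_{L^2(\Omega_\ve)}+\|G_\ve\|_{H^{1/2}(\partial\Omega_\ve)}\le C$; since the Neumann form of $\Delta-1$ is coercive on $H^1(\Omega_\ve)$ with constant $1$ and $\partial\Omega_\ve$ is uniformly $C^2$ (a graph with bounded $C^2$-norm), standard elliptic estimates give $\|\eta_1\|_{H^1(\Omega_\ve)}\le C$. Interior elliptic estimates together with a comparison argument against the barrier $e^{-\mu|z-P|}$ then give the pointwise bound $|\eta_1(z)|\le Ce^{-\mu|z-P|}$, exactly as for $\xi_P$ in Proposition \ref{pro1}.

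The main obstacle is the $\ve$-expansion in the identification step: one must cleanly separate, inside $\frac{\partial}{\partial\tau_P}\big(\frac{\partial}{\partial\nu}w(z-P)\big)$, the genuine $O(\ve)$ contribution — which must reproduce exactly the stated Neumann datum of $\eta$ — from the $O(\ve^2)$ remainder, while tracking how the local graph description of $\partial\Omega_\ve$ near $P$ varies with $P$. Once this expansion is fixed, everything else is a routine adaptation of the argument of \cite{ww1} used for Proposition \ref{pro1}.
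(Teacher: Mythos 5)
Your proposal is correct and takes essentially the same route as the paper, which offers no independent argument for this proposition but cites \cite{ww1}, where (as for Proposition \ref{pro1}) the expansion is obtained precisely by Taylor-expanding the Neumann data in the flattened coordinates and controlling the remainder by elliptic estimates. Your observation that the stated boundary datum is $-\partial_{y_1}$ of that of $v^{(1)}$, so that $\eta=-\partial_{y_1}v^{(1)}$ (odd in $y_1$, matching the paper's parity remark), and that the $P$-dependence of the local graph/curvature only enters at order $\ve^2$, is exactly the structure underlying the cited computation.
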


One can observe that $\eta(y)$ is an odd function in $y_1$. It can be seen that $|\eta_i(y)|\leq Ce^{-\mu|y|}$ for some $0<\mu<1$.

Finally, let
\begin{equation}
L_0=\Delta-1+pw^{p-1}(z).
\end{equation}
We have
\begin{lemma}\label{lem1}
\begin{equation}
Ker(L_0)\cap H^2_N(\R^2_+)=span\{\frac{\partial w}{\partial y_1}\},
\end{equation}
where $H^2_N(\R^2_+)=\{u\in H^2(\R^2_+),\frac{\partial u}{\partial y_2}=0 \mbox{ on }\partial \R^2_+\}$.
\end{lemma}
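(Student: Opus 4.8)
The statement to prove is Lemma~\ref{lem1}: the kernel of $L_0 = \Delta - 1 + pw^{p-1}$ restricted to $H^2_N(\R^2_+)$ (Neumann boundary condition on $\partial\R^2_+$) is exactly the one-dimensional span of $\partial w/\partial y_1$.

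\textbf{The plan.} First I would check that $\partial w / \partial y_1$ genuinely lies in this kernel: differentiating the equation $\Delta w - w + w^p = 0$ in $y_1$ gives $L_0(\partial_{y_1} w) = 0$ on all of $\R^2$, and since $w = w(|y|)$ is radial, $\partial_{y_1} w = w'(r)\, y_1/r$ vanishes to the right order; more importantly its normal derivative on $\{y_2 = 0\}$ is $\partial_{y_2}\partial_{y_1} w = \partial_{y_1}\partial_{y_2} w$, and $\partial_{y_2} w = w'(r) y_2/r$ vanishes on $\{y_2=0\}$ together with its $y_1$-derivative, so the Neumann condition holds. Thus $\mathrm{span}\{\partial_{y_1} w\} \subseteq \mathrm{Ker}(L_0)\cap H^2_N(\R^2_+)$. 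The decay $|\partial_{y_1}w| \le Ce^{-|y|}$ ensures it is in $H^2$.

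\textbf{Reduction to the whole-plane nondegeneracy result.} The main idea is to reflect. Given $u \in H^2_N(\R^2_+)$ with $L_0 u = 0$, define its even extension $\tilde u(y_1, y_2) = u(y_1, |y_2|)$ to $\R^2$. Because $\partial_{y_2} u = 0$ on $\{y_2 = 0\}$, the extension $\tilde u$ is $H^2_{loc}(\R^2)$ and satisfies $L_0 \tilde u = 0$ weakly on all of $\R^2$ (the potential $pw^{p-1}$ is itself even in $y_2$ since $w$ is radial, so the reflected equation is consistent); elliptic regularity then gives $\tilde u \in H^2(\R^2)$ with exponential decay. Now invoke the classical nondegeneracy theorem for the ground state $w$ in $\R^2$: the kernel of $\Delta - 1 + pw^{p-1}$ in $H^2(\R^2)$ (equivalently $L^2(\R^2)$) is exactly $\mathrm{span}\{\partial_{y_1} w, \partial_{y_2} w\}$. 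This is standard (see e.g. the reference \cite{k} already cited, or Ni--Takagi); I would cite it rather than reprove it. Hence $\tilde u = a\,\partial_{y_1} w + b\,\partial_{y_2} w$ for constants $a, b$.

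\textbf{Killing the $\partial_{y_2} w$ component.} The final step uses the symmetry forced by reflection: $\tilde u$ is even in $y_2$ by construction, whereas $\partial_{y_2} w = w'(r) y_2 / r$ is odd in $y_2$ and $\partial_{y_1} w$ is even in $y_2$. Matching parities forces $b = 0$, so $\tilde u = a\, \partial_{y_1} w$, and restricting back to $\R^2_+$ gives $u = a\, \partial_{y_1} w$. This proves the reverse inclusion and completes the proof. The only genuinely nontrivial input is the whole-plane nondegeneracy of $w$, which I would treat as a quotable black box; the rest is the reflection argument plus parity bookkeeping, and the one technical point to be careful about is justifying that the even extension solves the equation across $\{y_2 = 0\}$ in the weak sense, which follows precisely from the Neumann condition $\partial_{y_2} u|_{y_2 = 0} = 0$.
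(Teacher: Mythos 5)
Your proof is correct, but it is worth noting that the paper does not actually prove this lemma at all: it simply quotes Lemma 4.2 of Ni--Takagi \cite{nt1}, where the half-space nondegeneracy statement is established directly. Your route is the standard self-contained alternative: check that $\frac{\partial w}{\partial y_1}$ satisfies both the equation (by differentiating $\Delta w - w + w^p=0$) and the Neumann condition (since $\partial_{y_2}\partial_{y_1}w$ vanishes on $\{y_2=0\}$ by radial symmetry), then even-reflect any kernel element across $\{y_2=0\}$ -- the Neumann condition is exactly what makes the reflected function an $H^2_{loc}$ weak solution of the same equation on all of $\R^2$, the potential being even in $y_2$ -- and invoke the classical whole-plane nondegeneracy $\mathrm{Ker}(\Delta-1+pw^{p-1})\cap H^2(\R^2)=\mathrm{span}\{\partial_{y_1}w,\partial_{y_2}w\}$, after which parity in $y_2$ kills the $\partial_{y_2}w$ component. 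What your approach buys is transparency: the only black box is the full-space nondegeneracy of the ground state, which is genuinely classical, and the reflection plus parity bookkeeping is elementary; what the paper's citation buys is brevity, since Ni--Takagi's lemma is stated precisely in the half-space Neumann setting needed here. The one point to keep straight in your write-up is the attribution: Kwong \cite{k} proves uniqueness of $w$, while the nondegeneracy (kernel spanned by the translations) is the statement usually credited to Ni--Takagi or deduced from Kwong's ODE analysis, so the citation should be phrased accordingly.
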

\begin{proof}
See Lemma 4.2 in \cite{nt1}.
\end{proof}

Next we state a useful lemma we will always use:

\begin{lemma}\label{even}
If $|q_1-q_2|<<|q_1|$, we have the following estimate:
\begin{equation}
\int_{\R^2_+}pw(y)^{p-1}(w(y-q_1e_1)+w(y+q_2e_1))\frac{\partial w}{\partial y_1}dy=O(|q_1-q_2|w(|q_1|))
\end{equation}
as $|q_1|\to \infty$.
\end{lemma}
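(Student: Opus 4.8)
\textbf{Proof proposal for Lemma \ref{even}.}

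The plan is to exploit the exponential decay of $w$ together with the standard asymptotics $w(y)=c_{N,p}|y|^{-1/2}e^{-|y|}(1+o(1))$ and $w'(r)=-(1+o(1))w(r)$, and to reduce the integral to a sum of two contributions whose leading terms cancel up to the stated error. First I would write the integrand as $pw^{p-1}\partial_{y_1}w = \partial_{y_1}(w^p)$ (up to the constant factor, using $(w^p)'=pw^{p-1}w'$ in the radial variable, so more precisely $pw(y)^{p-1}\partial_{y_1}w(y)=\partial_{y_1}\big(\tfrac{1}{?}\big)$ — actually $pw^{p-1}w_{y_1}=(w^p)_{y_1}$ directly), and split the claimed integral as $A(q_1)+A(-q_2)$ where
\[
A(q):=\int_{\R^2_+}(w^p)_{y_1}(y)\,w(y-qe_1)\,dy.
\]
Then integrate by parts in $y_1$: since $w^p$ decays exponentially there is no boundary term at infinity in the $y_1$-direction, and $\partial\R^2_+=\{y_2=0\}$ contributes nothing because we differentiate in $y_1$ (tangential) and integrate over all of $\R$; hence $A(q)=-\int_{\R^2_+}w^p(y)\,\partial_{y_1}w(y-qe_1)\,dy$.

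Next I would expand $A(q_1)+A(-q_2)$ by comparing $\partial_{y_1}w(y-q_1e_1)$ and $\partial_{y_1}w(y+q_2e_1)$. Writing $q_2 = q_1 - (q_1-q_2)$ and Taylor-expanding in the small displacement $q_1-q_2$, one gets
\[
A(q_1)+A(-q_2) = -\int_{\R^2_+}w^p(y)\big[\partial_{y_1}w(y-q_1e_1)+\partial_{y_1}w(y+q_1e_1)\big]dy + O\!\Big(|q_1-q_2|\int_{\R^2_+}w^p(y)|\nabla\partial_{y_1}w(y-\theta e_1)|\,dy\Big),
\]
for some $\theta$ between $-q_2$ and $q_1$. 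The main term vanishes by oddness: the map $y_1\mapsto -y_1$ fixes $w^p(y)$ (as $w$ is radial) while sending $\partial_{y_1}w(y-q_1e_1)+\partial_{y_1}w(y+q_1e_1)$ to its negative, so that integral is exactly $0$. For the error term, the integral $\int_{\R^2_+}w^p(y)|D^2w(y-\theta e_1)|\,dy$ is, by the exponential decay of $w$ and its derivatives and a Laplace-type (convolution) estimate, of order $w(|q_1|)$ as $|q_1|\to\infty$ — this is the standard interaction estimate, where the dominant balance $e^{-p|y|}e^{-|y-\theta e_1|}$ is concentrated near $y=0$ and yields $e^{-|\theta|}\sim e^{-|q_1|}$ up to polynomial corrections absorbed into the $(1+o(1))$. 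Combining, $A(q_1)+A(-q_2)=O(|q_1-q_2|\,w(|q_1|))$, which is the claim.

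The step I expect to be the main obstacle is making the error estimate $\int_{\R^2_+}w^p(y)|D^2w(y-\theta e_1)|\,dy = O(w(|q_1|))$ genuinely rigorous and uniform in $\theta$ over the relevant range (which is legitimate precisely because $|q_1-q_2|\ll|q_1|$ keeps $|\theta|$ comparable to $|q_1|$). One has to split $\R^2_+$ into the region near $0$, the region near $\theta e_1$, and the far region, and in each use the precise asymptotics together with the bound $w^{p-1}\le Ce^{-(p-1)|y|}$ to see that the $w^p$ factor dominates and localizes the integral near the origin, producing the single exponential $e^{-|\theta|}$; the polynomial prefactors and the $\R^2_+$ versus $\R^2$ distinction only affect lower-order terms. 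All of these are routine variants of the classical spike-interaction computations (cf. the references on boundary-spike solutions cited earlier), so once the splitting is set up carefully the estimate follows; I would not grind through the three regions in detail here.
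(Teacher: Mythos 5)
Your proof is correct and follows essentially the same route as the paper: the key cancellation comes from the reflection $y_1\mapsto -y_1$ (evenness of $w^p$ and the domain, oddness of the translated terms), and the remainder is controlled by the mean value theorem in the $e_1$-direction together with the standard interaction estimate $\int_{\R^2_+}w^p(y)\,|Dw(y-\theta e_1)|\,dy=O(w(|\theta|))$. The only difference is your preliminary integration by parts via $pw^{p-1}\partial_{y_1}w=\partial_{y_1}(w^p)$, which the paper avoids by applying the reflection directly to rewrite $w(y-q_1e_1)+w(y+q_2e_1)$ as the difference $w(y-q_1e_1)-w(y-q_2e_1)$ and then using the mean value theorem on $w$ itself; this is a cosmetic reshuffling, and both arguments invoke the final convolution estimate at the same level of detail.
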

\begin{proof}
By the oddness of $\frac{\partial w}{\partial y_1}$ in $y_1$, one has
\begin{eqnarray*}
&&\int_{\R^2_+}pw(y)^{p-1}(w(y-q_1e_1)+w(y+q_2e_1))\frac{\partial w}{\partial y_1}dy\\
&&=\int_{\R^2_+}pw(y)^{p-1}(w(y-q_1e_1)-w(y-q_2e_1))\frac{\partial w}{\partial y_1}dy\\
&&=\int_{\R^2}pw(y)^{p-1}|\frac{\partial w}{\partial y_1}|O(w'(y-q_1e_1)|q_1-q_2|)dy\\
&&=O(|q_1-q_2|)w(|q_1|).
\end{eqnarray*}
\end{proof}

\begin{remark}
In the following sections, we will denote by $y^i=(y^i_1,y^i_2)$ to be the transformation defined by (\ref{y}) centered at the point $P_i$ and $v_i^{(j)}$ be the corresponding solutions in the expansion of $h_{P_i}$.
\end{remark}

\setcounter{equation}{0}
\section{Liapunov-Schmidt Reduction}\label{sec4}

In this section, we reduce problem (\ref{eq1}) to finite dimension by the Liapunov-Schmidt reduction method. We first introduce some notations.
Let $H^2_N(\Omega_\ve)$ be the Hilbert space defined by
\begin{equation}
H^2_N(\Omega_\ve)=\{u\in H^2(\Omega_\ve)|\frac{\partial u}{\partial \nu}=0 \mbox{ on }\partial \Omega_\ve\},
\end{equation}

Define
\begin{equation}
S(u)=\Delta u-u+u^p
\end{equation}
for $u\in H^2_N(\Omega_\ve)$. The solving equation (\ref{eq1}) is equivalent to
\begin{equation}
S(u)=0, u\in H^2_N(\Omega_\ve).
\end{equation}
To this end, we first study the linearized operator
\begin{equation*}
L_\ve(\psi):=\Delta\psi-\psi+p(\sum_{i=1}^k {\mathcal P}_{\Omega_{\ve,P_i}}w(z-P_i))^{p-1}\psi,
\end{equation*}
and define the approximate kernels to be
\begin{equation}
Z_i=\frac{\partial {\mathcal P}_{\Omega_{\ve,P_i}}w(z-P_i)}{\partial \tau_{P_i}},\end{equation}
for $i=1,\cdots, k$.

\subsection{Linear projected problem}\label{sec4.1}

We first develop a solvability theory for the linear projected problem:
\begin{equation}\label{linear}
\left\{\begin{array}{l}
L_\ve(\psi)=h+\sum_{i=1}^k c_iZ_i,\\
\int_{\Omega_\ve}\psi Z_idz=0, i=1,\cdots, k,\\
\psi\in H^2_N(\Omega_\ve)
\end{array}
\right.
\end{equation}

Given $0<\mu < 1$, consider the norms
\begin{equation}\label{star}
 \quad \| h \|_{*} =\sup_{z \in \Omega_\ve} | (\sum_{j } e^{-\mu |z-P_i| } )^{-1} h(z) |
\end{equation}
where $P_i \in \Lambda_k$ with $\Lambda_k$ defined in \equ{points1}.

\medskip

The proof of the following Proposition on linearized operator, which we postpone to the appendix, is standard.

\begin{proposition} \label{p1}
There exist positive numbers $\mu \in (0,1)$,
$\ve_0$ and $C$, such that for all $\ve \leq \ve_0$, and for any
points $P_j$, $j=1,\ldots , k$
given by \equ{points1}, there is a unique solution $(\psi , c_i  )$  to problem
\equ{linear}. Furthermore
\begin{equation}
 \|\psi \|_{*} \le C \|h\|_*  .
\label{est}\end{equation}
\end{proposition}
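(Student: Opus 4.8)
\textbf{Proof strategy for Proposition \ref{p1}.}

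The plan is to follow the now-classical scheme for linear projected problems in Lyapunov--Schmidt reductions, proving \emph{a priori} the estimate \equ{est} first and then obtaining existence and uniqueness by a Fredholm-type argument. I would begin by establishing the key claim: there exist $\mu\in(0,1)$, $\ve_0>0$ and $C>0$ such that for all $\ve\le\ve_0$, any choice of points in $\Lambda_k$, and any solution $(\psi,c_i)$ of \equ{linear}, one has $\|\psi\|_*\le C\|h\|_*$. The argument is by contradiction: suppose there are sequences $\ve_n\to 0$, points $P_j^{(n)}\in\Lambda_{k}$, functions $h_n$ with $\|h_n\|_*\to 0$, and solutions $\psi_n$ with $\|\psi_n\|_*=1$. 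Testing the equation $L_{\ve_n}(\psi_n)=h_n+\sum_i c_i^{(n)}Z_i$ against $Z_j$ and using the near-orthogonality relations $\int Z_iZ_j=(c_0+o(1))\delta_{ij}$ together with $\int L_{\ve_n}(\psi_n)Z_j=\int \psi_n L_{\ve_n}(Z_j)+\text{(boundary terms)}=O(\ve_n)\|\psi_n\|_*+o(1)$ (since $Z_j$ is an approximate kernel, $L_{\ve_n}Z_j$ is small in $\|\cdot\|_*$), one extracts $|c_i^{(n)}|=o(1)\|\psi_n\|_* + O(\|h_n\|_*) = o(1)$.

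The heart of the contradiction argument is the local analysis near each spike. Around each $P_j^{(n)}$, the rescaled functions $\tilde\psi_n(y)=\psi_n(\cdot+P_j^{(n)})$ (straightened via the change of variables \equ{y}) are bounded in $L^\infty_{loc}$, and by elliptic estimates converge in $C^1_{loc}$ to a bounded solution $\psi_\infty$ of $L_0\psi_\infty=0$ on $\R^2_+$ (the half-space, because the boundary flattens in the limit) with Neumann condition $\partial\psi_\infty/\partial y_2=0$. By Lemma \ref{lem1}, $\psi_\infty=a\,\partial w/\partial y_1$ for some constant $a$. The orthogonality condition $\int_{\Omega_{\ve_n}}\psi_n Z_j=0$ passes to the limit — one must check that the tails contribute negligibly, using the exponential decay built into $\|\cdot\|_*$ and the bound $\|h_n\|_*\to 0$ — to give $a\int_{\R^2_+}|\partial w/\partial y_1|^2=0$, hence $a=0$ and $\psi_\infty\equiv 0$. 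This forces $\psi_n\to 0$ uniformly on each fixed ball around the spikes. Away from all spikes, on the region where $\sum_i e^{-\mu|z-P_i|}$ is small, I would use a barrier/maximum-principle argument: the function $\sum_i e^{-\mu|z-P_i|}$ is a supersolution of $-\Delta+\tfrac12$ (for $\mu<1/\sqrt2$, say) up to the spike cores, so comparing $\psi_n$ against $C(\|h_n\|_*+\sup_{\text{cores}}|\psi_n|)\sum_i e^{-\mu|z-P_i|}$ gives $\|\psi_n\|_*\le o(1)$, contradicting $\|\psi_n\|_*=1$. The delicate point throughout is that $k=k_{\ve}\to\infty$, so all estimates — the orthogonality defects, the interaction terms $w((s_i-s_j)/\ve)$, the barrier construction — must be uniform in the number of spikes; this is exactly where the spacing bounds \equ{equation101} and the structure of $\Lambda_k$ enter, ensuring $\sum_{j\ne i} w((P_i-P_j)/\ve)$ stays summably small.

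Once \equ{est} is in hand, existence and uniqueness follow by a standard argument: working in the Hilbert space $H=\{\psi\in H^1_N(\Omega_\ve):\int\psi Z_i=0\}$, one reformulates \equ{linear} as $\psi = T_\ve(\psi) + \bar h$ where $T_\ve$ is compact (via Rellich) and $\bar h$ incorporates $h$; the a priori bound shows $I-T_\ve$ is injective, hence surjective by Fredholm alternative, giving a unique solution in $H^1$, and elliptic regularity upgrades it to $H^2_N(\Omega_\ve)$. The $\|\cdot\|_*$ bound \equ{est} then transfers from the a priori estimate. The main obstacle, as noted, is the uniformity in $k_\ve\to\infty$: keeping every constant independent of the growing number of spikes, and in particular controlling the accumulated error from the mutual interactions and from the boundary-curvature corrections $v_i^{(j)}$ in the projected kernels $Z_i$, is what makes this more than a routine adaptation of the single-spike theory. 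I expect the bulk of the work to be the careful bookkeeping of these sums, which is why the authors defer it to the appendix.
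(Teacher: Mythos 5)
Your proposal follows essentially the same route as the paper's appendix proof: an a priori bound in the $\|\cdot\|_*$ norm obtained by contradiction, with the $c_i$ shown to be small by testing against $Z_j$ and inverting the Gram matrix $M$, a barrier $\sum_i e^{-\mu|z-P_i|}$ controlling $\psi$ away from the spike cores, a blow-up limit near a spike handled by the nondegeneracy Lemma \ref{lem1}, and finally existence and uniqueness via the Fredholm alternative on the orthogonal subspace. The minor differences (working in $H^1_N$ plus elliptic regularity rather than directly in $H^2_N$, and proving the local limits vanish before invoking the barrier rather than after) are purely organizational.
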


In the following, if $\psi$ is the unique solution given by
Proposition \ref{p1}, we set
\begin{equation}
\label{carmen1} \psi = {\mathcal A} (h).
\end{equation}
Estimate \equ{est} implies
\begin{equation}\label{carmen2}
\| {\mathcal A} (h ) \|_{*} \leq C \| h \|_{*}.
\end{equation}

\subsection{The Non-linear Projected Problem}\label{sec4.2}

We are now in the position to solve the equation:
\begin{equation}\label{nonlinear}
\left\{\begin{array}{l}
L_\ve\psi=E+N(\psi)+\sum_{i=1}^kc_iZ_i,\\
\int_{\Omega_\ve}\psi Z_i=0 \mbox{ for }i=1,\cdots, k,\\
\psi\in H^2_N(\Omega_\ve)
\end{array}
\right.
\end{equation}
where $E$ is the error of the approximate solution $U$:
\begin{eqnarray}
E&=&\Delta(\sum_{i=1}^k {\mathcal P}_{\Omega_{\ve,P_i}}w(z-P_i))-(\sum_{i=1}^k {\mathcal P}_{\Omega_{\ve,P_i}}w(z-P_i))\\
&&+(\sum_{i=1}^k {\mathcal P}_{\Omega_{\ve,P_i}}w(z-P_i))^p,\nonumber
\label{error}
\end{eqnarray}
and $N(\psi)$ is the nonlinear term:
\begin{eqnarray}
N(\psi)&=&((\sum_{i=1}^k {\mathcal P}_{\Omega_{\ve,P_i}}w(z-P_i))+\psi)^p-(\sum_{i=1}^k {\mathcal P}_{\Omega_{\ve,P_i}}w(z-P_i))^p\\
&&-p(\sum_{i=1}^k {\mathcal P}_{\Omega_{\ve,P_i}}w(z-P_i))^{p-1}\psi.\nonumber
\end{eqnarray}

We have the validity of the following result:

\begin{proposition} \label{p3}
There exist $\mu\in (0,1)$, and positive numbers $\ve_0$, $C$, such that for all $\ve \leq \ve_0$, for any
points $P_j$, $j=1,\ldots , k$
given by \equ{points1},  there is a unique solution $(\psi , c_i )$  to problem
\equ{nonlinear}. This solution depends continuously on the parameters of the construction (namely $P_j$, $j=1, \ldots , k$) and furthermore
\begin{equation}
 \|\psi \|_{*} \le C \ve.
\label{est2}\end{equation}
\end{proposition}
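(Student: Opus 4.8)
\textbf{Proof proposal for Proposition \ref{p3}.}
The plan is to solve the projected nonlinear problem \equ{nonlinear} by a fixed point argument, using the linear theory of Proposition \ref{p1} to invert $L_\ve$ on the subspace orthogonal to the $Z_i$. Rewriting \equ{nonlinear} as $\psi = {\mathcal A}(E + N(\psi))$ with ${\mathcal A}$ the solution operator from \equ{carmen1}, it suffices to show that the map ${\mathcal T}(\psi) := {\mathcal A}(E + N(\psi))$ is a contraction on the ball $\mathcal{B} = \{\psi \in H^2_N(\Omega_\ve) : \|\psi\|_* \le C_0 \ve, \ \int_{\Omega_\ve}\psi Z_i = 0\}$ for a suitably large constant $C_0$. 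The output $(\psi, c_i)$ is then the unique solution in this ball, and continuous dependence on the points $P_j$ follows because $E$, $N$, and ${\mathcal A}$ all depend continuously on the $P_j$.

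The first main step is the estimate $\|E\|_* \le C\ve$ on the error of the approximate solution $U = \sum_i {\mathcal P}_{\Omega_{\ve,P_i}} w(z-P_i)$. Since each ${\mathcal P}_{\Omega_{\ve,P_i}}w(z-P_i)$ solves $\Delta u - u + w(z-P_i)^p = 0$, the error reduces to $E = \big(\sum_i {\mathcal P}_{\Omega_{\ve,P_i}}w(z-P_i)\big)^p - \sum_i w(z-P_i)^p$. This splits into two contributions: the interaction between different spikes, controlled near $P_i$ by $w\big(\tfrac{s_i - s_{i-1}}{\ve}\big) \le \tfrac{c\ve}{|\ln\ve|}$ from \equ{equation101}, and the boundary-correction terms $h_{P_i} = {\mathcal P}_{\Omega_{\ve,P_i}}w(z-P_i) - w(z-P_i)$, which by Proposition \ref{pro1} are of size $O(\ve)$ in the weighted norm near each spike (the leading term $\ve v^{(1)}$ decaying like $e^{-\mu|y|}$). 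Expanding $(a+b)^p - a^p$ with $a = \sum_i w_{P_i}$ and $b$ the sum of the corrections, using $p>2$ and the exponential decay of $w$, gives $\|E\|_* \le C\ve$; this is essentially the computation carried out in \cite{ww1} for a single spike, summed over the $k_\ve = O(1/|\ve\ln\ve|)$ spikes with the separation bound ensuring the sum does not blow up.

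The second step is the quadratic estimate for the nonlinear term: $\|N(\psi)\|_* \le C\|\psi\|_*^{\min(p,2)}$ and $\|N(\psi_1) - N(\psi_2)\|_* \le C\big(\|\psi_1\|_* + \|\psi_2\|_*\big)^{\min(p,2)-1}\|\psi_1 - \psi_2\|_*$ for $\psi_j \in \mathcal B$. This is routine from the pointwise inequality $|(U+\psi)^p - U^p - pU^{p-1}\psi| \le C(|U|^{p-2}\psi^2 + |\psi|^p)$ (with the obvious modification when $2 < p < 3$), combined with the uniform boundedness of $U$ and the definition of $\|\cdot\|_*$; one checks that multiplying two factors of $(\sum_j e^{-\mu|z-P_j|})$ is controlled by $(\sum_j e^{-\mu|z-P_j|})$ up to a constant, using again the separation \equ{equation101}. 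Then by \equ{carmen2}, for $\psi \in \mathcal B$,
\[
\|{\mathcal T}(\psi)\|_* \le C\|E\|_* + C\|N(\psi)\|_* \le C_1\ve + C_1 (C_0\ve)^{\min(p,2)} \le C_0 \ve
\]
for $C_0 = 2C_1$ and $\ve$ small, and similarly ${\mathcal T}$ is a contraction; the contraction mapping theorem finishes it, and \equ{est2} is exactly $\|\psi\|_* \le C_0\ve$.

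The step I expect to be the main obstacle is the error estimate $\|E\|_* \le C\ve$, specifically tracking the interaction terms uniformly over the large number $k_\ve \sim 1/|\ve\ln\ve|$ of spikes. One must verify that when the pointwise error is weighted by $(\sum_j e^{-\mu|z-P_j|})^{-1}$, the cross terms $w_{P_i}^{p-1} w_{P_j}$ summed over $i \ne j$ remain $O(\ve)$ — this is where the lower bound $|s_i - s_{i-1}| \ge (1+o(1))|\ve\ln\ve|$ and the decay rate $w(r) \sim e^{-r}$ are used in concert, so that a single spike's tail sees its neighbors as $O(\ve/|\ln\ve|)$ and the geometric-type sum over all neighbors converges. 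Everything else is a standard Lyapunov--Schmidt contraction argument, so I would present the error estimate in detail and treat the $N(\psi)$ bounds and the fixed-point iteration more briefly.
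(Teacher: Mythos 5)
Your proposal is correct and follows essentially the same route as the paper: rewrite \equ{nonlinear} as the fixed point problem $\psi={\mathcal A}(E+N(\psi))$, prove $\|E\|_*\le C\ve$ by splitting the error near each spike into neighbor-interaction terms (controlled by the separation \equ{equation101}) and boundary corrections $h_{P_i}=O(\ve)$, prove the quadratic bound on $N(\psi)$, and run the contraction mapping on the ball $\|\psi\|_*\le C\ve$ with continuity in the $P_j$ inherited from the fixed-point characterization. The only cosmetic difference is your exponent $\min(p,2)$ in the $N$-estimate, a refinement the paper does not need since a plain quadratic bound already suffices.
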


\medskip

\begin{proof}
The proof relies on the contraction mapping theorem in
the $\| \cdot \|_{*}$-norm above introduced. Observe that $\psi$
solves \equ{nonlinear} if and only if
\begin{equation}\label{fixed}
\psi = {\mathcal A} \left( E+N(\psi ) \right)
\end{equation}
where ${\mathcal A}$ is the operator introduced in \equ{carmen1}. In
other words, $\psi$ solves \equ{nonlinear} if and only if $\psi$ is a
fixed point for the operator
$$
T(\psi ) :={\mathcal A} \left( E+N(\psi ) \right).
$$
Given $C>0$ large enough, define
$$
{\mathcal B} = \{ \psi \in H_N^2(\Omega_\ve ) \, : \, \| \psi \|_{*} \leq
C\ve, \, \int_{\Omega_\ve} \psi Z_i
= 0 \}
$$
We will prove that $T$ is a contraction mapping from ${\mathcal B}$
in itself.

To do so, we claim that
\begin{equation}
\label{estimateE} \| E \|_{*} \leq C \ve
\end{equation}
and
\begin{equation}
\label{estimateN} \| N(\phi ) \|_{*} \leq C\| \phi \|_{*}^2
\end{equation}
for some fixed function $C$ independent of $\ve$, as $\ve \to
0$. We postpone the proof of the estimates above to the end of
the proof of this Proposition. Assuming the validity of
\equ{estimateE} and \equ{estimateN} and taking into account
\equ{carmen2}, we have for any $\psi \in {\mathcal B}$
$$
\begin{array}{ll}
\| T( \psi ) \|_{*} \leq & C\left[ \| E+N(\psi ) \|_* \right] \leq
C(\ve+\ve^2)
\\
&\leq C_1\ve
\end{array}
$$
for a proper choice of $C_1$ in the definition of ${\mathcal B}$.
Take now $\psi_1 $ and $\psi_2$ in ${\mathcal B}$. Then it is
straightforward to show that
$$
\begin{array}{ll}
\| T(\psi_1 ) - T(\psi_2 )\|_{*} &\leq C \| N(\psi_1 ) - N(\psi_2 )
\|_* \\
\\
&\leq C \left[ \| \psi_1 \|_* +  \| \psi_2
\|_* \right] \, \| \psi_1  - \psi_2  \|_* \\
\\
&\leq o(1)
\| \psi_1  - \psi_2  \|_*
\end{array}
$$
This means that $T$ is a contraction mapping from ${\mathcal B}$
into itself.

To conclude the proof of this Proposition we are left to show the
validity of \equ{estimateE} and \equ{estimateN}. We start with
\equ{estimateE}.

Fix $j \in \{ 1 , \ldots , k\}$ and consider the region $|z-P_j |\leq \frac{\min\{|P_j-P_{j-1}|,|P_j-P_{j+1}|\}}{2}$. In this region the error $E$, whose definition is in \equ{error},
can be estimated in the following way
\begin{eqnarray}\label{EE1}
|E(z) | &&\leq  C  w^{p-1} (z-P_j ) [\sum_{P_i \not= P_j}
w(z-P_i ) + \sum_i h_{P_i}(z)]
\nonumber
\\\
&&\leq  C(\ve+\ve^{\frac{p-\mu}{2}})e^{-\mu|z-P_j|}\leq C\ve e^{-\mu|z-P_j|}
\end{eqnarray}
if we choose $\mu $ small enough such that $p-\mu>2$.

\medskip

Consider now the region $|z-P_j | > \frac{\min\{|P_j-P_{j-1}|,|P_j-P_{j+1}|\}}{2}$, for
all $j $. From
the definition of $E$, we get in the region under consideration
\begin{eqnarray}
|E(z) | &\leq & C \left[ \sum_i h_{P_i}(z)+(\sum_{i=1}^kP_{\Omega_{\ve,P_i}}w(z-P_i))^p-\sum_{i}w(x-P_i)^p\right]
\nonumber \\
&\leq & C\sum_{i}e^{-\mu|z-P_i|}(\ve+\ve^{\frac{p-\mu}{2}})\nonumber\\
&\leq &C\ve \sum_{i}e^{-\mu|z-P_i|}.\label{EE2}
\end{eqnarray}
From \equ{EE1} and \equ{EE2} we get \equ{estimateE}.

\medskip

We now prove \equ{estimateN}. Let $\psi \in {\mathcal B}$. Then
\begin{eqnarray*}
\label{enne} |N(\psi )| &&\leq | ( (\sum_{i=1}^kP_{\Omega_{\ve,P_i}}w(z-P_i))+\psi )^p - (\sum_{i=1}^kP_{\Omega_\ve}w(z-P_i))^p \\
&&- p (\sum_{i=1}^kP_{\Omega_{\ve,P_i}}w(z-P_i))^{p-1} \psi |\leq C \psi^2 .
\end{eqnarray*}
Thus we have
$$
\begin{array}{ll}
|( \sum_{j} e^{-\mu |x-P_j | })^{-1} N(\psi ) | & \leq C \| \psi \|_*^2
\end{array}
$$
This gives \equ{estimateN}.

A direct consequence of the fixed point characterization of $\psi$
given above together with the fact that the error
term $E$  depends
continuously (in the
*-norm) on the parameters $ P_j$, $j=1, \ldots , k$ is that the map $(P_1,\cdots,P_k ) \to \psi $ into the space $C(\bar \Omega_\ve )$ is
continuous (in the $*$-norm).
This concludes the proof of the Proposition.

\end{proof}

\setcounter{equation}{0}
\section{Further expansion of the error} \label{sec5}

For later purpose the asymptotic behaviour of the function $\psi$ as $\ve\to 0$. This is needed to compute the neighboring interactions.

 Before we state the result, we first consider the following equation:
\begin{equation}\label{phi}
\left\{\begin{array}{l}
\Delta \phi-\phi+pw(y)^{p-1}\phi=h+d\frac{\partial w(y)}{\partial y_1}  \mbox{ in }\R^2_+,\\
\frac{\partial \phi}{\partial y_2}=0 \mbox{ on }\partial \R^2_+,\\
\int_{\R^2_+}\phi\frac{\partial w(y)}{\partial y_1}dy=0
\end{array}
\right.
\end{equation}
where $d=\frac{\int_{\R^2_+}h\frac{\partial w}{\partial y_1}}{\int_{\R^2_+}(\frac{\partial w}{\partial y_1})^2}$.
We consider the above equation in the space where $\|h\|_{**}<+\infty$, where $\|h\|_{**}=\sup_{y\in \R^2_+}|e^{\mu_1|y|}h|$ for some $0<\mu_1<1$. It is quite standard to show the solvability of the above equation and $\phi$ will satisfy the following estimate:
\begin{equation}
\|\phi\|_{**}\leq C\|h\|_{**}.
\end{equation}

Now we decompose $\psi$ as follows:
\begin{proposition}
\begin{equation}
\psi=\sum_{i={1}}^k\chi_\ve(z-P_i)\phi_i+\ve^2\psi_1,
\end{equation}
where
\begin{equation}\label{psi}
\|\psi_1\|_*\leq C,
\end{equation}
and $\phi_i=\phi_i(y^i)$ is the unique solution of
\begin{equation}
\left\{\begin{array}{l}
\Delta \phi_i-\phi_i+pw(y^i)^{p-1}\phi_i=H_i+d_i\frac{\partial w(y^i)}{\partial y_i} \mbox{ in }\R^2_+,\\
\frac{\partial \phi_i}{\partial y_2}=0 \mbox{ on }\partial \R^2_+,\\
\int_{\R^2_+}\phi_i\frac{\partial w(y^i)}{\partial y_1}dy=0
\end{array}
\right.
\end{equation}
where $d_i$ is defined such that the right hand side of the above equation is orthogonal to $\frac{\partial w(y^i)}{\partial y_1}$ in $L^2$ norm, and
\begin{equation}
H_i=-pw(y^i)^{p-1}[w(y^i-\frac{s_{i-1}-s_i}{\ve}e_1)+w(y^i-\frac{s_{i+1}-s_i}{\ve}e_1)+\ve v_i^{(1)}],
\end{equation}
for $i=2,\cdots,k-1$ and
\begin{equation}
H_1=-pw(y^1)^{p-1}[w(y^1-\frac{s_2-s_1}{\ve}e_1)+\ve v_1^{(1)}],
\end{equation}
and
\begin{equation}
H_k=-pw(y^k)^{p-1}[w(y^k-\frac{s_{k-1}-s_k}{\ve}e_1)+\ve v_k^{(1)}],
\end{equation}
and we denote
\begin{equation}
v_{i}^{(1)}=v_{P_i}^{(1)}(y^i)
\end{equation}
are the solutions obtained in Section \ref{sec2} centered at the point $P_i$.
\end{proposition}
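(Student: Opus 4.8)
The plan is to obtain the decomposition $\psi=\sum_i \chi_\ve(z-P_i)\phi_i+\ve^2\psi_1$ by first constructing the local profiles $\phi_i$ and then showing that the remainder $\psi_1$ is bounded in the $\|\cdot\|_*$ norm. First I would analyze the model problem \eqref{phi}: the operator $L_0=\Delta-1+pw^{p-1}$ on $\R^2_+$ with Neumann condition has kernel spanned by $\partial w/\partial y_1$ by Lemma \ref{lem1}, so the Fredholm alternative gives solvability once we project out this kernel, which is exactly what the constant $d$ achieves. The exponential decay estimate $\|\phi\|_{**}\le C\|h\|_{**}$ follows from the standard maximum-principle/barrier argument using the weight $e^{-\mu_1|y|}$ (comparison with $e^{-\mu_1|y|}$ away from the core and interior elliptic estimates near the core). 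Applying this with $h=H_i$, and noting that $H_i$ is the dominant part of the error $E$ localized near $P_i$ — consisting of the neighboring-spike interaction terms $w(y^i-\tfrac{s_{i\pm1}-s_i}{\ve}e_1)$ (multiplied by $pw^{p-1}$) together with the $O(\ve)$ boundary-curvature term $\ve v_i^{(1)}$ — we get each $\phi_i$ with a quantitative decay bound; in particular $\|\chi_\ve(z-P_i)\phi_i\|_*$ is of the order of $w(\tfrac{|s_i-s_{i-1}|}{\ve})+\ve$, which is $O(\ve)$ by \eqref{equation101}.

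Next I would substitute the ansatz into the projected equation \eqref{nonlinear}. Define $\ve^2\psi_1:=\psi-\sum_i\chi_\ve(z-P_i)\phi_i$. Since $\psi={\mathcal A}(E+N(\psi))$ with $\|\psi\|_*\le C\ve$ by Proposition \ref{p3}, and $\sum_i\chi_\ve(z-P_i)\phi_i$ is designed to solve the linearized equation with the leading part of $E$ as right-hand side, the key is to show that
\[
L_\ve\Big(\sum_i\chi_\ve(z-P_i)\phi_i\Big)=E_{\text{main}}+O(\ve^2)\ \text{in}\ \|\cdot\|_*,
\]
so that $\psi_1$ solves a projected linear problem $L_\ve(\ve^2\psi_1)=(\text{remainder})+\sum c_i'Z_i$ with remainder of size $O(\ve^2)$ in $\|\cdot\|_*$. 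The error terms to control are: (i) the commutator $[L_\ve,\chi_\ve(\cdot-P_i)]\phi_i$, which is supported in the annulus $R_0\ve|\ln\ve|\le|z-P_i|\le 2R_0\ve|\ln\ve|$ where $\phi_i=O(e^{-\mu_1 R_0|\ln\ve|})=O(\ve^{\mu_1R_0})$, hence negligible for $R_0$ large; (ii) the difference between the full potential $p(\sum_j{\mathcal P}w_j)^{p-1}$ and the single-spike potential $pw(y^i)^{p-1}$ appearing in each local equation, which near $P_i$ costs an extra factor of the neighboring interaction and is thus $O(\ve^{2})$ or smaller; (iii) the part of $E$ not captured by $\sum_i H_i$, namely the $\ve^2$-order boundary terms $v_i^{(2)},v_i^{(3)},\xi_{P_i}$ from Proposition \ref{pro1} and the cross terms in the expansion of $(\sum_j{\mathcal P}w_j)^p$, all of which are $O(\ve^2)$ in $\|\cdot\|_*$; and (iv) the nonlinear term $N(\psi)=O(\|\psi\|_*^2)=O(\ve^2)$ by \eqref{estimateN}. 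Collecting these, the right-hand side for $\ve^2\psi_1$ is $O(\ve^2)$, and the orthogonality $\int\psi_1 Z_i=0$ is inherited (after a negligible correction) from $\int\psi Z_i=0$ together with $\int\chi_\ve(\cdot-P_i)\phi_i\,\partial w(y^i)/\partial y_1=0$.

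Finally I would apply the a priori estimate of Proposition \ref{p1} (the operator ${\mathcal A}$) to conclude $\|\psi_1\|_*\le C\ve^{-2}\cdot O(\ve^2)=C$, which is \eqref{psi}. The main obstacle, I expect, is step (ii)–(iii): carefully bookkeeping all the cross terms produced when expanding $(\sum_j{\mathcal P}_{\Omega_{\ve,P_j}}w(z-P_j))^p$ and the projection errors $h_{P_j}$, and verifying in the weighted $\|\cdot\|_*$ norm that everything beyond $\sum_i H_i$ genuinely sits at order $\ve^2$ uniformly in $z$ — this requires using the precise decay $w(\tfrac{|s_i-s_{i-1}|}{\ve})\le c\ve/|\ln\ve|$ from \eqref{equation101} and Lemma \ref{even} to handle the overlap regions, and choosing $\mu<\mu_1$ and $R_0$ appropriately so that the cut-off and the weight are mutually compatible. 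The construction of $\phi_i$ itself and the contraction/Fredholm arguments are routine given the earlier propositions.
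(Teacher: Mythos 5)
Your overall strategy (solve the local half-space problems for $\phi_i$, subtract $\sum_i\chi_\ve(z-P_i)\phi_i$ from $\psi$, and bound the remainder by the linear theory) is the same in spirit as the paper's, but the two steps you dismiss as bookkeeping are exactly where the paper's proof does its real work, and as written your argument does not close. First, the boundary condition: $\sum_i\chi_\ve(z-P_i)\phi_i$ is \emph{not} in $H^2_N(\Omega_\ve)$. Each $\phi_i$ satisfies $\partial\phi_i/\partial y_2=0$ only on $\partial\R^2_+$ in the straightened coordinates $y^i$, while on $\partial\Omega_\ve$ the normal derivative is $(1+\rho'^2)^{1/2}\partial_\nu=\rho'\partial_{y_1}-(1+\rho'^2)\partial_{y_2}$, so $\partial_\nu\big(\chi_\ve(z-P_i)\phi_i\big)$ is nonzero (of size $O(\ve^2)$ plus cutoff terms). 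Consequently $\psi_1:=\ve^{-2}\big(\psi-\sum_i\chi_\ve(z-P_i)\phi_i\big)$ carries nonzero Neumann data, and neither the projected problem \equ{linear} nor the operator ${\mathcal A}$ of Proposition \ref{p1} applies to it directly. The paper handles this with the explicit correction $\psi_{11}$ solving $\Delta\psi_{11}-\psi_{11}=0$ with exactly this boundary data, and its boundedness is not automatic: it forces the choices $\mu_1>\mu$ and $(\mu_1-\mu)R_0\geq 1$ in the cutoff. Your error list (i)--(iv) only contains the interior commutator, not this boundary mismatch.

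Second, the orthogonality. You say $\int_{\Omega_\ve}\psi_1Z_i=0$ is ``inherited after a negligible correction''; the correction is $\ve^{-2}\int_{\Omega_\ve}\big(\sum_j\chi_\ve(z-P_j)\phi_j+\ve^2\psi_{11}\big)Z_i$ times $Z_i$, and for it to stay bounded in $\|\cdot\|_*$ you must show these projections are $O(\ve^2)$. For $j=i$ this follows from the orthogonality of $\phi_i$ together with Proposition \ref{pro2}, but for the neighbours $j=i\pm1$ a pure size estimate gives only $O\big(\ve\,e^{-\mu_1|P_i-P_{i\pm1}|}\big)=O(\ve^{1+\mu_1(1+o(1))})$, and since the weighted estimate $\|\phi_i\|_{**}\leq C\ve$ requires $\mu_1<p-2$, this exponent can be far below $2$ for general $p>2$. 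The paper reaches $O(\ve^2)$ only through a cancellation argument: $\phi_i$ (and $\phi_{i-1}+\phi_{i+1}$ viewed in the $y^i$ variables) is split into a part even in $y^i_1$, whose pairing with the odd functions $\partial w/\partial y_1$ and $\eta_i$ vanishes, plus a part of size $O(\ve^2)$ controlled by Lemma \ref{even} and the near-equal-spacing constraint \equ{par} built into $\Lambda_k$ --- this is precisely why the configuration space is defined as in \equ{points1}. The same parity/spacing argument is what yields $|d_i|=O(\ve^2)$ and $|c_i|=O(\ve^2)$, which you implicitly use when asserting that the equation for $\psi_1$ has right-hand side $O(\ve^2)$ in $\|\cdot\|_*$. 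Without these two ingredients (the $\psi_{11}$-type boundary correction and the evenness cancellation for the projections, i.e.\ the paper's decomposition $\psi_1=\psi_{11}+\psi_{12}+\psi_{13}$), the proposed proof has a genuine gap.
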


\begin{proof}
First by the definition of $d_i$,
\begin{equation}
d_i=\int_{\R^2_+}H_i\frac{\partial w(y^i)}{\partial y_1}dy
\end{equation}
Then from Lemma \ref{even}, and the evenness of $v_i^{(1)}$ with respect to $y^i_1$,
and the definition of the configuration space (\ref{points1}), we know that  for $i=2,\cdots,k-1$
\begin{equation}
|d_i|\leq C\ve^{-1}||s_{i+1}-s_i|-|s_i-s_{i-1}|\min\{w(\frac{s_i-s_{i+1}}{\ve}),w(\frac{s_i-s_{i-1}}{\ve})\}\leq C\ve^2,
\end{equation}
and for $i=1,k$,
\begin{equation}
|d_1|=O(w(\frac{s_1-s_{2}}{\ve}))=O(\ve^2) \ and \ |d_k|=O(w(\frac{s_k-s_{k-1}}{\ve}))=O(\ve^2).
\end{equation}
Moreover, from (\ref{phi}), we have the following estimate:
\begin{equation}
\|\phi_i\|_{**}\leq C\ve \mbox{ if }p>2+\mu_1.
\end{equation}

Our strategy to estimate $\psi_1$ is to decompose $\psi_1$ into three parts and show that each of them is bounded in $\|\cdot\|_{*}$ as $\ve\to 0$. We write $\psi_1$ as
\begin{equation}
\psi_1=\psi_{11}+\psi_{12}+\psi_{13},
\end{equation}
where $\psi_{11} $ satisfies
\begin{equation}
\left\{\begin{array}{l}
\Delta \psi_{11}-\psi_{11}=0, \mbox{ in }\Omega_{\ve}\\
\frac{\partial \psi_{11}}{\partial \nu}=-\frac{1}{\ve^2}\frac{\partial \sum_{i=1}^k\chi_\ve(z-P_i)\phi_i}{\partial \nu} \mbox{ on }\partial \Omega_\ve.
\end{array}
\right.
\end{equation}
Define $\psi_{12}$ by
\begin{equation}
\psi_{12}=\frac{1}{\ve^2}\sum_{i=1}^ks_iZ_i,
\end{equation}
and $s_i$ is determined by
\begin{equation}
M(s_i)=-\int_{\Omega_\ve}(\sum_{i=1}^k\chi_\ve(z-P_i)\phi_i+\ve^2\psi_{11})Z_i.
\end{equation}
Finally define $\psi_{13}$ to be the solution of the following equation:
\begin{equation}
\left\{\begin{array}{l}
L_\ve(\psi_{13})=\frac{1}{\ve^2}L_\ve(\psi-\sum_{i=1}^k\chi_\ve(z-P_i)\phi_i-\ve^2(\psi_{11}+\psi_{12})) \mbox{ in }\Omega_\ve\\
\frac{\partial \psi_{13}}{\partial \nu}=0 \mbox{ on }\partial \Omega_\ve\\
\int_{\Omega_\ve}\psi_{13}Z_idz=0.
\end{array}
\right.
\end{equation}

Next we will estimate $\psi_{11}, \psi_{12}, \psi_{13}$ term by term.  First we estimate $g_{1\ve}=\frac{1}{\ve^2}\frac{\partial \sum_{i=1}^k\chi_\ve(z-P_i)\phi_i}{\partial \nu} $. By direct calculation
\begin{eqnarray*}
g_{1\ve}&=&\frac{1}{\ve^2}\sum_{i={1}}^k(\chi_\ve(z-P_i)\frac{\partial \phi_i}{\partial \nu}+\phi_i\frac{\partial \chi_\ve(z-P_i)}{\partial \nu})\\
&=&\frac{1}{\ve^2}\sum_{i=1}^k\ve e^{-\mu_1|y-\frac{s_i}{\ve}|}\frac{\partial \chi_\ve(z-P_i)}{\partial \nu}+O(\ve^2)\\
&=&O(\ve^{-2}e^{-(\mu_1-\mu)R_0|\ln\ve|})\sum_{i=1}^ke^{-\mu|z-P_i|}\\
&\leq& C\sum_{i=1}^ke^{-\mu|z-P_i|}
\end{eqnarray*}
if we choose $\mu_1>\mu$ and the cutoff function in such a way that $(\mu_1-\mu)R_0\geq 1$. In the above estimate, we use the definition of $\phi_i$ and the Neumann boundary satisfied by it and the definition of the cut-off function $\chi$. Thus we have that $\|g_{1\ve}\|_*\leq C$, therefore, there exists constant $C>0$, such that
\begin{equation}
\|\psi_{11}\|_*\leq C.
\end{equation}

By the definition of $\psi_{12}$, $\phi_i$ and the estimate on $\psi_{11}$, one can obtain that
\begin{eqnarray*}
&&\int_{\Omega_\ve}(\sum_{j=1}^k\chi_\ve(z-P_j)\phi_j+\ve^2\psi_{11})Z_idz\\
&&=\int_{\Omega_\ve}\chi_\ve(z-P_i)\phi_iZ_i dz+\sum_{j=i-1,i+1}\chi_\ve(z-p_j)\phi_jZ_idz\\
&&+O(\ve^{1+(1+\mu)(1+o(1))})+O(\ve^2)
\end{eqnarray*}

In order to estimate the above term, we first consider a general function which is the solution of the following equation:
\begin{equation}
\left\{\begin{array}{l}
\Delta \phi-\phi+pw(y)^{p-1}\phi\\
=-pw(y)^{p-1}(w(y-q_1e_1)+w(y+q_2e_1)+\ve v^{(1)})+d\frac{\partial w(y)}{\partial y_1}  \mbox{ in }\R^2_+,\\
\frac{\partial \phi}{\partial y_2}=0 \mbox{ on }\partial \R^2_+,\\
\int_{\R^2_+}\phi\frac{\partial w(y)}{\partial y_1}dy=0
\end{array}
\right.
\end{equation}

We can decompose it as
\begin{equation}
\phi=\phi^1+\phi^2,
\end{equation}
where
\begin{equation}
\left\{\begin{array}{l}
\Delta \phi^1-\phi^1+pw(y)^{p-1}\phi^1\\
=-pw(y)^{p-1}(w(y-q_1e_1)+w(y+q_1e_1)+\ve v^{(1)})+d_1\frac{\partial w(y)}{\partial y_1}  \mbox{ in }\R^2_+,\\
\frac{\partial \phi^1}{\partial y_2}=0 \mbox{ on }\partial \R^2_+,\\
\int_{\R^2_+}\phi^1\frac{\partial w(y)}{\partial y_1}dy=0
\end{array}
\right.
\end{equation}
and
\begin{equation}
\left\{\begin{array}{l}
\Delta \phi^2-\phi^2+pw(y)^{p-1}\phi^2\\
=-pw(y)^{p-1}(w(y+q_2e_1)-w(y+q_1e_1))+d_2\frac{\partial w(y)}{\partial y_1}  \mbox{ in }\R^2_+,\\
\frac{\partial \phi^2}{\partial y_2}=0 \mbox{ on }\partial \R^2_+,\\
\int_{\R^2_+}\phi^2\frac{\partial w(y)}{\partial y_1}dy=0
\end{array}
\right.
\end{equation}
where $d_i$ are defined such that the right hand sides of the above equations are orthogonal to $\frac{\partial w}{\partial y_1}$ in $L^2$ norm
It is easy to see that $\phi^1$ is even in $y_1$ and by Lemma \ref{even},  $\phi^2$ satisfies
\begin{equation*}
\|\phi^2\|_{**}\leq Cw(q_1)|q_1-q_2|,
\end{equation*}
if $|q_1-q_2|<<|q_1|$ and $|q_1|\to \infty$.

Using the above estimates, we can decompose $\phi_i$ as
\begin{equation}
\phi_i=\phi_{i,1}+\phi_{i,2}
\end{equation}
and $\phi_{i,1} $ is even in $y^i_1$ and
\begin{equation}
\|\phi_{i,2}\|_{**}\leq C||\frac{s_i-s_{i-1}}{\ve}|-|\frac{s_{i}-s_{i+1}}{\ve}||\min\{w(\frac{s_i-s_{i-1}}{\ve}),w(\frac{s_i-s_{i+1}}{\ve})\}\leq C\ve^2.
\end{equation}
Then by the above estimate and the decomposition in Proposition \ref{pro2}, we have

\begin{equation}\label{sym}
\int_{\Omega_\ve}\chi_\ve(z-P_i)\phi_iZ_i dz=O(\ve^2),
\end{equation}
and similar to the decomposition of $\phi_i$, one can also decompose $\phi_{i-1}+\phi_{i+1}$ as an even function of $y^i_1$ and an $O(\ve^2)$ function, so we get that
\begin{eqnarray*}
&&\sum_{j=i-1,i+1}\int_{\Omega_\ve}\chi_\ve(z-P_j)\phi_jZ_idz\\
&&=\int_{\R^2_+}(\phi_{i-1}+\phi_{i+1})\frac{\partial w(y^i)}{\partial y_1}dy+O(\ve^2)\\
&&=O(\ve^2)
\end{eqnarray*}
Moreover, since $|s_1-s_2|=2(1+o(1))|\ve\ln\ve|$ and $|s_{k-1}-s_k|=2(1+o(1))|\ve\ln\ve|$, one can get that
\begin{equation}
\int_{\Omega_\ve}\chi_\ve(z-P_2)\phi_2Z_1dz=O(\ve^2), \int_{\Omega_\ve}\chi_\ve(z-P_{k-1})\phi_{k-1}Z_kdz=O(\ve^2).
\end{equation}
Thus we have
\begin{equation}
|s_i|\leq C\ve^2.
\end{equation}
Next we estimate $\psi_{13}$. Denote by
$$f_{\ve}=L_\ve(\psi-\sum_{i=1}^k\chi_\ve(z-p_i)\phi_i-\ve^2(\psi_{11}+\psi_{12})) .$$

\noindent
Claim:
\begin{equation}
\|f_\ve\|_*\leq C\ve^2.
\end{equation}

\noindent
Proof of the Claim:

By the definition of $f_{\ve}$, we have
\begin{eqnarray*}
&&f_\ve(z)=L_\ve(\psi-\sum_{i=1}^k\chi_\ve(z-P_i)\phi_i-\ve^2(\psi_{11}-\psi_{12})) \\
&&=E+N(\psi)+\sum_{i}c_iZ_i-\sum_i L_\ve(\chi_\ve(z-P_i)\phi_i)-\ve^2L(\psi_{11}+\psi_{12})\\
&&=(\sum_i P_{\Omega_{\ve,P_i}}w(z-P_i))^p-\sum_i w(z-P_i)^p+N(\psi)+\sum_{i}c_iZ_i\\
&&-\sum_i \chi_\ve(z-P_i)(\Delta_y \phi_i-\phi_i+p((\sum_i P_{\Omega_\ve}w(z-P_i))^{p-1}+O(\ve))\phi_i)\\
&&+\sum_i(2\nabla \phi_i \nabla(\chi_\ve(z-P_i))+\phi_i\Delta \chi_\ve(z-P_i))-\ve^2L_\ve(\psi_{11}+\psi_{12})\\
&&=(\sum_i P_{\Omega_{\ve,P_i}}w(z-P_i))^p-\sum_i w(z-P_i)^p+N(\psi)+\sum_{i}c_iZ_i\\
&&-\sum_i \chi_\ve(z-P_i)(p((\sum_i P_{\Omega_{\ve,P_i}}w(z-P_i))^{p-1}-w(y-P_i)^{p-1})\phi_i\\
&&-pw(y-P_i)^{p-1}(w(y-P_{i-1})+w(y-P_{i+1})+\ve v_{1i}(y))+d_iZ_i)\\
&&+\sum_i O(\ve)\phi_i+\sum_i(2\nabla \phi_i \nabla(\chi_\ve(z-p_i))+\phi_i\Delta \chi_\ve(z-P_i))-\ve^2L_\ve(\psi_{11}+\psi_{12}).
\end{eqnarray*}

From the definition and estimates of $\phi_i$, $\psi_{11}, \psi_{12}$, $\chi$, and the configuration space, we know that $|c_i|=O(\ve^2)$, so
\begin{equation*}
\|f_\ve\|_*\leq  C\ve^2.
\end{equation*}

By the a priori estimate, we know that
\begin{equation*}
\|\psi_{13}\|_*\leq C,
\end{equation*}
thus we have
\begin{equation*}
\|\psi_1\|_*\leq C.
\end{equation*}
We thus finish the proof.

\end{proof}

\medskip
Given  points $P_j$  defined by \equ{points1}, Proposition
\ref{p3} guarantees the existence (and gives estimates) of a unique
solution $\psi$, $c_i$, $i=1, \ldots , k$, to Problem \equ{nonlinear}.  It is clear then
that the function $u= U + \psi$ is an exact solution to our problem
\equ{p}, with the required properties stated in Theorem \ref{teo} if
we show that there exists a configuration for the points $P_j$  that gives all the constants $c_i$  in \equ{nonlinear}
equal to zero. In order to do so we first need to find the correct
conditions on the points to get $c_i=0$. This condition is
naturally given by projecting in $L^2 (\Omega_\ve )$ the equation in
\equ{nonlinear} into the space spanned by $Z_i$, namely by
multiplying the equation in \equ{nonlinear} by $Z_i$  and
integrate all over $\Omega_\ve$. We will do it in details in the next final Section.

\setcounter{equation}{0}
\section{The Reduced Problem}\label{sec6}
In this section, we keep the notations and assumptions in the previous sections. As explained in the previous section, we have obtained a solution $u=\sum_{i=1}^kP_{\Omega_{\ve,P_i}}w(z-P_i)+\sum_{i=1}^k\chi_\ve(z-P_i)\phi_i+\ve^2\psi_1$ of the following equation
\begin{equation}\label{e501}
\left\{\begin{array}{l}
\Delta u-u+u^p=\sum_{i=1}^k c_i Z_i \mbox{ in }\Omega_\ve\\
\frac{\partial u}{\partial \nu}=0 \mbox{ on }\partial \Omega_\ve
\end{array}
\right.
\end{equation}

In this section, we are going to solve $c_i=0$ for all $i$ by adjusting the position of the spikes, i.e. $P_i$. First, multiplying the above equation (\ref{e501}) by $Z_i, i=1,\cdots, k$ and integrating over $\Omega_\ve$, we get that
\begin{equation}
M\left[\begin{array}{c}
c_1\\
c_2\\
\vdots\\
c_k
\end{array}
\right]=\left[\begin{array}{c}
\int_{\Omega_\ve}(\Delta u-u+u^p)Z_1\\
\int_{\Omega_\ve}(\Delta u-u+u^p)Z_2\\
\vdots\\
\int_{\Omega_\ve}(\Delta u-u+u^p)Z_k
\end{array}
\right]
\end{equation}

Recall that $M$ is invertible, so $c_i=0, i=1,\cdots,k$ is reduced to solve the following system:
\begin{equation}\label{reduce}
\left[\begin{array}{c}
\int_{\Omega_\ve}(\Delta u-u+u^p)Z_1\\
\int_{\Omega_\ve}(\Delta u-u+u^p)Z_2\\
\vdots\\
\int_{\Omega_\ve}(\Delta u-u+u^p)Z_k
\end{array}
\right]=0.
\end{equation}

We have the following estimates:

\begin{lemma}\label{lemma601}
Under the assumption of Proposition \ref{p3}, for $\ve$ small enough, the following expansion holds:
\begin{equation}
\int_{\Omega_\ve}(\Delta u-u+u^p)Z_1dz=-\Psi(\frac{s_1-s_2}{\ve})-\ve^2\nu_2H'(\gamma(s_1))+O(\ve^3),
\end{equation}
and for $i=2,\cdots,k-1$,
\begin{equation}
\int_{\Omega_\ve}(\Delta u-u+u^p)Z_idz=\Psi(\frac{s_i-s_{i-1}}{\ve})-\Psi(\frac{s_i-s_{i+1}}{\ve})-\ve^2\nu_2H'(\gamma(s_i))+O(\ve^3)
\end{equation}
and
\begin{equation}
\int_{\Omega_\ve}(\Delta u-u+u^p)Z_kdz=\Psi(\frac{s_k-s_{k-1}}{\ve})-\ve^2\nu_2H'(\gamma(s_k))+O(\ve^3).
\end{equation}
where $\nu_2>0$ is a constant defined in (\ref{curvature}).
\end{lemma}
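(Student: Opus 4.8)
The plan is to compute the $L^2$-projection $\int_{\Omega_\ve}(\Delta u - u + u^p)Z_i$ by substituting the decomposition $u = \sum_j {\mathcal P}_{\Omega_{\ve,P_j}}w(z-P_j) + \sum_j \chi_\ve(z-P_j)\phi_j + \ve^2\psi_1$ from Proposition of Section \ref{sec5}. Writing $W_j = {\mathcal P}_{\Omega_{\ve,P_j}}w(z-P_j)$, I would expand $S(u) = S(\sum_j W_j) + L_\ve(\sum_j \chi_\ve\phi_j) + \ve^2 L_\ve(\psi_1) + N(\text{lower order})$, and then test against $Z_i$. The main terms are: (i) $\int_{\Omega_\ve} E Z_i$, the projection of the error of the sum of spikes; (ii) $\int_{\Omega_\ve} L_\ve(\chi_\ve\phi_i) Z_i$ together with the neighboring contributions from $\phi_{i\pm1}$; and (iii) the genuinely higher-order remainders involving $\psi_1$, $N(\psi)$, and $c_i Z_i$ cross terms, all of which should be $O(\ve^3)$ in view of $\|\psi_1\|_* \le C$, $\|\psi\|_* \le C\ve$, $|c_i| = O(\ve^2)$, and the exponential localization of $Z_i$.

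The heart of the computation is the leading interaction term and the curvature term. For the interaction, I would use that $W_j \approx w(z-P_j) + h_{P_j}$ with $h_{P_j} = -(\ve v_j^{(1)} + \ve^2(v_j^{(2)}+v_j^{(3)}))\chi_\ve + \ve^3\xi_{P_j}$ from Section \ref{sec3}, and $Z_i = \frac{\partial}{\partial\tau_{P_i}}[w(z-P_i)+h_{P_i}]$. Then $\int_{\Omega_\ve}(S(\sum_j W_j)) Z_i$ splits, after integration by parts against $L_0 = \Delta - 1 + pw^{p-1}$, into a sum over $j \ne i$ of terms of the form $\int p w(y^i)^{p-1} w(y^i - \frac{s_j-s_i}{\ve}e_1)\frac{\partial w}{\partial y_1}\,dy$. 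Only the nearest neighbors $j = i\pm 1$ survive to the order we need (using \eqref{equation101} and the exponential decay to discard $|i-j|\ge 2$), and these produce precisely $\mp\Psi(\frac{|s_i - s_{i\pm1}|}{\ve})$ after accounting for orientation; the term with a reflected spike $w(y+q_2 e_1)$ appearing near an endpoint is controlled by Lemma \ref{even}. The curvature term $-\ve^2\nu_2 H'(\gamma(s_i))$ arises from pairing the $\ve$-order boundary correction $\ve v_i^{(1)}$ (an even function of $y_1^i$) against the $\ve$-order part of $Z_i$ coming from $\partial_{\tau}h_{P_i}$, i.e. the $\eta$-term of Proposition \ref{pro2}, using that $\eta$ is odd in $y_1$, together with the Taylor expansion $H(\gamma(s)) = H(P) + \ve^{-1}(\text{arc length})H'(P)+\cdots$; the constant $\nu_2$ is then identified by an explicit integral $\nu_2 = \int_{\R^2_+} (\cdots)$, which I would label $(\ref{curvature})$ as referenced.

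The contribution $\int_{\Omega_\ve} L_\ve(\chi_\ve(z-P_i)\phi_i) Z_i$ requires care: integrating by parts moves $L_\ve$ onto $Z_i$, and since $Z_i$ is an approximate kernel ($L_\ve Z_i$ is small), plus the orthogonality $\int \phi_i \frac{\partial w}{\partial y_1} = 0$ built into the definition of $\phi_i$, the diagonal piece is $O(\ve^3)$; the off-diagonal pieces $\int L_\ve(\chi_\ve\phi_{i\pm1})Z_i$ are handled by the even/odd decomposition $\phi_{i\pm1} = \phi_{i\pm1,1} + \phi_{i\pm1,2}$ established in Section \ref{sec5}, where $\phi_{\cdot,1}$ is even in $y_1^i$ (hence orthogonal to the odd leading part of $Z_i$) and $\|\phi_{\cdot,2}\|_{**} \le C\ve^2$. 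The main obstacle I anticipate is bookkeeping the $\ve^2$-order terms carefully enough to be sure nothing of order $\ve^2$ is missed beyond the curvature term and nothing spuriously appears — in particular tracking the interplay between the $\ve$-order boundary corrections $v^{(1)}$, the cutoff errors on $\chi_\ve$ (which are $O(\ve^{R_0})$ and harmless), and the second-order curvature corrections $v^{(2)}, v^{(3)}, \eta_1$, and verifying that all genuinely contribute only at $O(\ve^3)$ or combine into the stated $H'$ term. Once these cancellations are organized, collecting terms yields exactly the three displayed formulas, with the endpoint cases $i = 1, k$ differing only in that one neighbor is absent.
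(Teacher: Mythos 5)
Your overall bookkeeping (splitting off the $\ve^2\psi_1$, $N(\psi)$ and $c_iZ_i$ contributions, reducing to $\int(\Delta U-U+U^p)Z_i$ plus the $\phi_j$ terms handled by the even/odd decomposition and the orthogonality built into $\phi_i$) follows the same lines as the paper. But the step you call the heart of the computation — the origin of the term $-\ve^2\nu_2H'(\gamma(s_i))$ — is misidentified, and your proposed mechanism would in fact produce zero. You claim the curvature term comes from pairing the $O(\ve)$ boundary correction $\ve v_i^{(1)}$ (even in $y_1^i$) with the $O(\ve)$ part $\ve\eta$ of $Z_i$ from Proposition \ref{pro2} ($\eta$ odd in $y_1$). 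The resulting integrand $pw^{p-1}\,v_i^{(1)}\,\eta$ is (even)$\times$(even)$\times$(odd) in $y_1$, hence odd, and its integral over the half–space $\R^2_+$ vanishes at leading order — exactly the kind of cancellation the parity statements in Section \ref{sec3} are recorded for. Moreover, both $v^{(1)}$ and $\eta$ are generated by $\rho''(0)=H(\gamma(s_i))$, so even if some remainder survived it would be proportional to $H$ or $H^2$, never to the \emph{derivative} $H'$. There is no Taylor expansion of $H(\gamma(s))$ along the segment involved at this stage.

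The term $H'(\gamma(s_i))=\rho^{(3)}(0)$ enters only through the cubic term $\frac16\rho^{(3)}(0)\ve^2z_1^3$ in the expansion of the boundary graph, i.e.\ through the $\ve^2 v_i^{(3)}$ piece of $h_{P_i}$ in Proposition \ref{pro1}. Since $v^{(3)}$ is \emph{odd} in $y_1$, its pairing with the leading odd part $\frac{\partial w}{\partial y_1}$ of $Z_i$ survives: expanding
\begin{equation*}
\Big(w(z-P_i)+\ve v_i^{(1)}+\ve^2(v_i^{(2)}+v_i^{(3)})+\textstyle\sum_{j\neq i}{\mathcal P}_{\Omega_{\ve,P_j}}w(z-P_j)+O(\ve^3)\Big)^p-\sum_j w(z-P_j)^p
\end{equation*}
and discarding the even-in-$y_1$ pieces ($v^{(1)}$, $v^{(2)}$) against $\frac{\partial w}{\partial y_1}$, the $\ve^2$ contribution is $\ve^2\int_{\R^2_+}pw^{p-1}\frac{\partial w}{\partial y_1}v_i^{(3)}\,dy$, which after writing $pw^{p-1}\frac{\partial w}{\partial y_1}=-(\Delta-1)\frac{\partial w}{\partial y_1}$ and integrating by parts to the boundary gives
\begin{equation*}
-\frac13\int_{\R}\Big(\frac{w'(|y|)}{|y|}\Big)^2\rho^{(3)}(P_i)\,y_1^4\,dy_1=-\nu_2H'(\gamma(s_i)),\qquad \nu_2=\frac13\int_{\R}\Big(\frac{w'}{|y|}\Big)^2y_1^4\,dy_1>0,
\end{equation*}
which is the identity (\ref{curvature}) you left unspecified. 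Without this identification your argument cannot produce the stated $H'$ term, so the central claim of the lemma is not established as written; the nearest-neighbour interaction part giving $\Psi(\frac{|s_i-s_{i\pm1}|}{\ve})$ and the $O(\ve^3)$ remainders are otherwise in line with the paper's proof.
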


\medskip

\begin{proof}
First, by direct calculation, one can get the following expansion:
\begin{eqnarray*}
&&\Delta u-u+u^p\\
&&=\Big[\Delta (U+\sum_{i=1}^k\chi_\ve(z-P_i)\phi_i)-(U+\sum_{i=1}^k\chi_\ve(z-P_i)\phi_i)+(U+\sum_i\chi_\ve(z-P_i)\phi_i)^p\Big]\\
&&+\Big[\ve^2(\Delta\psi_1-\psi_1+p(U+\sum_{i=1}^k\chi_\ve(z-P_i)\phi_i)^{p-1}\psi_1)\Big]\\
&&+\Big[(U+\sum_{i=1}^k\chi_\ve(z-P_i)\phi_i+\ve^2\psi_1)^p-(U+\sum_{i=1}^k\chi_\ve(z-P_i)\phi_i)^p-\\
&&p(U+\sum_{i=1}^k\chi_\ve(z-P_i)\phi_i)^{p-1}\ve^2\psi_1\Big]\\
&&:=I_1+I_2+I_3.
\end{eqnarray*}

Next we calculate $I_1$ to $I_3$ term by term. First from the estimate on $\psi_1$ in (\ref{psi})
\begin{eqnarray}\label{i2}
&&\int_{\Omega_\ve}I_2Z_i=\ve^2\int_{\Omega_\ve}(\Delta \psi_1-\psi_1+p(U+\sum_i\chi_\ve(z-p_i)\phi_i)^{p-1}\psi_1)Z_i\nonumber\\
&&=\ve^2\int_{\Omega_\ve}-pw(z-P_i)^{p-1}\frac{\partial w(z-P_i)}{\partial \tau}\psi_1+p(U+\sum_i\chi_\ve(z-P_i)\phi_i)^{p-1}Z_i\psi_1\nonumber\\
&&=\ve^2\int_{\Omega_\ve}p(p-1)w(z-P_i)^{p-2}\frac{\partial w(z-P_i)}{\partial \tau}\psi_1(\sum_{j\neq i}\frac{\partial w(z-P_j)}{\partial \tau}+O(\ve))dz\nonumber\\
&&=O(\ve^3).
\end{eqnarray}
Moreover
\begin{eqnarray}\label{i3}
\int_{\Omega_\ve}I_3Z_i&=&\int_{\Omega_\ve}\Big[(U+\sum_i\chi_\ve(z-P_i)\phi_i+\ve^2\psi_1)^p-(U+\sum_i\chi_\ve(z-P_i)\phi_i)^p\nonumber\\
&&-p(U+\sum_i\chi_\ve(z-P_i)\phi_i)^{p-1}\ve^2\psi_1\Big]Z_i\nonumber\\
&\leq & C\int_{\Omega_\ve}\ve^4|\psi_1|^2|Z_i|
=O(\ve^3).
\end{eqnarray}
Next from the equation satisfied by $\phi_i$ and the definition of the cutoff function $\chi$, we get that
\begin{eqnarray}\label{i1i1}
\int_{\Omega_\ve}I_1Z_i&=&\int_{\Omega_\ve}(\Delta U-U+U^p)Z_i+\sum_j\int_{\Omega_\ve}\chi_\ve(z-P_j)(\Delta\phi_j-\phi_j+pU^{p-1}\phi_j)Z_i\nonumber\\
&&+[(U+\sum_i\chi_\ve(z-P_i)\phi_i)^p-U^p-pU^{p-1}\sum_i \chi_\ve(z-p_i)\phi_i]Z_i+O(\ve^3)\nonumber\\
&=&\int_{\Omega_\ve}(\Delta U-U+U^p)Z_i+I_{11}+I_{12}+O(\ve^3).
\end{eqnarray}
Similar to the estimate in (\ref{sym}), using the equation satisfied by $\phi_i$, we have for $i=2,\cdots,k-1$
\begin{eqnarray*}
&&\sum_j\int_{\Omega_\ve}\chi_\ve(z-P_j)(\Delta\phi_j-\phi_j+pU^{p-1}\phi_j)Z_i\\
&&=\int_{\Omega_\ve}\chi_\ve(z-P_i)(\Delta\phi_j-\phi_j+pU^{p-1}\phi_j)Z_i\\
&&+\sum_{j\neq i}\int_{\Omega_\ve}\chi_\ve(z-P_j)(\Delta\phi_j-\phi_j+pU^{p-1}\phi_j)Z_i\\
&&=\int_{\Omega_\ve}p(U^{p-1}-w_i^{p-1})\phi_iZ_i+O(\ve^3)\\
&&+\sum_{j=i-1,i+1}\int_{\Omega_\ve}\chi(z-P_j)(\Delta\phi_j-\phi_j+pU^{p-1}\phi_j)Z_i+O(\ve^3)\\
&&=\int_{\Omega_\ve}p(p-1)w_i^{p-2}(w_{i+1}+w_{i-1})\phi_iZ_i\\
&&+\sum_{j=i-1,i+1}\int_{\Omega_\ve}\chi_\ve(z-P_j)(\Delta\phi_j-\phi_j+pU^{p-1}\phi_j)Z_i+O(\ve^3)\\
&&=O(\ve||\frac{s_i-s_{i-1}}{\ve}|-|\frac{s_i-s_{i+1}}{\ve}||\min\{w(\frac{s_i-s_{i-1}}{\ve}), w(\frac{s_i-s_{i+1}}{\ve})\})+O(\ve^3)\\
&&=O(\ve^3)
\end{eqnarray*}
and similarly we can always decompose
\begin{equation*}
\sum_{j=1}^k\chi_\ve(z-P_j)\phi_j=\ve\psi_{1,i}+O(\ve^2)
\end{equation*}
where $\psi_{1,i}$ is  a function even in $y^i_1$, and by Proposition \ref{pro2}, we have
\begin{equation*}
Z_i=\frac{\partial w(y^i)}{\partial y_1}+\ve\eta_i+O(\ve^2)
\end{equation*}
where $\eta_i$ is odd in $y^i_1$. Thus we have
\begin{eqnarray*}
I_{12}&&\leq C\int_{\Omega_\ve}p(p-2)w_i^{p-2}(\sum_{j=1}^k\chi_\ve(z-P_j)\phi_j)^2Z_idz+O(\ve^3)\\
&&\leq C\ve^3.
\end{eqnarray*}
For the case $i=1,k$, recall that $w(\frac{s_1-s_2}{\ve}), w(\frac{s_k-s_{k-1}}{\ve})=O(\ve^2)$, one can also get that
\begin{equation}\label{i1i2}
I_{11}+I_{12}=O(\ve^3).
\end{equation}
Thus we have the following:
\begin{equation*}
\int_{\Omega_\ve}I_1Z_idz=\int_{\Omega_\ve}(\Delta U-U+U^p)Z_i+O(\ve^3).
\end{equation*}

Next for $i=2,\cdots,k-1$
\begin{eqnarray}\label{i11}
&&\int_{\Omega_\ve}(\Delta U-U+U^p)Z_i\\
&&=\int_{\Omega_\ve}\Big[(\sum_i P_{\Omega_{\ve,P_i}}w(z-P_i))^p-\sum_i w(z-P_i)^p\Big]Z_i\nonumber\\
&&=\int_{\Omega_\ve}\Big[(w(z-P_i)+\ve v_i^{(1)}+\ve^2(v_i^{(2)}+v_i^{(3)})\nonumber\\
&&+\sum_{j\neq i}P_{\Omega_{\ve,P_i}}w(z-P_j)+O(\ve^3))^p-\sum_iw(z-P_i)^p\Big]Z_i\nonumber\\
&&=\int_{\Omega_\ve}pw(z-P_i)^{p-1}\Big(\ve v_i^{(1)}+\ve^2(v_i^{(2)}+v_i^{(3)})\nonumber\\
&&+w(z-P_{i-1})+w(z-P_{i+1})\Big)\frac{\partial w(z-P_i)}{\partial \tau}+O(\ve^3)\nonumber\\
&&=\int_{\R^2_+}pw(y)(w(y-\frac{s_{i-1}-s_i}{\ve}e_1)+w(y-\frac{s_{i+1}-s_i}{\ve}e_1))\frac{\partial w(y)}{\partial y_1}\nonumber\\
&&+\ve^2\int_{\R^2_+}pw(y)^{p-1}\frac{\partial w(y)}{\partial y_1}v_i^{(3)}+O(\ve^3)\nonumber
\end{eqnarray}

Similarly, one has for $i=1,k$,
\begin{eqnarray*}
&&\int_{\Omega_\ve}(\Delta U-U+U^p)Z_1\\
&&=\int_{\R^2_+}pw(y)w(y-\frac{s_{2}-s_1}{\ve}e_1)\frac{\partial w(y)}{\partial y_1}\nonumber\\
&&+\ve^2\int_{\R^2_+}pw(y)^{p-1}\frac{\partial w(y)}{\partial y_1}v_1^{(3)}+O(\ve^3),
\end{eqnarray*}
and
\begin{eqnarray*}
&&\int_{\Omega_\ve}(\Delta U-U+U^p)Z_k\\
&&=\int_{\R^2_+}pw(y)w(y-\frac{s_{k-1}-s_k}{\ve}e_1)\frac{\partial w(y)}{\partial y_1}\nonumber\\
&&+\ve^2\int_{\R^2_+}pw(y)^{p-1}\frac{\partial w(y)}{\partial y_1}v_k^{(3)}+O(\ve^3).
\end{eqnarray*}

Next by the definition of $v_i^{(3)}$, we can get that
\begin{eqnarray}\label{curvature}
\int_{\R^2_+}pw(y)^{p-1}\frac{\partial w(y)}{\partial y_1}v_i^{(3)}dy&=&\int_{\R^2_+}-(\Delta -1)\frac{\partial w(y)}{\partial y_1}v_i^{(3)}\nonumber\\
&=-&\int_{\partial \R^2_+}\frac{\partial w(y)}{\partial y_1}\frac{\partial v_i^{(3)}}{\partial y_2}-v_i^{(3)}\frac{\partial }{\partial y_2}\frac{\partial w(y)}{\partial y_1}dy\nonumber\\
&=&-\frac{1}{3}\int_{\R}(\frac{w'(|y|)}{|y|})^2\rho^{(3)}(P_i)y_1^4dy_1\nonumber\\
&=&-\nu_2 \rho^{(3)}(P_i)=-\nu_2 H'(\gamma(s_i)),
\end{eqnarray}
where $\nu_2=\frac{1}{3}\int_{\R}(\frac{w'}{|y|})^2y_1^4>0$ is a positive constant.

Recall that the interaction function is defined by
\begin{equation}\label{Psi}
\Psi(s)=-\int_{\R^2_+}pw(y-(s,0))w(y)^{p-1}\frac{\partial w(y)}{\partial y_1},
\end{equation}

Combining (\ref{i1i1}), (\ref{i1i2}),(\ref{i11}), (\ref{curvature}) and (\ref{Psi}), we know that
\begin{eqnarray}\label{i111i}
\int_{\Omega_\ve}I_1Z_1 dz=-\Psi(|\frac{s_1-s_2}{\ve}|)-\ve^2 \nu_2H'(\gamma(s_1))+O(\ve^3)
\end{eqnarray}
and for $i=2,\cdots,k-1$
\begin{eqnarray}\label{i1111}
\int_{\Omega_\ve}I_1Z_i dz=\Psi(|\frac{s_i-s_{i-1}}{\ve}|)-\Psi(|\frac{s_i-s_{i+1}}{\ve}|)-\ve^2\nu_2 H'(\gamma(s_i))+O(\ve^3)
\end{eqnarray}
and
\begin{eqnarray}\label{i111k}
\int_{\Omega_\ve}I_1Z_k dz=\Psi(|\frac{s_k-s_{k-1}}{\ve}|)-\ve^2 \nu_2H'(\gamma(s_i))+O(\ve^3).
\end{eqnarray}
The results follows from (\ref{i2}), (\ref{i3}) and (\ref{i111i})-(\ref{i111k}).

\end{proof}

From Lemma \ref{lemma601},  the problem (\ref{reduce}) is reduced to the following system:
\begin{equation*}
\left\{\begin{array}{l}
\Psi_1(|\frac{s_1-s_2}{\ve}|)+\ve^2H'(\gamma(s_1))=O(\ve^3),\\
\Psi_1(|\frac{s_3-s_2}{\ve}|)-\Psi(|\frac{s_2-s_1}{\ve}|)+\ve^2H'(\gamma(s_2))=O(\ve^3),\\
\ \ \ \ \ \   \ \  \vdots\\
\Psi_1(|\frac{s_k-s_{k-1}}{\ve}|)-\Psi(|\frac{s_{k-1}-s_{k-2}}{\ve}|)+\ve^2H'(\gamma(s_{k-1}))=O(\ve^3),\\
-\Psi_1(|\frac{s_k-s_{k-1}}{\ve}|)+\ve^2H'(\gamma(s_k))=O(\ve^3).
\end{array}
\right.
\end{equation*}
where we denote by
\begin{equation}
\Psi_1(s)=\nu_2^{-1}\Psi(s).
\end{equation}
\medskip

By summing up the first $i$ equations, one has
\begin{equation}\label{reducedproblem}
\left\{\begin{array}{l}
\Psi_1(\frac{s_{i+1}-s_i}{\ve})+\sum_{j=1}^i\ve^2 H'(\gamma(s_j))=O(i\ve^3) \mbox{ for }i=1,\cdots,k-1,\\
\sum_{i=1}^k\ve^2H'(\gamma(s_i))=O(k\ve^3).
\end{array}
\right.
\end{equation}

\setcounter{equation}{0}
\section{Solving the nonlinear system}\label{sec7}

Our aim in the rest of this paper is to find a solution $\{s_i\}$ to the non-linear system (\ref{reducedproblem}) in (\ref{points1}).

\medskip

Observe that the linearized matrix of the above system at main order is degenerate, thus the terms containing $H'(\gamma(s))$ will play an important role. We will explain how we solve system (\ref{reducedproblem}). The novelty of this paper is to consider the above system as a discretization of an ODE. In order to explain this idea, we first introduce some notations.

\medskip

Let
\begin{equation*}
s=G(b)
\end{equation*}
be the solution of $\Psi_1(s)=b$. Since $\Psi_1(s)=C_n s^{-\frac{1}{2}}e^{-s}(1+o(1))$ as $s\to \infty$, using this asymptotic behaviour of $\Psi_1$, one has the following:
\begin{equation}
G(b)=-(1+O(\frac{\ln(-\ln b)}{\ln b}))\ln b, \mbox{ as } b\to 0.
\end{equation}

Then the above reduced system (\ref{reducedproblem}) is equivalent to the following system:
\begin{equation}\label{e3}
\left\{\begin{array}{l}
s_{i+1}-s_i=\ve G(-\sum_{j=1}^i\ve^2 H'(\gamma(s_j))+O(\ve^3 i)), \mbox{ for }i=1,\cdots,k-1\\
s_k-s_{k-1}=\ve G(\ve^2H'(\gamma(s_k))+O(\ve^3k)).
\end{array}
\right.
\end{equation}

Let $h=-\ve\ln\ve$ be the boot size, if we denote by $s_i=x(t_i)$ where $t_i=(i-1)h$, then from the above system (\ref{e3}),
\begin{equation}\label{e16}
\left\{\begin{array}{l}
\frac{x(t_{i+1})-x(t_i)}{h}=-\frac{1}{\ln\ve}G(-\frac{\ve}{\ln\ve}(-\sum_{j=1}^iH'(\gamma(x(t_j)))h)+O(\ve^3i)),\\
\frac{x(t_k)-x(t_{k-1})}{h}=-\frac{1}{\ln\ve}G(\ve^2H'(\gamma(x(t_k)))+O(\ve^3k)).
\end{array}
\right.
\end{equation}

In order the solve the above system, we consider the limiting case of the above system, i.e. view $\frac{x(t_{i+1})-x(t_i)}{h}$ as $x'(t)$ and $\sum_{j=1}^iH'(\gamma(x(t_j)))h$ as $\int_{0}^tH'(\gamma(x(t)))dt$, and introduce the following ODE:
\begin{equation}\label{ode}
\left\{\begin{array}{l}
\frac{dx}{dt}=-\frac{1}{\ln\ve}G(\frac{\ve}{\ln\ve}\rho(t)),\\
\frac{d\rho}{dt}=H'(\gamma(x(t))),\\
\rho(0)=0,  \ \rho(b_\ve)=\rho_b,\\
x'(b_\ve)=-\frac{1}{\ln\ve}G(\ve^2H'(\gamma(x(b_\ve)))),
\end{array}
\right.
\end{equation}
where $b_\ve=(k-1)h=[\frac{b}{h}]h=b+O(h)$.

\medskip

One can see that the above second order ODE has three initial conditions. Besides the two end point initial values,  there is an extra condition, i.e. the last equation of (\ref{ode}), which in fact comes from the last equation of (\ref{e3}). This ODE with extra initial condition is not always solvable. It turns out that this extra condition corresponds to some balancing condition of the curvature of the segment $\gamma$. In order to solve this ODE, we need assumption $(H_1)$ on $\gamma$. For this ODE, we have the following existence result:
\begin{lemma}
Under the assumption $(H_1)$, there exists $\ve_0>0$, such that for every $\ve<\ve_0$, there exist  $\rho_b=\rho_b(\ve)<0$, such that the above ODE (\ref{ode}) is solvable. Moreover, $\rho_b$ satisfies the following asymptotic behaviour:
\begin{equation}
\rho_b=-(H'(\gamma(b_\ve))+O(\frac{\ln(-\ln\ve)}{\ln\ve}))h.
\end{equation}
\end{lemma}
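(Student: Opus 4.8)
The plan is to treat \eqref{ode} as a shooting problem in the single scalar parameter $\rho_b$ (equivalently, in $\rho'(0) =: a$, the unknown initial slope of $\rho$; the two are interchangeable once the flow is solved forward). First I would rescale: set $\mu(t) = \frac{\ve}{\ln\ve}\rho(t)$ and observe that, since $G(b) = -(1+o(1))\ln b$ as $b\to 0$, the first equation reads $x'(t) = -\frac{1}{\ln\ve}G(\mu(t)) = 1 + O\!\big(\frac{\ln(-\ln\ve)}{\ln\ve}\big)$ uniformly as long as $\mu(t)$ stays in a suitable range bounded away from $0$ and from $1$; in particular $x(t) = t + o(1)$ and $x$ maps $[0,b_\ve]$ essentially onto $[0,b]$. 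Then the second equation becomes $\rho'(t) = H'(\gamma(x(t))) = H'(\gamma(t)) + o(1)$, so to leading order $\rho(t) = \rho(0) + \int_0^t H'(\gamma(\sigma))\,d\sigma + (\text{correction})$. Because $H'(\gamma(\cdot))$ is essentially the derivative of the curvature along the segment and $(H_1)$ gives $H'' \ge c_0 > 0$, the map $t \mapsto H'(\gamma(t))$ is increasing; combined with $\int_0^b H'(\gamma(s))\,ds = 0$ this forces $H'(\gamma(0)) < 0 < H'(\gamma(b))$ and $\int_0^b H'(\gamma(\sigma))\,d\sigma = 0$ (up to an $O(h)$ error from $b_\ve$ versus $b$).

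Next I would solve the ODE forward from $t=0$ with data $\rho(0) = 0$ and free parameter $a = \rho'(0)$, obtaining $x(t;a)$, $\rho(t;a)$ on $[0,b_\ve]$ by standard Picard existence/uniqueness (the right-hand side is Lipschitz in the region where $\mu$ avoids $0$ and $1$; one has to check a priori that the trajectory stays in that region, which follows from the leading-order description above provided $a$ is taken small, say $|a| \le C\frac{\ln(-\ln\ve)}{|\ln\ve|}$). Define the two "defect" functions
\begin{equation*}
F_1(a) = \rho(b_\ve;a) - \rho_b, \qquad F_2(a) = x'(b_\ve;a) + \tfrac{1}{\ln\ve}G\big(\ve^2 H'(\gamma(x(b_\ve;a)))\big).
\end{equation*}
We are free to \emph{define} $\rho_b := \rho(b_\ve;a)$ once $a$ is chosen, so $F_1 \equiv 0$ automatically; the real equation to solve is $F_2(a) = 0$. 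Now $x'(b_\ve;a) = -\frac{1}{\ln\ve}G\big(\frac{\ve}{\ln\ve}\rho(b_\ve;a)\big)$, so $F_2(a) = 0$ is equivalent to $\frac{\ve}{\ln\ve}\rho(b_\ve;a) = \ve^2 H'(\gamma(x(b_\ve;a)))$, i.e. $\rho(b_\ve;a) = \frac{\ve\ln\ve}{1}\,H'(\gamma(x(b_\ve;a))) \cdot \frac{1}{?}$ — more precisely $\rho(b_\ve;a) = \ln\ve\cdot\ve\cdot H'(\gamma(x(b_\ve;a)))/1$, which since $h = -\ve\ln\ve$ is exactly the claimed relation $\rho(b_\ve) = -(H'(\gamma(b_\ve)) + o(1))h$ \emph{provided} $a$ can be chosen so that $\rho(b_\ve;a)$ equals this small negative target.

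So the core is a one-dimensional fixed-point/intermediate-value argument: show that $a \mapsto \rho(b_\ve;a)$ is a continuous (indeed monotone, to leading order $\partial_a \rho(b_\ve;a) \approx b_\ve \approx b > 0$) function whose range, as $a$ varies over $[-C\frac{\ln(-\ln\ve)}{|\ln\ve|}, C\frac{\ln(-\ln\ve)}{|\ln\ve|}]$, contains the target value $-(H'(\gamma(b_\ve)) + o(1))h$, which is of size $O(\ve|\ln\ve|)$ — much smaller than the variation $O(b\cdot\frac{\ln(-\ln\ve)}{|\ln\ve|})$ of the left side. Here is where $(H_1)$ enters essentially: the leading term of $\rho(b_\ve; 0)$ is $\int_0^{b_\ve} H'(\gamma(\sigma))\,d\sigma = \int_0^b H'(\gamma(\sigma))\,d\sigma + O(h) = O(h)$, already tiny, so a small adjustment in $a$ suffices; without the balancing condition $\int_0^b H' = 0$ this leading term would be $O(1)$ and no admissible $a$ could cancel it. I would make this quantitative by differentiating the flow with respect to $a$ (variational equations), bounding $\partial_a x$, $\partial_a \rho$ on $[0,b_\ve]$, and then applying the intermediate value theorem to $a \mapsto \rho(b_\ve;a) + (H'(\gamma(x(b_\ve;a))) + o(1))h$; the asymptotic formula for $\rho_b$ then drops out by reading off the value of $\rho(b_\ve;a)$ at the solution.

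\textbf{Main obstacle.} The delicate point is the uniform control of the trajectory inside the region where the right-hand side of \eqref{ode} is regular: one must verify a priori that $\frac{\ve}{\ln\ve}\rho(t)$ never reaches $0$ (where $G$ blows up, $G(b) \to +\infty$ as $b\to 0^+$) nor exceeds the range of $\Psi_1$, for all $t\in[0,b_\ve]$ and all candidate $a$. Since $\rho$ starts at $0$ this is genuinely borderline near $t=0$ and near $t=b_\ve$ (where $\rho_b < 0$ is small); handling the near-endpoint behavior — showing $\rho(t)$ becomes and stays negative of the right order of magnitude, using $H'(\gamma(0)) < 0$ strictly from $(H_1)$ — is the crux, and I expect it to require a careful barrier/monotonicity argument for $\rho$ on a short initial layer and a matching argument near $t = b_\ve$, together with the quantitative estimate $H'(\gamma(0)) \le -c_1 < 0$ that follows from $H'' \ge c_0 > 0$ and $\int_0^b H' = 0$.
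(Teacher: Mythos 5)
Your overall strategy---reduce the overdetermined system \equ{ode} to a one-parameter shooting problem, use the balancing condition $\int_0^b H'(\gamma(s))\,ds=0$ to make the defect small at the base point, and use the monotonicity coming from $H''\geq c_0>0$ plus the intermediate value theorem to remove it---is in essence a shooting reformulation of what the paper does (the paper integrates the system from $t=b_\ve$, writes $\rho_b=(H'(\gamma(b_\ve))+\rho_\ve)\ve\ln\ve$, and solves a scalar equation for $\rho_\ve$ whose linear coefficient $\frac{H'(\gamma(0))-H'(\gamma(b_\ve))}{H''(\gamma(b_\ve))}$ is nonzero). However, there is a genuine inconsistency in your choice of shooting parameter and window. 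You shoot in $a=\rho'(0)=H'(\gamma(x(0)))$ and restrict to $|a|\leq C\frac{\ln(-\ln\ve)}{|\ln\ve|}$. Since, as you yourself derive, $(H_1)$ forces $H'(\gamma(0))<0<H'(\gamma(b))$, taking $a$ near $0$ places $x(0)$ near the \emph{interior} zero $s_{\min}$ of $H'\circ\gamma$, not near $0$. Then $x(t;a)\approx s_{\min}+t$, the trajectory leaves the segment $\gamma([0,b])$ on which $(H_1)$ gives any control, and $\rho(b_\ve;0)\approx H(\gamma(s_{\min}+b_\ve))-H(\gamma(s_{\min}))$ is generically of order one; your claim that the leading term of $\rho(b_\ve;0)$ is $\int_0^{b_\ve}H'(\gamma(\sigma))\,d\sigma=O(h)$ implicitly assumes $x(0;0)=0$, which contradicts $a=H'(\gamma(x(0)))$. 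Worse, the solution you are after has $x(0)=O(\frac{\ln(-\ln\ve)}{|\ln\ve|})$, hence $\rho'(0)$ close to $H'(\gamma(0))<0$, an $O(1)$ value excluded by your admissible window, so the intermediate value argument as written cannot reach it.

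The repair is mechanical: shoot in $x(0)$ (equivalently in $x(b_\ve)$, i.e.\ in the correction $\rho_\ve$, which is exactly the paper's parametrization), centered at $x(0)=0$ with width $O(\frac{\ln(-\ln\ve)}{|\ln\ve|})$. Then the base defect $\rho(b_\ve)-\ve\ln\ve\,H'(\gamma(x(b_\ve)))$ is, to leading order, $H(\gamma(b_\ve))-H(\gamma(0))+O(\frac{\ln(-\ln\ve)}{\ln\ve})=O(\frac{\ln(-\ln\ve)}{\ln\ve})$ by $(H_1)$ and $b_\ve=b+O(h)$, while the derivative of the endpoint map with respect to $x(0)$ is approximately $H'(\gamma(b))-H'(\gamma(0))\geq c_0 b>0$ (the target term contributes only $O(\ve|\ln\ve|)$); your monotonicity/IVT step then goes through and yields $\rho_b=-(H'(\gamma(b_\ve))+O(\frac{\ln(-\ln\ve)}{\ln\ve}))h$, matching the paper. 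Your reduction of the two defect functions to the single equation $\rho(b_\ve;a)=\ve\ln\ve\,H'(\gamma(x(b_\ve;a)))$ is correct, and the obstacle you flag (the logarithmic singularity of $G$ where $\rho$ vanishes, near $t=0$ and $t=b_\ve$) is real but mild: once $x(0)\approx 0$, $\rho'(0)=H'(\gamma(x(0)))$ is bounded away from zero, so $\ln(-\rho(t))$ is integrable there; the paper handles this point only implicitly.
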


\begin{proof}
From the asymptotic behaviour of $G$, we know that the first equation of (\ref{ode}) is
\begin{eqnarray*}
\frac{dx}{dt}&=&-\frac{1}{\ln\ve}G(\frac{\ve}{\ln\ve}\rho(t))\\
&=&(1+O(\frac{\ln(-\ln\ve)}{\ln\ve}))(a_1\ln(-\rho(t))+a_2)
\end{eqnarray*}
where
\begin{equation*}
a_1=\frac{1}{\ln\ve}, \ a_2=1-\frac{\ln(-\ln\ve)}{\ln\ve}.
\end{equation*}
Integrating the above equation from $b_\ve$ to $t$, one has
\begin{eqnarray*}
x(t)-x(b_\ve)&=&\int_{b_\ve}^t-\frac{1}{\ln\ve}G(\frac{\ve}{\ln\ve}\rho(t))dt\\
&=&(1+O(\frac{\ln(-\ln\ve)}{\ln\ve}))[a_2(t-b_\ve)+a_1\int_{b_\ve}^t\ln(-\rho(t))dt].
\end{eqnarray*}
Plugging the expression for $x(t)$ into the second equation,
\begin{equation}
\rho'(t)=H'(\gamma(x(b_\ve))+(1+O(\frac{\ln(-\ln\ve)}{\ln\ve}))[a_2(t-b_\ve)+a_1\int_{b_\ve}^t\ln(-\rho(t))dt]).
\end{equation}
By the boundary condition $\rho(0)=0, \rho(b_\ve)=\rho_b$, we have
\begin{equation}\label{e11}
\int_{b_\ve}^0H'(\gamma(x(b_\ve)+(1+O(\frac{\ln(-\ln\ve)}{\ln\ve}))[a_2(t-b_\ve)+a_1\int_{b_\ve}^t\ln(-\rho(t))dt]))dt=-\rho_b.
\end{equation}
By Taylor's expansion,
\begin{eqnarray*}
&&H'(\gamma(x(b_\ve)+(1+O(\frac{\ln(-\ln\ve)}{\ln\ve}))[a_2(t-b_\ve)+a_1\int_b^t\ln(-\rho(t))dt]))\\
&&=H'(\gamma(x(b_\ve)+a_2(t-b)+O(\frac{\ln(-\ln\ve)}{\ln\ve})))\\
&&=H'(\gamma(x(b_\ve)+a_2(t-b_\ve)))+O(\frac{\ln(-\ln\ve)}{\ln\ve}).
\end{eqnarray*}
So from (\ref{e11}) and the above equation, we have
\begin{eqnarray}\label{e1}
\int_{b_\ve}^0H'(\gamma(x(t)))dt&=&\int_{b_\ve}^0H'(\gamma(x(b_\ve)+a_2(t-b)))dt+O(\frac{\ln(-\ln\ve)}{\ln\ve})\nonumber\\
&=&H(\gamma(x(b_\ve)-a_2b_\ve))-H(\gamma(x(b_\ve)))+O(\frac{\ln(-\ln\ve)}{\ln\ve})\nonumber\\
&=&\rho_b.
\end{eqnarray}

Since by the third boundary condition
\begin{equation}
x'(b_\ve)=-\frac{1}{\ln\ve}G(\ve^2H'(\gamma(x(b_\ve)))),
\end{equation}
one can get that
\begin{equation}
\rho_b=H'(\gamma(x(b_\ve)))\ve\ln\ve.
\end{equation}
We assume that
\begin{equation}
\rho_b=(H'(\gamma(b_\ve))+\rho_\ve)\ve\ln\ve,
\end{equation}
then
\begin{equation}\label{e12}
x(b_\ve)=b_\ve+\frac{(1+o(1))\rho_\ve}{H''(\gamma(b_\ve))}.
\end{equation}

Using (\ref{e12}),  (\ref{e1}) is reduced to the following:
\begin{eqnarray}
&&H(\gamma(0))-H(\gamma(b_\ve))+\frac{H'(\gamma(0))-H'(\gamma(b_\ve))}{H''(\gamma(b_\ve))}\rho_\ve\\
&&+o(\rho_\ve)+O(\rho_\ve^2)=O(\frac{\ln(-\ln\ve)}{\ln\ve})\nonumber.
\end{eqnarray}

By the assumption $(H_1)$
\begin{equation}
H(\gamma(0))=H(\gamma(b)), \ H'(\gamma(0))\neq H'(\gamma(b)),
\end{equation}
and
\begin{equation}
H''(\gamma)\geq c_0>0, \ b_\ve=b+O(h),
\end{equation}
the above equation is uniquely solvable with
\begin{equation}
\rho_\ve=O(\frac{\ln(-\ln\ve)}{\ln\ve}).
\end{equation}

So there exists unique $\rho_b=(H'(\gamma(b_\ve))+O(\frac{\ln(-\ln\ve)}{\ln\ve}))\ve\ln\ve$ such that (\ref{ode}) is solvable, and we have
\begin{equation}
x(0)=O(\frac{\ln(-\ln\ve)}{\ln\ve}), \ x(b_\ve)=b_\ve+O(\frac{\ln(-\ln\ve)}{\ln\ve}).
\end{equation}

\end{proof}

We will use the solution of the ODE to approximate the solution of (\ref{e3}). In order to obtain a good approximate solution, one need to control the error of
\begin{equation*}
\sum_{j=1}^iH'(\gamma(x(t_j)))h-\int_{0}^{t_{i+1}}H'(\gamma(x(t)))dt.
\end{equation*}
So we will use the midpoint Riemann sum approximation of integrals which will give us
\begin{equation}
\sum_{j=1}^iH'(\gamma(x(t_j)))h-\int_{0}^{t_{i+1}}H'(\gamma(x(t)))dt=O(h^2).
\end{equation}
To be more specific, we will choose the approximate solution to be the following:
\begin{equation}
x_i^0=x(\bar{t}_i), \  \bar{t}_i=\frac{t_i+t_{i+1}}{2},  i=1,\cdots, k-1,
\end{equation}
and
\begin{equation}
x_k^0=x_{k-1}^0+\ve G(\frac{\ve}{\ln\ve}\rho_b)
\end{equation}
where $x(t)$ is the solution determined by the ODE (\ref{ode}).

We want to find the solution to (\ref{e3}) of the form
\begin{equation}
s_i=x_i^0+y_i.
\end{equation}
Then  $y_i$ will satisfy the following equation:
\begin{equation}\label{e6}
\left\{\begin{array}{l}
y_{i+1}-y_i=-E_i+\ve\Big( G(-\ve^2\sum_{j=1}^iH'(\gamma(x_j^0+y_j))+O(\ve^3i))-G(-\ve^2\sum_{j=1}^iH'(\gamma(x_j^0))) \Big),\\
\ \ \ \ \ \ \ \ \ \ \mbox{ for }i=1,\cdots,k-1\\
\ve^2\sum_{j=1}^kH''(\gamma(x_j^0))y_j+O(\ve^2)\sum_{j=1}^k|y_j|^2=-E_k+O(\ve^3 k),
\end{array}
\right.
\end{equation}
where
\begin{equation*}
E_i=x_{i+1}^0-x_i^0-\ve G(-\ve^2\sum_{j=1}^iH'(\gamma(x_j^0)))
\end{equation*}
for $i=1,\cdots,k-1$, and
\begin{equation*}
E_k=\ve^2\sum_{j=1}^kH'(\gamma(x_j^0)).
\end{equation*}

First we show that the approximate solution we choose is indeed a good approximate solution, i.e. the error $E_i$ is small enough. In fact, we have the following error estimate:
\begin{lemma}
\begin{equation}
E_i=x_{i+1}^0-x_i^0-\ve G(-\ve^2\sum_{j=1}^iH'(\gamma(x_j^0)))=O(\ve)
\end{equation}
for $i=1,\cdots,k-1$, and
\begin{equation}
E_k=\ve^2\sum_{j=1}^kH'(\gamma(x_j^0))=O(\ve^2\frac{\ln(-\ln\ve)}{\ln\ve}).
\end{equation}
Moreover, the following estimate holds:
\begin{equation}
\sum_{i=1}^{k-1}|E_i|=O(\ve).
\end{equation}
\end{lemma}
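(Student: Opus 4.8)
The plan is to treat the approximate positions $x^0_i=x(\bar t_i)$ as midpoint quadrature nodes for the ODE \equ{ode}, so that $E_i$ is exposed as an ODE/quadrature error. Write $h=-\ve\ln\ve$; then the first line of \equ{ode} reads $x'(t)=-\frac1{\ln\ve}G(\frac{\ve}{\ln\ve}\rho(t))$, hence $\ve\,G(\frac{\ve}{\ln\ve}\rho(t))=h\,x'(t)$, while $\rho'(t)=H'(\gamma(x(t)))$ with $\rho(0)=0$, so $\rho(t)=\int_0^tH'(\gamma(x(\sigma)))\,d\sigma$.

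First I would record the pointwise behaviour of the ODE solution on $(0,b_\ve]$. Using \equ{ode}, the asymptotics $G(b)=-(1+o(1))\ln b$ recorded above, and $G'(b)=1/\Psi_1'(G(b))=-(1+o(1))/b$ (which follows from $\Psi_1'\sim-\Psi_1$, itself a consequence of the exponential decay of $\Psi_1$), differentiation gives
\[
|\rho'(t)|=O(1),\qquad |x'(t)|=O(1),\qquad |x''(t)|=O\!\Big(\tfrac1{|\ln\ve|\,|\rho(t)|}\Big),\qquad |x'''(t)|=O\!\Big(\tfrac1{|\ln\ve|\,|\rho(t)|^{2}}\Big).
\]
Since $x'>0$, $x$ is strictly increasing from $x(0)=O(\frac{\ln(-\ln\ve)}{\ln\ve})$ to $x(b_\ve)=b_\ve+O(\frac{\ln(-\ln\ve)}{\ln\ve})$. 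By $(H_1)$ the function $H'\circ\gamma$ is strictly increasing on $[0,b]$ with $\int_0^bH'(\gamma)=0$, so $H'(\gamma(0))<0<H'(\gamma(b))$; hence $\rho'<0$ near $t=0$, $\rho'>0$ near $t=b_\ve$, so $\rho$ decreases then increases and $\rho<0$ on $(0,b_\ve]$. Combining the linear behaviour of $\rho$ at the two ends with $\rho(0)=0$ and, from the preceding lemma, $\rho(b_\ve)=\rho_b=-(H'(\gamma(b_\ve))+O(\frac{\ln(-\ln\ve)}{\ln\ve}))h$, one obtains the key lower bound
\[
|\rho(t_{i+1})|\ \ge\ c_0\,\min(i,k-i)\,h,\qquad i=1,\dots,k-1,
\]
for some $c_0>0$ independent of $\ve$: near each end $|\rho|$ grows linearly in the distance to that end and stays $\gtrsim h$ because of the $\rho_b$–term, while in the interior $|\rho|$ is bounded below by a positive constant.

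Next I would turn to $E_i$. Since $t_{i+1}=\tfrac12(\bar t_i+\bar t_{i+1})$ and $\bar t_{i+1}-\bar t_i=h$, a midpoint Taylor expansion of $\int_{\bar t_i}^{\bar t_{i+1}}x'$ together with Step 1 gives, for $1\le i\le k-2$,
\[
x^0_{i+1}-x^0_i=h\,x'(t_{i+1})+O\!\Big(h^3\!\sup_{[\bar t_i,\bar t_{i+1}]}|x'''|\Big)=\ve\,G\!\Big(\tfrac{\ve}{\ln\ve}\rho(t_{i+1})\Big)+O\!\Big(\tfrac{\ve}{\min(i,k-i)^2}\Big).
\]
Setting $e_i:=h\sum_{j=1}^iH'(\gamma(x^0_j))-\rho(t_{i+1})$ for the midpoint quadrature error for $\rho(t_{i+1})=\int_0^{t_{i+1}}H'(\gamma(x))$, the same bounds — the blow-up of $(H'\circ\gamma\circ x)''$ near the ends being tamed by the $|\rho|$–lower bound — give $|e_i|=O(h^2)$, and $-\ve^2\sum_{j=1}^iH'(\gamma(x^0_j))=\tfrac{\ve}{\ln\ve}(\rho(t_{i+1})+e_i)$. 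Subtracting, and using $|G'(b)|=(1+o(1))/|b|$ at the scale $\tfrac{\ve}{|\ln\ve|}|\rho(t_{i+1})|$ (legitimate since $|e_i|\ll|\rho(t_{i+1})|$),
\[
|E_i|=O\!\Big(\tfrac{\ve\,|e_i|}{|\rho(t_{i+1})|}\Big)+O\!\Big(\tfrac{\ve}{\min(i,k-i)^2}\Big)=O\!\Big(\tfrac{\ve}{\min(i,k-i)^2}\Big)
\]
by $|e_i|=O(h^2)$, $|\rho(t_{i+1})|\ge c_0\min(i,k-i)h$ and $h=\ve|\ln\ve|$. This is $O(\ve)$, and summing the convergent series $\sum_{m\ge1}m^{-2}$ yields $\sum_{i=1}^{k-1}|E_i|=O(\ve)$; the index $i=k-1$ is handled by replacing the Taylor step with the definition $x^0_k-x^0_{k-1}=\ve G(\tfrac{\ve}{\ln\ve}\rho_b)$ and $\rho(t_k)=\rho_b$, which even gives $|E_{k-1}|=O(\ve h)=o(\ve)$. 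Finally, for $E_k$ one writes $\sum_{j=1}^kH'(\gamma(x^0_j))=\tfrac1h(\rho_b+e_{k-1})+H'(\gamma(x^0_k))=\tfrac{\rho_b}{h}+O(h)+H'(\gamma(x^0_k))$; since $\tfrac{\rho_b}{h}=-H'(\gamma(b_\ve))+O(\frac{\ln(-\ln\ve)}{\ln\ve})$ while $x^0_k=x^0_{k-1}+\ve G(\tfrac{\ve}{\ln\ve}\rho_b)=b_\ve+O(\frac{\ln(-\ln\ve)}{\ln\ve})$ forces $H'(\gamma(x^0_k))=H'(\gamma(b_\ve))+O(\frac{\ln(-\ln\ve)}{\ln\ve})$, the two copies of $H'(\gamma(b_\ve))$ cancel, giving $E_k=\ve^2\sum_{j=1}^kH'(\gamma(x^0_j))=O(\ve^2\frac{\ln(-\ln\ve)}{\ln\ve})$.

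The one genuinely delicate point is the lower bound on $\rho$ in Step 1: the ODE is degenerate at the two endpoints of $\gamma$ — there $\rho$ vanishes, resp.\ is only of size $h$, so $x''$ and $x'''$ blow up — and the whole estimate survives only because $|\rho(t_{i+1})|\gtrsim\min(i,k-i)h$ makes the per-spike error $O(\ve/\min(i,k-i)^2)$ summable over the $O(1/h)$ spikes. Proving that bound is precisely where hypothesis $(H_1)$ (fixing the signs of $H'$ at the endpoints of $\gamma$) and the asymptotics of $\rho_b$ from the preceding lemma enter.
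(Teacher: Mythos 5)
Your proof is correct and follows essentially the same route as the paper: a midpoint-rule error analysis for both the $x$-increment and the Riemann sum for $\rho(t_{i+1})$, combined with the comparability $|\rho(t_{i+1})|\gtrsim \min(i,k-i)h$ and the asymptotics of $G$, $G'$, plus the cancellation of $H'(\gamma(b_\ve))$ against $\rho_b/h$ for $E_k$. The only difference is presentational: you state explicitly the lower bound on $|\rho|$ (which the paper records as $\rho(t_i)=O(\min\{i,k-i+1\}h)$ but uses as a two-sided estimate) and the summable per-index bound $O(\ve/\min(i,k-i)^2)$, which is a slightly cleaner bookkeeping of the same estimates.
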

\begin{proof}
First for $i=k-1$, we have
\begin{eqnarray*}
&&x_{k}^0-x_{k-1}^0-\ve G(-\ve^2\sum_{j=1}^{k-1}H'(\gamma(x_j^0)))\\
&&=\ve G(\frac{\ve}{\ln\ve}\rho_b)-\ve G(-\ve^2\sum_{j=1}^{k-1}H'(\gamma(x_j^0)))\\
&&=O(\frac{\ve}{\rho_b})|\rho_b-\sum_{j=1}^{k-1}H'(\gamma()x_j^0)h|
\end{eqnarray*}

Since we choose the midpoint approximation, we have for $i=1,\cdots,k-2$,
\begin{equation}\label{e4}
\rho(t_{i+1})-\sum_{j=1}^iH'(\gamma(x_j^0))h=O(h^2),
\end{equation}
and
\begin{eqnarray}\label{e5}
\sum_{j=1}^kH'(\gamma(x_j^0))h&=&(\sum_{j=1}^{k-1}H'(\gamma(x_j^0))h-\rho(t_k))+(H'(\gamma(x_k^0))h+\rho(t_k))\nonumber\\
&=&O(h^2)+O(\frac{\ln(-\ln\ve)}{\ln\ve})h=O(\frac{\ln(-\ln\ve)}{\ln\ve})h.
\end{eqnarray}
By (\ref{e4}) and (\ref{e5}), and recall that $\rho_b=O(h)$, one can obtain that
\begin{equation*}
E_{k-1}=x_{k}^0-x_{k-1}^0-\ve G(-\ve^2\sum_{j=1}^{k-1}H'(\gamma(x_j^0)))=O(\ve h)
\end{equation*}
and
\begin{equation}
E_k=O(\ve^2\frac{\ln(-\ln\ve)}{\ln\ve}).
\end{equation}

Next by the equation satisfied by $\rho(t)$, we can get that
\begin{equation}
\rho(t_i)=O(\min\{i,k-i+1\}h)
\end{equation}
so for $i=1,\cdots,k-2$
\begin{eqnarray*}
&&x_{i+1}^0-x_i^0-\ve G(-\ve^2\sum_{j=1}^iH'(\gamma(x_j^0)))\\
&&=\int_{\bar{t}_i}^{\bar{t}_{i+1}}-\frac{1}{\ln\ve}G(\frac{\ve}{\ln\ve}\rho(t))dt-\ve G(-\ve^2\sum_{j=1}^iH'(\gamma(x_j^0)))\\
&&=-\frac{1}{\ln\ve}G(\frac{\ve}{\ln\ve}\rho(t_{i+1}))h-\ve G(-\ve^2\sum_{j=1}^iH'(\gamma(x_j^0)))+O(\frac{\rho''\rho-(\rho')^2}{|\ln\ve|\rho^2}(t_{i+1}))h^3\\
&&=\ve(G(\frac{\ve}{\ln\ve}\rho(t_{i+1}))-G(-\ve^2\sum_{j=1}^iH'(\gamma(x_j^0))))+O(\frac{\rho''\rho-(\rho')^2}{|\ln\ve| \rho^2}(t_{i+1}))h^3\\
&&=O(\frac{\ve}{\rho(t_{i+1})})(\rho(t_{i+1})-\sum_{j=1}^iH'(\gamma(x_j^0))h)+O(\frac{\rho''\rho-(\rho')^2}{|\ln \ve|\rho^2}(t_{i+1}))h^3\\
&&=O(\frac{\ve h}{\min\{i,k-i+1\}})+O(\ve)(\frac{1}{\min\{i,k-i+1\}^2}+\frac{h}{\min\{i,k-i+1\}})\\
&&=O(\ve).
\end{eqnarray*}

Moreover, from the above estimate, we have
\begin{equation*}
\sum_{j=1}^iE_j=O(\ve), \mbox{ for }i=1,\cdots,k-1.
\end{equation*}

\end{proof}

Finally, we will show that equation (\ref{e6}) is solvable.
\begin{lemma}
There exists $\ve_0>0$, such that for $\ve<\ve_0$, there exists a solution $\{y_i\}_{1\leq i\leq k}$ to (\ref{e6}) such that
\begin{equation}
\|y\|_\infty\leq C\ve\ln(-\ln\ve).
\end{equation}
\end{lemma}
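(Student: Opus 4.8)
The plan is to regard \equ{e6} as a two‑point problem that is in fact governed by a single scalar unknown. The first $k-1$ equations express $y_{i+1}-y_i$ as an explicit function of $y_1,\dots,y_i$, so once $y_1$ is prescribed the whole vector is determined; writing $y_i=y_1+z_i$ with $z_1=0$ one has
\begin{equation*}
z_j=-\sum_{l=1}^{j-1}E_l+\ve\sum_{l=1}^{j-1}\big(G(A_l)-G(B_l)\big),\qquad
A_l=-\ve^2\!\sum_{j=1}^{l}H'(\gamma(x_j^0+y_j))+O(\ve^3 l),\qquad
B_l=-\ve^2\!\sum_{j=1}^{l}H'(\gamma(x_j^0)).
\end{equation*}
The last equation of \equ{e6} is then a scalar equation for $y_1$, uniquely solvable for $\ve$ small because $\sum_{j=1}^k H''(\gamma(x_j^0))\ge c_0k$; since $k=k_\ve$ has order $b/(\ve|\ln\ve|)$ and $E_k=O(\ve^2\ln(-\ln\ve)/|\ln\ve|)$, its solution has order $|E_k|/(\ve^2k)$, i.e. order $\ve\ln(-\ln\ve)$, which fixes the scale in the statement. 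Accordingly I would look for a fixed point on the complete set
\begin{equation*}
\mathcal B'=\big\{\,y\in\R^k:\ \|y\|_\infty\le C\ve\ln(-\ln\ve)\ \text{and $y$ satisfies the last equation of }\equ{e6}\,\big\},
\end{equation*}
with $T$ the map whose increments $\hat y_{i+1}-\hat y_i$ are the right-hand sides of \equ{e6} evaluated at the input $y$, and whose first component $\hat y_1$ is chosen so that the output satisfies the last equation of \equ{e6}.

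For the self-map property one must control $\ve\sum_{i=1}^{k-1}|G(A_i)-G(B_i)|$, the sums $\sum_i|E_i|$ being already $O(\ve)$ by the preceding lemma. From the asymptotics of $G$ one has $|G'(b)|\le C/b$, and from the analysis of the ODE \equ{ode} --- convexity of the momentum $\rho$ together with the endpoint signs $H'(\gamma(0))<0<H'(\gamma(b))$ --- one has the lower bound $B_i\ge c\,\ve^2\min\{i,k-i+1\}$, so that $|G(A_i)-G(B_i)|\le C(\ve^2\min\{i,k-i+1\})^{-1}|A_i-B_i|$ with $A_i-B_i=-\ve^2(P_i-P_i^0)+O(\ve^3 i)$, where $P_i=\sum_{j\le i}H'(\gamma(x_j^0+y_j))$ and $P_i^0=\sum_{j\le i}H'(\gamma(x_j^0))$. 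In the bulk range $i\le k/2$ the crude bound $|P_i-P_i^0|\le C\,i\,\|y\|_\infty$ already suffices, because then $\ve\sum_{i\le k/2}|G(A_i)-G(B_i)|\le C\ve k(\|y\|_\infty+\ve)$, which is $o(1)(\|y\|_\infty+\ve)$ since $\ve k$ has order $1/|\ln\ve|$.

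The crux is the range with $i$ close to $k$, where $B_i$ may be as small as $O(\ve^2)$ and $G'$ is correspondingly large. There I would split off the tail,
\begin{equation*}
P_i-P_i^0=(P_k-P_k^0)-\sum_{j=i+1}^{k}\big(H'(\gamma(x_j^0+y_j))-H'(\gamma(x_j^0))\big),
\end{equation*}
so that the remaining sum has only $k-i$ terms; this is precisely where the configuration space \equ{par}, hence hypothesis $(H_1)$, is used. Moreover $P_k-P_k^0=\sum_{j=1}^k H''(\gamma(x_j^0))y_j+O(k\|y\|_\infty^2)$, and on $\mathcal B'$ the last equation of \equ{e6} pins $\sum_j H''(\gamma(x_j^0))y_j=\big(-E_k+O(\ve^3k)-O(\ve^2)\sum_j y_j^2\big)/\ve^2$, of order $\ln(-\ln\ve)/|\ln\ve|$ \emph{with a constant independent of $C$}. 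Feeding this back yields $|P_i-P_i^0|\le C\ln(-\ln\ve)/|\ln\ve|+C\min\{i,k-i+1\}\|y\|_\infty$, and summing over $i$ (using once more $\ve k=o(1)$) gives $\ve\sum_{i=1}^{k-1}|G(A_i)-G(B_i)|\le C_0\ve\ln(-\ln\ve)+o(1)\|y\|_\infty+o(\ve\ln(-\ln\ve))$ with $C_0$ independent of $C$. Together with the bound $|\hat y_1|\le C|E_k|/(\ve^2k)+C\|\hat z\|_\infty+o(\ve\ln(-\ln\ve))$ coming from the last equation, this gives $\|\hat y\|_\infty\le(C_1+o(1)C)\ve\ln(-\ln\ve)$, so $T(\mathcal B')\subseteq\mathcal B'$ once $C$ is fixed large and $\ve$ small.

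For the contraction one repeats these estimates on differences. The extra point is that for $y,y'\in\mathcal B'$ the last equation of \equ{e6} has the same right-hand side for both, so subtracting gives $\sum_j H''(\gamma(x_j^0))(y_j-y_j')=O(k\|y\|_\infty\|y-y'\|_\infty)=o(\|y-y'\|_\infty)$; the tail argument then again produces $|P_i(y)-P_i(y')|\le o(\|y-y'\|_\infty)+C\min\{i,k-i+1\}\|y-y'\|_\infty$, and with $\ve k=o(1)$ the Lipschitz constant of $T$ on $\mathcal B'$ is $o(1)$. (Here one also uses that the $O(\ve^3 i)$ remainders in \equ{e6}, which originate from the Lyapunov--Schmidt reduction, depend Lipschitz-continuously on the configuration, as follows from the continuous dependence in Proposition \ref{p3}.) The contraction mapping theorem on $\mathcal B'$ then yields the desired $\{y_i\}$ with $\|y\|_\infty\le C\ve\ln(-\ln\ve)$. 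The hard part is the endpoint estimate of the third paragraph: without the tail splitting, the contribution of the terms $G(A_i)-G(B_i)$ with $i$ near $k$ is only of order $\|y\|_\infty$ with a geometry-dependent constant, and it is the boundary balancing in $(H_1)$, combined with $\ve k_\ve=o(1)$, that renders it genuinely lower order.
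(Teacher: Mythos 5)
Your proposal is correct in substance and arrives at the same bound, but it organizes the argument differently from the paper, most notably in the delicate near-endpoint regime. The paper rewrites \equ{e6} as the quasi-linear system \equ{e13}, solves $y_2,\dots,y_k$ recursively in terms of $y_1$, and splits the indices at $i_0=(1-\delta)k$: in the bulk the coefficient $\ve\sum_{i\le l}\bigl(\sum_{j\le i}H''y_j\bigr)/\bigl(\sum_{j\le i}H'\bigr)$ sums to $o(1)$ (using $\ve k\sim 1/|\ln\ve|$ and the lower bound $|\sum_{j\le i}H'(\gamma(x_j^0))|\gtrsim\min\{i,k-i+1\}$ coming from $\rho$), while in the tail it is absorbed by choosing $\delta$ with $C_0\delta<\tfrac14$, yielding $y_i=O(y_1)+O(\ve)$; only afterwards is the last equation used, as a scalar equation for $y_1$, where $H''\ge c_0$ and $E_k=O(\ve^2\ln(-\ln\ve)/|\ln\ve|)$ produce $y_1=O(\ve\ln(-\ln\ve))$. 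You instead impose the last equation as a constraint on the ball $\|y\|_\infty\le C\ve\ln(-\ln\ve)$ and use it to pin $P_k-P_k^0=\sum_j H''(\gamma(x_j^0))y_j+O(k\|y\|_\infty^2)$, then control $P_i-P_i^0$ for $i$ near $k$ by the backward (tail) splitting; this replaces the paper's $\delta$-absorption by a constraint-based estimate, while sharing the same two essential inputs (the denominator bound via the ODE and $\ve k_\ve=o(1)$, plus $H''\ge c_0$ for the scalar unknown). Your quantitative bookkeeping checks out ($\ve\sum_{i}\ln$-type sums against $\ln k\sim|\ln\ve|$ give exactly the $C_0\ve\ln(-\ln\ve)$ term with $C_0$ independent of $C$). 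Two small caveats: the tail splitting itself is pure algebra, so attributing it to \equ{par}/$(H_1)$ is misleading --- $(H_1)$ enters through $H''\ge c_0$, the endpoint bounds $H'(\gamma(0))\le -c_0b/2<0<c_0 b/2\le H'(\gamma(b))$ behind the lower bound on $|\rho|$, and the smallness of $E_k$; and your contraction step needs Lipschitz (not merely continuous) dependence of the implicit $O(\ve^3 i)$ remainders on the configuration, which Proposition \ref{p3} does not literally provide --- either prove that dependence or close the argument with a Brouwer/Schauder-type fixed point, for which your self-map estimate plus continuity suffices (the paper's own ``contraction mapping'' step is equally informal on this point).
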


\begin{proof}
For $\|y\|_\infty<< \ve|\ln \ve|$, we have
\begin{eqnarray*}
&&\ve G(-\ve^2\sum_{j=1}^iH'(\gamma(x_j^0+y_j))+O(\ve^3 i))-\ve G(-\ve^2\sum_{j=1}^iH'(\gamma(x_j^0)))\\
&&=-\ve (\frac{\sum_{j=1}^iH''(\gamma(x_j^0))y_j}{\sum_{j=1}^iH'(\gamma(x_j^0))})+O(\frac{\ve i |y|_{j\leq i}^2}{\sum_{j=1}^iH'(\gamma(x_j^0))})+O(\frac{\ve^2 i}{\sum_{j=1}^iH'(\gamma(x_j^0))})
\end{eqnarray*}

The equations (\ref{e6}) for $y_i$ can be rewritten as follows:
\begin{equation}\label{e13}
\left\{\begin{array}{l}
y_{i+1}-y_i+\ve\frac{\sum_{j=1}^iH''(\gamma(x_j^0))y_j}{\sum_{j=1}^iH'(\gamma(x_j^0))}=-E_i+O(\frac{\ve i |y|_{j\leq i}^2}{\sum_{j=1}^iH'(\gamma(x_j^0))})+O(\frac{\ve^2 i}{\sum_{j=1}^iH'(\gamma(x_j^0))})\\
\ \ \ \ \ \ \ \ \ \ \ \ \ \ \ \ \ \ \ \ \ \ \mbox{ for }i=1,\cdots,k-1\\
\sum_{j=1}^kH''(\gamma(x_j^0))y_j+\sum_{j=1}^kH'''(\gamma(x_j^0))y_j^2=O(\ve k)+O(\frac{\ln(-\ln\ve)}{\ln\ve}).
\end{array}
\right.
\end{equation}
We will show that one can first solve $y_2$ to $y_k$ in terms of $y_1$ from the first $k-1$ equations, and finally solve $y_1$ by the $k$-th equation of (\ref{e13}).

For $1\leq l \leq i_0=(1-\delta)k$ where $\delta>0$ is a small number to be determined later, we have
\begin{eqnarray*}
&&y_{l+1}-y_1+\ve\sum_{i=1}^l\frac{\sum_{j=1}^iH''(\gamma(x_j^0))y_j}{\sum_{j=1}^iH'(\gamma(x_j^0))}\\
&&=\sum_{i=1}^lE_i+\sum_{i=1}^l\frac{\ve i}{\sum_{j=1}^iH'(\gamma(x_j^0))}|y|^2_{i\leq l}+\sum_{i=1}^l\frac{\ve^2 i}{\sum_{j=1}^iH'(\gamma(x_j^0))}\\
&&=O(\ve)+\sum_{i=1}^l\frac{\ve i|y|^2_{i\leq l}}{\min\{i,k-i+1\}}+\sum_{i=1}^lO(\frac{\ve^2 i}{\min\{i,k-i+1\}})\\
&&=O(\ve)+O(\frac{\ve l}{\delta})|y|^2_{i\leq l}.
\end{eqnarray*}
where we denote by
\begin{equation*}
|y|_{i_1\leq i\leq i_2}=\sup_{i_1\leq i\leq i_2}|y_i|.
\end{equation*}
Moreover,
\begin{eqnarray*}
\ve\sum_{i=1}^l\frac{\sum_{j=1}^iH''(\gamma(x_j^0))y_j}{\sum_{j=1}^iH'(\gamma(x_j^0))}
&=&\ve\sum_{i=1}^lO(\frac{i|y|_{i\leq l}}{\min\{i,k-i+1\}})\\
&=&O(\frac{\ve l |y|_{i\leq l}}{\delta})=o(1)|y|_{i\leq l}.
\end{eqnarray*}

Thus one can get that for $l\leq i_0$
\begin{equation}
y_l=y_1+o(1)|y|_{i\leq i_0}+o(1)|y|^2_{i\leq i_0}+O(\ve)
\end{equation}

So we can get that
\begin{equation}
y_i=(1+o(1))y_1+O(\ve),  i=2,\cdots,i_0.
\end{equation}

For $l > i_0$, we have the following:
\begin{eqnarray*}
y_{l+1}-y_1&=&-\ve \sum_{i=i_0+1}^l\frac{\sum_{j=1}^iH''(\gamma(x_j^0))y_j}{\sum_{j=1}^iH'(\gamma(x_j^0))}\\
&+&O(\ve)+O(|y|^2_{i_0< i\leq l})+O(\frac{\ve l}{\delta})|y|^2_{i\leq i_0}+o(1)|y|_{i\leq i_0}\\
&=&C_0\delta |y|_{i_0< i\leq l}+O(|y|^2_{i_0< i\leq l})+O(|y|_{i\leq i_0})+O(\ve)\\
\end{eqnarray*}
for some $C_0$ independent of $\ve$ and $\delta$.
So for $i_0< i \leq k$,
\begin{equation}
y_i=O(y_1)+C_0\delta |y|_{i_0< i\leq l}+O(|y|^2_{i_0< i\leq l})+O(\ve)
\end{equation}

If $\delta>0$ is small such that $C_0\delta <\frac{1}{4}$, then the above system is solvable with
\begin{equation}
y_i=O(y_1)+O(\ve).
\end{equation}
From the last equation, we have
\begin{eqnarray*}
&&\sum_{i=1}^{i_0}H''(\gamma(x_i^0))y_i+\sum_{i=i_0+1}^kH''(\gamma(x_i^0))y_i+O(k|y_1|^2)+O(k\ve^2)\\
&&=(\sum_{i=1}^{i_0}H''(\gamma(x_i^0))(1+o(1))y_1+O(\delta k |y_1|)+O(k\ve)+
O(k|y_1|^2)\\
&&=O(\frac{\ln(-\ln\ve)}{\ln\ve}).
\end{eqnarray*}
Thus by the assumption $(H_1)$, the equation is reduced to
\begin{eqnarray*}
y_1=o(1)y_1+O(\delta)|y_1|+O(|y_1|^2)+O(\ve\ln(-\ln\ve))
\end{eqnarray*}
If we further choose $\delta $ small enough but independent of $\ve$ such that $O(\delta)|y_1|<\frac{1}{2}|y_1|$, it is easy to see that by contraction mapping, the above equation has a solution and satisfies
\begin{equation}
y_1=O(\ve\ln(-\ln\ve)).
\end{equation}
Thus we get that there exists a solution to (\ref{e6}) with
\begin{equation*}
\|y\|_\infty\leq C\ve\ln(-\ln\ve)<<\ve|\ln\ve|.
\end{equation*}

Thus we have proved the existence of solution to (\ref{e6}).
\end{proof}

\bigskip

\setcounter{equation}{0}
\section{Appendix: Proof of Proposition \ref{p1}}

In this appendix, we shall give a proof of Proposition \ref{p1}. The proof is rather standard. It follows from arguments in \cite{awz} and \cite{lnw}. It is
based on Fredholm Alternative Theorem for compact
operator and an a-priori estimates.

First we need an estimate on the following matrix $M$ defined by
\begin{equation}
M_{ij}=\int_{\Omega_\ve}Z_i Z_j dz, \ i,j=1,\cdots,k.
\end{equation}

\begin{lemma}\label{lemma1}
For $\ve$ sufficiently small, given any vector $\vec{b}\in \R^k$, there exists a unique vector $\vec{\beta}\in \R^k$, such that $M\vec{\beta}=\vec{b}$. Moreover,
\begin{equation}\label{matrix}
\|\vec{\beta}\|_\infty\leq C\|\vec{b}\|_\infty
\end{equation}
for some constant $C$ independent of $\ve$.
\end{lemma}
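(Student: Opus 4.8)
The matrix $M$ is the Gram matrix of the approximate kernels $Z_i$, so the plan is to show that it is a diagonally dominant perturbation of a positive multiple of the identity and then to invert it by a Neumann series (equivalently, Gershgorin) argument, with all constants controlled uniformly in $\ve$ and in $k=k_\ve$. First I would pin down the diagonal entries: by Proposition \ref{pro2} one has $Z_i(z)=\frac{\partial w}{\partial y_1}(y^i)+\ve\,\eta(y^i)\chi_\ve(z-P_i)+\ve^2\eta_{1,i}(z)$, with $\frac{\partial w}{\partial y_1}$, $\eta$ and $\eta_{1,i}$ all decaying like $e^{-\mu|\cdot|}$. Changing variables $z\mapsto y^i$ and treating the $\ve$-corrections as $O(\ve)$ errors in $L^2(\R^2_+)$, this gives
\[
M_{ii}=\int_{\R^2_+}\Big(\frac{\partial w}{\partial y_1}\Big)^2\,dy+O(\ve)=:m_*+O(\ve),
\]
where $m_*>0$ is a fixed constant; in particular $M_{ii}\ge m_*/2$ for $\ve$ small.

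Next I would estimate the off-diagonal entries. For $i\neq j$ the two kernels $Z_i,Z_j$ are concentrated near distinct points $P_i,P_j$, so using the exponential decay of $w$ and $w'$ together with the expansion of $Z_i$ from Proposition \ref{pro2} one obtains $|M_{ij}|\le C\,w(|P_i-P_j|)$. By \equ{equation101} the consecutive spikes satisfy $|s_\ell-s_{\ell-1}|\ge (1+o(1))|\ve\ln\ve|$, hence $|P_i-P_j|=\frac{|s_i-s_j|}{\ve}\ge (1+o(1))\,|i-j|\,|\ln\ve|$, and since $w(r)\le Cr^{-1/2}e^{-r}$ this yields the geometric decay
\[
|M_{ij}|\le C\,\ve^{(1+o(1))|i-j|},\qquad i\neq j .
\]
Summing over a row, $\sum_{j\neq i}|M_{ij}|\le C\ve\sum_{m\ge 1}\ve^{(1+o(1))(m-1)}\le C\ve$, uniformly in $i$ and in $k$ (one may even quote \equ{equation101} directly to get the sharper bound $C\ve/|\ln\ve|$ for the nearest-neighbour terms).

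Finally I would conclude by diagonal dominance. Writing $M=D+R$ with $D=\mathrm{diag}(M_{11},\dots,M_{kk})$ and $R$ the off-diagonal part, the two estimates above give $\|D^{-1}R\|_{\infty}=\max_i M_{ii}^{-1}\sum_{j\neq i}|M_{ij}|\le C\ve<\tfrac12$ for $\ve$ small, where $\|\cdot\|_\infty$ denotes the operator norm on $(\R^k,\|\cdot\|_\infty)$. Hence $I+D^{-1}R$ is invertible with $\|(I+D^{-1}R)^{-1}\|_\infty\le 2$, so $M=D(I+D^{-1}R)$ is invertible; the equation $M\vec\beta=\vec b$ has the unique solution $\vec\beta=(I+D^{-1}R)^{-1}D^{-1}\vec b$, which satisfies $\|\vec\beta\|_\infty\le 2\,\|D^{-1}\|_\infty\,\|\vec b\|_\infty\le C\,\|\vec b\|_\infty$ with $C$ independent of $\ve$, proving \equ{matrix}.

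Since the excerpt itself notes the argument is standard, I do not expect a genuine obstacle; the only point that must be handled with care is that $k=k_\ve\to\infty$ as $\ve\to 0$, so the row sums $\sum_{j\neq i}|M_{ij}|$ must be shown small \emph{uniformly in $k$}. This is exactly what the geometric decay in $|i-j|$ coming from the separation estimate \equ{equation101} delivers, and it is the reason why the inter-spike distances are forced to be at least of order $|\ve\ln\ve|$. Everything else is routine bookkeeping of the $O(\ve)$ corrections supplied by Proposition \ref{pro2}.
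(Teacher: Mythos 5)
Your proposal is correct and follows essentially the same route as the paper: both establish that $M$ is strictly diagonally dominant, with $M_{ii}=(1+o(1))\int_{\R^2_+}(\partial w/\partial y_1)^2\,dy\ge c_0>0$ and off-diagonal row sums $o(1)$ thanks to the exponential decay of $Z_i$ and the spike separation of order $|\ve\ln\ve|$, and then conclude invertibility with the uniform bound; the paper phrases the last step via the maximal component $|\beta_i|=\|\vec\beta\|_\infty$ while you use the equivalent Neumann-series formulation $M=D(I+D^{-1}R)$. Your explicit remark that the geometric decay in $|i-j|$ makes the row sums small uniformly in $k=k_\ve$ is exactly the point the paper uses implicitly when writing $\sum_{j\neq i}|M_{ij}|\le C\sum_{j\neq i}e^{-|P_i-P_j|/2}=o(1)$.
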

\begin{proof}
To prove the existence, it is sufficient to prove the a priori estimate (\ref{matrix}). Suppose that $|\beta_i|=\|\beta\|_\infty$, we have
\begin{equation*}
\sum_{i=1}^kM_{ij}\beta_j=b_i.
\end{equation*}
For the entries $M_{ij}$, from the definition of $\Lambda_k$, and the exponential decay property of $Z_i$, we know that
\begin{equation*}
M_{ii}=\int_{\Omega_\ve}Z_i^2dz=(1+o(1))\int_{\R^2_+}(\frac{\partial w}{\partial y_1})^2dy>c_0>0,
\end{equation*}
and
\begin{equation*}
\sum_{j\neq i}|M_{ij}|\leq C\sum_{j\neq i}e^{-\frac{|P_i-P_j|}{2}}=o(1).
\end{equation*}
Hence for $\ve$ small, we have
\begin{equation*}
c_0\|\vec{\beta}\|_\infty\leq c_0|\vec{\beta}_i|\leq \sum_{j\neq i}|M_{ij}||\vec{\beta}_j|+|b_i|\leq o(1)\|\vec{\beta}\|_\infty+\|\vec{b}\|_\infty
\end{equation*}
from which the desired result follows.
\end{proof}

Second we need the following a priori estimate

\begin{lemma} \label{p2}
Let $h\in L^2(\Omega_\ve)$ with $\| h \|_* $ bounded and
assume that $(\psi , \{c_i\} )$ is a solution to \equ{linear}.
Then there exist positive numbers $\ve_0$ and $C$, such that for all $\ve \leq \ve_0$, for any
points $P_i$, $i=1,\ldots , k$
given by \equ{points1} , one has
\begin{equation}
 \|\psi \|_{*} \le C \|h\|_*  .
\label{est1}\end{equation}
\end{lemma}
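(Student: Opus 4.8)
The plan is to argue by contradiction, following the standard blow-up scheme for Lyapunov--Schmidt reductions with multiple spikes. Suppose \equ{est1} fails; then there exist sequences $\ve=\ve_n\to 0$, points $P_i=P_i^{(n)}$ given by \equ{points1}, functions $h_n$ with $\|h_n\|_*\to 0$, constants $c_i^{(n)}$, and solutions $\psi_n$ of \equ{linear} with $\|\psi_n\|_*=1$. The first step is to dispose of the Lagrange multipliers: multiplying the equation in \equ{linear} by $Z_j$, integrating over $\Omega_\ve$, and using that $L_\ve(\psi_n)$ tested against $Z_j$ is small (because $\int \psi_n Z_j=0$ and $L_\ve Z_j$ is an $O(\ve)$ perturbation of the kernel element $L_0(\partial_{y_1}w)=0$, up to exponentially small tails from the other spikes), one gets $M_{jl}c_l^{(n)} = \langle h_n,Z_j\rangle + o(1)\|\psi_n\|_* + o(1)\|c^{(n)}\|_\infty$. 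Since $|\langle h_n,Z_j\rangle|\le C\|h_n\|_*$ and $\|\psi_n\|_*=1$, Lemma~\ref{lemma1} gives $\|c^{(n)}\|_\infty \le C\|h_n\|_* + o(1)$, hence $c_i^{(n)}\to 0$ (in fact $c_i^{(n)} = O(\|h_n\|_*)+o(1)$, with more care $o(1)$ suffices).

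The second step is the local analysis near each spike. Fix $j$ and set $\tilde\psi_n^{(j)}(y) := \psi_n(z)$ with $z = P_j + y$ in the straightened coordinates \equ{y}. Since $\|\psi_n\|_*=1$, by elliptic estimates $\tilde\psi_n^{(j)}$ is bounded in $C^{1,\alpha}_{loc}$, so along a subsequence it converges in $C^1_{loc}$ to some $\psi_\infty$ on $\R^2_+$ (or on $\R^2$ if the spike is interior, but here all spikes are on the boundary). Passing to the limit in the equation: $L_\ve(\psi_n)=h_n+\sum c_i^{(n)} Z_i$, the right-hand side tends to $0$ locally uniformly (as $\|h_n\|_*\to 0$, $c_i^{(n)}\to 0$, and away from $P_j$ the functions $Z_i$ decay), the potential $p(\sum \mathcal P w(z-P_i))^{p-1}\to p w^{p-1}(y)$ locally (the other spikes are at distance $\ge (1+o(1))|\ve\ln\ve|\cdot\ve^{-1}\to\infty$), and the Neumann condition on $\partial\Omega_\ve$ passes to the Neumann condition on $\partial\R^2_+$ since the rescaled boundary flattens. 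Hence $\psi_\infty\in H^2_N(\R^2_+)$ solves $L_0\psi_\infty=0$, and the orthogonality $\int\psi_n Z_j=0$ passes to the limit to give $\int_{\R^2_+}\psi_\infty\,\partial_{y_1}w=0$. By Lemma~\ref{lem1}, $Ker(L_0)\cap H^2_N(\R^2_+)=\mathrm{span}\{\partial_{y_1}w\}$, so $\psi_\infty=0$. This holds for every $j$, so $\psi_n\to 0$ in $C^1_{loc}$ around each spike.

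The third step converts this local decay into a contradiction with $\|\psi_n\|_*=1$ via a global barrier argument in the weighted norm. Away from all spikes the potential $p(\sum \mathcal P w(z-P_i))^{p-1}$ is small, so $L_\ve$ is essentially $\Delta-1$ there, which satisfies a maximum principle; one builds the comparison function $\bar\psi = \sigma\sum_i e^{-\mu|z-P_i|}$ for a small fixed $\sigma>0$ and checks, using $\mu<1$ and the smallness of the cross terms guaranteed by \equ{equation101} (the inter-spike distance is $\gtrsim|\ve\ln\ve|$ after rescaling), that $(\Delta-1)\bar\psi + p(\sum\mathcal Pw)^{p-1}\bar\psi \le -\tfrac12\bar\psi\cdot(\text{sum of exponentials})$ outside fixed neighborhoods of the $P_i$. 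Combined with the $L^\infty$ bound $|\psi_n|\le o(1)\sum e^{-\mu|z-P_i|}$ on the boundaries of those neighborhoods (from Step~2) and the smallness of $h_n+\sum c_i^{(n)}Z_i$ in $\|\cdot\|_*$, the maximum principle yields $\|\psi_n\|_* = o(1)$, contradicting $\|\psi_n\|_*=1$.

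The main obstacle is the third step: one has to choose the weight exponent $\mu\in(0,1)$ and the cut-off radii so that the barrier $\sum_i e^{-\mu|z-P_i|}$ is a genuine supersolution of $L_\ve$ in the exterior region \emph{uniformly in the large number $k=k_\ve$ of spikes}, despite the accumulation of many exponential tails. This is exactly where the separation estimate \equ{equation101}, $|s_i-s_{i-1}|\ge(1+o(1))|\ve\ln\ve|$, and the fact that $p>2$ (so that $w^{p-1}$ is integrable against the tails) are used; the summation $\sum_{j\ne i}e^{-\mu|P_i-P_j|}$ must be shown to be $o(1)$ geometrically, which it is since the points are spread along $\gamma$ with gaps bounded below. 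The rest is routine: the $L^\infty$-to-weighted-norm passage and the Fredholm alternative for existence (once the a priori bound \equ{est1} is in hand, existence follows because the problem is a compact perturbation of an invertible one, as in \cite{awz,lnw}).
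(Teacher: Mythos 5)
Your proposal is essentially the paper's own argument: the same contradiction setup, the same testing against $Z_j$ together with Lemma \ref{lemma1} to show $c_i\to 0$, the same barrier $W=\sum_i e^{-\mu|\cdot-P_i|}$ used as a supersolution outside fixed balls $B(P_j,R)$, and the same blow-up plus nondegeneracy step via Lemma \ref{lem1}; the paper simply runs the barrier estimate first (to locate a spike $P_i^{(n)}$ where the local sup norm stays bounded below) and then blows up there, whereas you blow up near each spike first and invoke the barrier afterwards. The one point you should make explicit in your ordering is uniformity in $j$ of the local decay, since $k=k_\ve\to\infty$ and the points $P_j^{(n)}$ move with $n$; this is repaired by blowing up at an index $j_n$ where the local sup norm is (nearly) maximal, which is precisely the paper's arrangement of the steps.
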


\begin{proof} We argue by contradiction. Assume there exist $\psi $ solution
to \equ{linear} and
$$
\| h \|_* \to 0, \quad \| \psi \|_{*} =1.
$$
We prove that
\begin{equation} \label{cazero}
c_i \to 0  \mbox{ for }i=1,\cdots,k.
\end{equation}
Multiply the equation in \equ{linear} against $Z_{j}$ and integrate in
$\Omega_\ve$, we get
$$
\int_{\Omega_\ve} L_\ve \psi Z_{j} (z)  = \int_{\Omega_\ve}  h Z_{j}  + M(c_{j}) ,
$$
By the exponentially decay of $Z_i$, we first know that
$$
|\int_{\Omega_\ve}  h Z_j | \leq C \| h \|_* .
$$
Here and in what follows, $C$ stands for a positive constant
independent of $\ve$, as $\ve \to 0$ .  Secondly, by the equation satisfied by $P_{\Omega_{\ve,P_i}}w(z-P_i)$,
\begin{eqnarray*}
&&\int_{\Omega_\ve}L_\ve\psi Z_idz=\int_{\Omega_\ve}(\Delta \psi-\psi+p(\sum_{i=1}^kP_{\Omega_\ve}w(z-P_i))^{p-1}\psi)Z_idz\\
&&=\int_{\Omega_\ve}(\Delta Z_i-Z_i+p(\sum_{i=-k}^kP_{\Omega_{\ve,P_i}}w(z-P_i))^{p-1}Z_i)\psi dz\\
&&=\int_{\Omega_\ve}p(\sum_{i=1}^k[P_{\Omega_{\ve,P_i}}w(z-P_i))^{p-1}\frac{\partial P_{\Omega_{\ve,P_i}}w(z-P_i)}{\partial \tau}-pw(z-P_i)^{p-1}\frac{\partial w(z-P_i)}{\partial \tau}]\psi dz\\
&&\leq C\int_{B_{|\ln\ve|}(P_i)}|\frac{\partial w(z-P_i)}{\partial \tau}||O(\ve)+w(z-P_i)^{p-2}\sum_{j\neq i}P_{\Omega_{\ve,P_i}}w(z-P_j)||\psi|dz\\
&&+\int_{\Omega_\ve/B_{|\ln\ve|}(P_i)}\frac{\partial |w(z-p_i)}{\partial \tau}|[\sum_{j=1}^k w_j^{p-1}+O(\ve)\sum_{j=1}^ke^{-\mu|z-P_j|}]|\psi|dz\\
&&\leq C\|\psi\|_*(O(\ve)+O(\ve^{\frac{p-\eta}{2}}))\\
&&\leq C\ve\|\psi\|_*
\end{eqnarray*}
if we choose $\eta$ small enough such that $p-\eta>2$. This can be done since $p>2$.

Since $M$ is invertible and $\|M^{-1}\|\leq C$, we get that
\begin{equation}
|c_i|\leq C(\|h\|_*+O(\ve)\|\psi\|_*).
\end{equation}

Thus we get the validity of \equ{cazero}, since we are assuming $\|
\psi \|_{*} =1$ and $ \| h \|_* \to 0$.

\medskip
Let now $\mu \in (0,1)$. It is easy to check that the function
\[
W : =  \sum_{i=-k}^{k} e^{-\mu \, |\cdot - P_i|}  ,
\]
satisfies
\[
L_\ve\, W  \leq  \frac{1}{2} \,  ( \mu^2 -1) \, W \, ,
\]
in $\Omega_\ve \setminus \cup_{j=1, \ldots , k} B(P_j , R)$ provided $R$ is fixed large enough (independently of $\ve$).  Hence the function $W$ can be used as a barrier to prove the pointwise estimate
\begin{equation}
|\phi | (x)  \leq  C \, \left( \|  L_\ve  \, \psi \|_*  + \sum_{j}  \|×\psi\|_{L^\infty(B(p_j, R)\cap \Omega_\ve)} \right) \,   W (x) \, ,
\label{eq:fp2}
\end{equation}
for all $z \in \Omega_\ve \setminus \cup_{j} B(P_j , R)$.

\medskip

Granted these preliminary estimates, the proof  of the result goes by contradiction. Let us assume there exist  a sequence of $\ve\to 0$ and a sequence of solutions of  \equ{linear}  for which the inequality is not true. The problem being linear, we can reduce to the case where we have a sequence $\ve^{(n)}$ tending to $0$ and sequences $ h^{(n)}$, $\psi^{(n)}, c^{(n)}$ such that
$$
\| h^{(n)} \|_* \to 0, \quad \mbox{and} \quad \| \psi^{(n)} \|_{*} =1.
$$
But  \equ{cazero} implies that we also have
\[
\| c^{(n)} \|_* \to 0 \, .
\]
Then \equ{eq:fp2} implies that there exists $P_i^{(n)} $ such that
\begin{equation}\label{eqfp3}
\|\psi^{(n)} \|_{L^\infty (B(P_i^{(n)} ,R))}\geq C ,
\end{equation}
for some fixed constant $C>0$. Using elliptic estimates together with Ascoli-Arzela's theorem, we can find a sequence $P_i^{(n)}$ and we can extract, from the sequence $\psi_i^{(n)} (\cdot -P_i^{(n)})$ a subsequence which will converge (on compact) to $\psi_\infty$ a solution of
\[
\left(\Delta - 1 + p \, w^{p-1} \right) \, \psi_\infty =0 \, ,
\]
in $\R^2_+$, which is bounded by a constant times $e^{-\mu \, |x|}$, with $\mu >0$. Moreover, since $\psi_i^{(n)}$ satisfies the orthogonality conditions in \equ{linear},  the limit function $\psi_\infty$ also satisfies
\[
\int_{\R^2_+} \psi_\infty \,  \frac{\partial w}{\partial y_1} \, dx =0 \, .
\]
But the solution $w$ being non-degenerate, this implies that $\psi_\infty \equiv 0$, which is certainly in contradiction with \equ{eqfp3} which implies that $\psi_\infty$ is not identically equal to $0$.
\medskip

Having reached a contradiction, this completes the proof of the Lemma.

\end{proof}

\medskip
We can now prove Proposition \ref{p1}.

\medskip
\noindent {\it Proof of Proposition \ref{p1}.} Consider the space
$$
{\mathcal H} = \{ u \in H_N^2 (\Omega_\ve ) \, : \, \int_{\Omega_\ve} u Z_i = 0 , \quad  i=1, \ldots , k
\}.
$$
Notice that the problem \equ{linear} in $\psi $ gets re-written as
\begin{equation}\label{lp7}
\psi + K (\psi ) = \bar h \quad {\mbox{in}} \quad {\mathcal H}
\end{equation}
where $\bar h$ is defined by duality and $K: {\mathcal H} \to
{\mathcal H}$ is a linear compact operator. Using Fredholm's
alternative, showing that equation \equ{lp7} has a unique solution
for each $\bar h$ is equivalent to showing that the equation has a
unique solution for $\bar h = 0$, which in turn follows from
Proposition \ref{p2}. The estimate \equ{est} follows directly from
Proposition \ref{p2}. This concludes the proof of Proposition
\equ{p1}.

\end{document}